\documentclass[english,3p]{elsarticle}
\usepackage[T1]{fontenc}
\usepackage[utf8]{luainputenc}
\usepackage{float}
\usepackage{amsthm}
\usepackage{amsmath}
\usepackage{amssymb}
\usepackage{graphicx}
\usepackage{setspace}
\usepackage{esint}

\makeatletter

\providecommand{\tabularnewline}{\\}

\theoremstyle{plain}
\newtheorem{thm}{\protect\theoremname}
  \theoremstyle{remark}
  \newtheorem{rem}[thm]{\protect\remarkname}

\makeatother

\usepackage{babel}
  \providecommand{\remarkname}{Remark}
\providecommand{\theoremname}{Theorem}

\begin{document}

\title{On the Gross-Pitaevskii equation with pumping and decay: stationary
states and their stability}

\author[rvt]{Jesús~Sierra}

\ead{jesus.sierra@kaust.edu.sa}

\author[rvt]{Aslan Kasimov\corref{cor1}}

\ead{aslan.kasimov@kaust.edu.sa}

\author[rvt]{Peter~Markowich}

\ead{peter.markowich@kaust.edu.sa}

\author[focal]{Rada-Maria Weishäupl}

\ead{rada.weishaeupl@univie.ac.at}

\cortext[cor1]{Corresponding author}

\address[rvt]{King Abdullah University of Science and Technology, Box 4700, Thuwal
23955-6900, Saudi Arabia}

\address[focal]{Faculty of Mathematics, Vienna University, Oskar-Morgenstern-Platz
1, 1090 Wien, Austria}
\begin{abstract}
We investigate the behavior of solutions of the complex Gross-Pitaevskii
equation, a model that describes the dynamics of pumped decaying Bose-Einstein
condensates. The stationary radially symmetric solutions of the equation
are studied and their linear stability with respect to two-dimensional
perturbations is analyzed. Using numerical continuation, we calculate
not only the ground state of the system, but also a number of excited
states. Accurate numerical integration is employed to study the general
nonlinear evolution of the system from the unstable stationary solutions
to the formation of stable vortex patterns. \end{abstract}
\begin{keyword}
Complex Gross-Pitaevskii equation \sep Numerical continuation \sep
Collocation method \sep Bose-Einstein condensate 
\end{keyword}
\maketitle

\section{Introduction}

In this paper, we explore numerically the behavior of solutions of
the complex Gross-Pitaevskii (GP) equation:

\begin{equation}
\begin{array}{lll}
i\psi_{t}=-\Delta\psi+V\left(x\right)\psi+\left|\psi\right|^{2}\psi+i\left[\omega\left(x\right)-\sigma\left|\psi\right|^{2}\right]\psi, & t>0, & x\in D=\mathbb{R}^{2},\\
\psi\left(x,0\right)=\psi_{0}\left(x\right),\: x\in D,
\end{array}\label{eq:cGPE}
\end{equation}
where $\psi=\psi\left(x,t\right)$ represents the wave function, $V\left(x\right)$
is the trapping potential, $\omega=\omega\left(x\right)\geq0$ is
the pumping term, and $\sigma>0$ is the strength of the decaying
term. Equation (\ref{eq:cGPE}) was proposed by \citet{keeling2008spontaneous}
to study pumped decaying condensates, particularly the Bose-Einstein
condensates (BEC) of exciton-polaritons. 

We use accurate numerical techniques to investigate the nature of
the radially symmetric stationary solutions of (\ref{eq:cGPE}), their
linear stability properties, and the long-time nonlinear evolution
of the solutions of (\ref{eq:cGPE}) that start with the unstable
stationary states as initial conditions. The stationary radially symmetric
solutions of the complex GP equation are computed by using a numerical
collocation method. Such stationary solutions are found to be linearly
unstable with respect to two-dimensional perturbations when the pumping
region is small as well as when it is large.  Linearly stable solutions
are used as a starting point in a numerical continuation method, wherein
the stationary solutions are computed when a parameter in the equation
is varied. A number of different stationary solutions are found at
any given set of parameters. Among these solutions, the linearly stable
one is seen to have the smallest chemical potential. Such solution
is denoted as the ground state, while the other solutions are termed
the excited states. With the stationary solutions and their linear
stability properties known, we then implement a time-splitting spectral
method to solve (\ref{eq:cGPE}) and explore the nonlinear dynamics
of the solutions.

The GP equation and its variants are widely used to understand BEC
in various systems. The possibility of condensation of bosons was
predicted by \citet{bose1924plancks} and \citet{einstein1924BEC,einstein1925quantum}
in 1924-1925. The condensate was obtained experimentally for the first
time in 1995 \citep{anderson1995observation,bradley1995evidence,davis1995physical}
in a system consisting of about half a million alkali atoms cooled
down to nanokelvin-level temperatures. The principal interest in BEC
lies in its nature as a macroscopic quantum system. Some of the dynamics
of atomic BEC have been successfully described by the GP equation
\citep{gross1963hydrodynamics,pitaevskii1961vortex,pitaevskii2003bose},
a nonlinear Schrödinger equation, 
\begin{equation}
i\hbar\psi_{t}=-\frac{\hbar^{2}}{2m}\triangle\psi+V\left(\mathbf{x}\right)\psi+\delta\left|\psi\right|^{2}\psi,\label{eq:GPE}
\end{equation}
derived from the mean field theory of weakly interacting bosons. Here,
$\psi=\psi(\mathbf{x},t)$ is the wave function of the condensate,
$\delta$ is a constant characterizing the strength of the boson-boson
interactions, $m$ is the mass of the particles, and $V(\mathbf{x})$
is the trapping potential.

\begin{figure}[H]
\label{cavity} 

\begin{centering}
\includegraphics[scale=0.3,angle=-90]{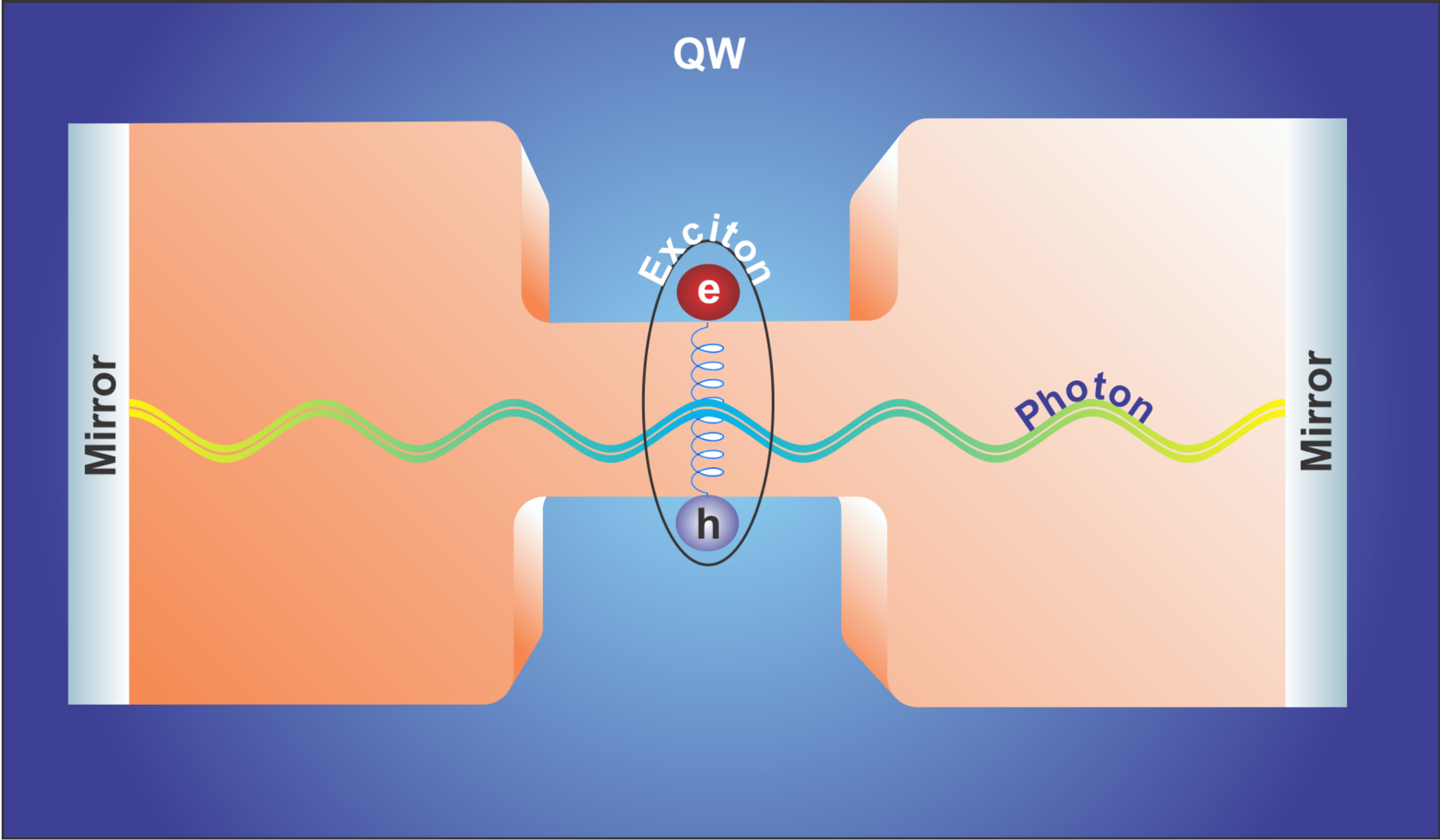} 
\par\end{centering}

\caption{Schematics of the exciton-polariton microcavity.}
\end{figure}

A serious obstacle in the study of BEC in atomic systems is the extremely
low temperatures required to create the condensate. Because of this
difficulty, other, non-atomic systems are being explored that can
undergo condensation at higher temperatures. One possible candidate
is a system of exciton-polaritons, which are quasi-particles that
can be created in semi-conductor cavities as a result of interaction
between excitons and a laser field in the cavity \citep{kasprzak2006bose,coldren2012diode}.
A two-dimensional quantum structure consisting of coupled quantum
wells embedded in an optical microcavity is used (see Fig. \ref{cavity}).
Excitons are electron-hole pairs, produced in the coupled quantum
wells, that interact with the photons trapped inside the optical cavity
by means of two highly reflective mirrors. Due to this confinement,
the effective mass of the polaritons is very small: $10^{-4}$ times
the free electron mass \citep{kasprzak2006bose}. Since the temperature
of condensation is inversely proportional to the mass of the particles,
the exciton-polariton systems afford relatively high temperatures
of condensation. There are, however, two drawbacks in this new condensate:
the polaritons are highly unstable and exhibit strong interactions.
The excitons disappear with the recombination of the electron-hole
pairs through emission of photons. One way to deal with this problem
is to introduce a polariton reservoir: polaritons are ``cooled''
and ``pumped'' from this reservoir into the condensate to compensate
for the decay. At the same time, a low density level is kept in order
to reduce the interactions between polaritons. For a detailed study
of this system, see, e.g., \citep{bramati2013physics,borgh2012robustness,keeling2011exciton}.

Various mathematical models have been proposed for this new condensate
\citep{keeling2008spontaneous,d2010persistent,wouters2007excitations}.
One of them, called complex GP equation \citep{keeling2008spontaneous},
is explored in this paper. The complex GP equation reflects the non-equilibrium
dynamics described above by adding pumping and decaying terms to the
GP equation. In \citep{keeling2008spontaneous}, the authors observed
the formation of two-dimensional vortex lattices. Linear stability
of stationary solutions and the formation of dark solitons in the
one-dimensional complex GP equation were analyzed by \citet{cuevas2011nonlinear}.
The role of damping in the absence of the pumping term in the GP equation
is studied by, e.g., \citet{bao2004three} and \citet{antonelli2013nonlinear}. 

The remainder of the paper is organized as follows. In Section 2,
the stationary radial solutions of the complex GP equation are calculated.
In Section 3, a numerical collocation method is used to compute the
solutions described in Section 2. In Section 4, the linear stability
of the stationary states is analyzed. In Section 5, the numerical
continuation of one of the linearly stable solutions is carried out,
showing some of the excited states. In Section 6, the full complex
GP equation is solved numerically by a second order time-splitting
spectral method. Conclusions are drawn in Section 7.

\section{Stationary radial solutions}

Let us analyze solutions of (\ref{eq:cGPE}) of the form $\psi\left(x,t\right)=\exp\left(-i\mu t\right)\phi\left(x\right)$,
where $\mu$ corresponds to the real-valued chemical potential and
$\phi$ represents the amplitude function that vanishes as $\left|x\right|\rightarrow\infty$.
Introducing this ansatz into (\ref{eq:cGPE}) leads to the following
equation for $\phi$,

\begin{equation}
\mu\phi\left(x\right)=\left[-\Delta+V\left(x\right)+\left|\phi\left(x\right)\right|^{2}+i\left(\omega\left(x\right)-\sigma\left|\phi\left(x\right)\right|^{2}\right)\right]\phi\left(x\right).\label{eq:st_cGPE}
\end{equation}
Multiplying (\ref{eq:st_cGPE}) by $\phi^{*}$ (the complex conjugate
of $\phi$) and integrating over $D=\mathbb{R}^{2}$ yields

\begin{equation}
\mu\int_{D}\left|\phi\left(x\right)\right|^{2}dx=\int_{D}\left[\left|\nabla\phi\left(x\right)\right|^{2}+V\left(x\right)\left|\phi\left(x\right)\right|^{2}+\left|\phi\left(x\right)\right|^{4}+i\left(\omega\left(x\right)-\sigma\left|\phi\left(x\right)\right|^{2}\right)\left|\phi\left(x\right)\right|^{2}\right]dx.\label{eq:eq_mu}
\end{equation}
It follows immediately that, for $\mu$ to be real, it must be required
that

\begin{equation}
\int_{D}\left(\omega\left(x\right)-\sigma\left|\phi\left(x\right)\right|^{2}\right)\left|\phi\left(x\right)\right|^{2}dx=0.\label{eq:cond}
\end{equation}
Note that allowing $\mathrm{Im}(\mu)\neq0$ would lead to exponential
growth ($\mathrm{Im}\left(\mu\right)>0$) or exponential decay to
zero ($\mathrm{Im}\left(\mu\right)<0$) of the solution, since in
this case $\psi\left(x,t\right)=\exp\left(-i\mathrm{Re}\left(\mu\right)t\right)\exp\left(\mathrm{Im}\left(\mu\right)t\right)\phi\left(x\right)$.

An alternative approach to study the stationary solutions of (\ref{eq:cGPE})
is given by its quantum hydrodynamic version. For this, define $S=S\left(x,t\right)$
as the phase of $\psi$ and $\rho=\rho\left(x,t\right)=\left|\psi\left(x,t\right)\right|^{2}$.
Hence, inserting the ansatz $\psi=\sqrt{\rho}\exp\left(iS\right)$
into (\ref{eq:cGPE}) and separating real and imaginary parts yields
\begin{align}
\frac{1}{2}\rho_{t}+\mathrm{div}J-\left(\omega\left(x\right)-\sigma\rho\right)\rho= & 0,\label{eq:Mad_1}\\
S_{t}+\left|\nabla S\right|^{2}+V-\frac{\triangle\sqrt{\rho}}{\sqrt{\rho}}+\rho= & 0,\label{eq:Mad_2}
\end{align}
where $J=J\left(x,t\right)=\mathrm{Im}\left(\psi^{*}\nabla\psi\right)$
is the current density. Taking the gradient of (\ref{eq:Mad_2}),
multiplying by $\rho$, and using (\ref{eq:Mad_1}) and $\mathrm{div}\left(J\otimes J/\rho\right)=\frac{1}{2}\rho\nabla\left|\nabla S\right|^{2}+\nabla S\cdot\mathrm{div}J$
gives the following equivalent expression of (\ref{eq:Mad_2}):

\begin{equation}
J_{t}+2\mathrm{div}\left(\frac{J\otimes J}{\rho}\right)+\rho\nabla V-\rho\nabla\left(\frac{\triangle\sqrt{\rho}}{\sqrt{\rho}}\right)+\frac{1}{2}\nabla\rho^{2}=2\left(\omega\left(x\right)-\sigma\rho\right)J.\label{eq:Mad-3}
\end{equation}
\citet{gasser1997quantum} showed that each of the terms in (\ref{eq:Mad-3}),
in particular, the nonlinear terms with singularities in the vacuum
state ($\rho\equiv0$), are well defined in the sense of distributions
if finite kinetic energy solutions are considered, i.e., 

\[
\int_{D}\left|\nabla\psi\right|^{2}dx<\infty.
\]
Using this result, the next theorem can be deduced from (\ref{eq:Mad_1})
and (\ref{eq:Mad-3}).
\begin{thm}
Let $\omega=\omega\left(x\right)\geq0$ be smooth with $\left\{ \left.x\right|\omega\left(x\right)>0\right\} $
simply connected. Then there exists a stationary solution of (\ref{eq:Mad_1})-(\ref{eq:Mad-3})
with $J\equiv0$ if and only if either

1. $\rho\equiv0$, or

2. $\rho=\frac{\omega\left(x\right)}{\sigma}$. In this case, $V(x)=C+\frac{\triangle\sqrt{\omega\left(x\right)}}{\sqrt{\omega\left(x\right)}}-\frac{1}{\sigma}\omega\left(x\right)$
in $\left\{ \left.x\right|\omega\left(x\right)>0\right\} $, where
$C$ is a constant.\end{thm}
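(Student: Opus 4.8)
The plan is to use that for a stationary solution both time derivatives vanish, so that the hydrodynamic system (\ref{eq:Mad_1})--(\ref{eq:Mad-3}) decouples into a pointwise algebraic constraint and a gradient identity, which together pin down $\rho$ and then $V$.

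First I would impose $\rho_{t}\equiv0$, $J_{t}\equiv0$ and $J\equiv0$ in (\ref{eq:Mad_1}). Every flux term drops and what survives is the pointwise relation
\[
\left(\omega(x)-\sigma\rho(x)\right)\rho(x)=0\qquad\text{for all }x\in D,
\]
so that at each point $\rho(x)=0$ or $\rho(x)=\omega(x)/\sigma$. Where $\omega=0$ this forces $\rho=0$, hence on the complement of $U:=\left\{x\mid\omega(x)>0\right\}$ one already has $\rho=0=\omega/\sigma$. On the open, connected set $U$ the two branches $\rho=0$ and $\rho=\omega/\sigma>0$ are disjoint, so by continuity of $\rho$ the sets $\left\{x\in U\mid\rho(x)=0\right\}$ and $\left\{x\in U\mid\rho(x)=\omega(x)/\sigma\right\}$ are both relatively closed and partition $U$; connectedness forces one of them to be empty. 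This yields exactly the dichotomy of the theorem: either $\rho\equiv0$ on $D$ (alternative 1), or $\rho=\omega/\sigma$ on $D$ (alternative 2), the continuity of $\rho$ being supplied by the finite-kinetic-energy regularity underlying the Gasser--Markowich framework.

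For alternative 2 I would then substitute $\rho=\omega/\sigma$, $J\equiv0$, $J_{t}\equiv0$ into the stationary form of (\ref{eq:Mad-3}), which collapses to
\[
\rho\,\nabla V-\rho\,\nabla\!\left(\frac{\triangle\sqrt{\rho}}{\sqrt{\rho}}\right)+\frac{1}{2}\nabla\rho^{2}=0 .
\]
On $U$ one has $\rho>0$, so dividing by $\rho$ and using $\frac{1}{2\rho}\nabla\rho^{2}=\nabla\rho$ turns this into $\nabla\bigl(V-\triangle\sqrt{\rho}/\sqrt{\rho}+\rho\bigr)=0$, and connectedness of $U$ gives $V-\triangle\sqrt{\rho}/\sqrt{\rho}+\rho=C$ for some constant $C$. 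Since $\sqrt{\rho}=\sqrt{\omega}/\sqrt{\sigma}$, the factor $1/\sqrt{\sigma}$ cancels in $\triangle\sqrt{\rho}/\sqrt{\rho}=\triangle\sqrt{\omega}/\sqrt{\omega}$, and with $\rho=\omega/\sigma$ this is precisely $V=C+\triangle\sqrt{\omega}/\sqrt{\omega}-\omega/\sigma$ on $U$. The converse is immediate: $\rho\equiv0$ is the trivial solution, and for $\rho=\omega/\sigma$ with $V$ of the stated form one simply multiplies $\nabla\bigl(V-\triangle\sqrt{\rho}/\sqrt{\rho}+\rho\bigr)=0$ by $\rho$ and verifies (\ref{eq:Mad_1}) and (\ref{eq:Mad-3}) directly.

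The main difficulty is regularity, not algebra: promoting the pointwise dichotomy to a global one, dividing (\ref{eq:Mad-3}) by $\rho$, and integrating the gradient identity all require $\rho$ to be smooth enough up to and across $\partial U$ and controlled on the vacuum set $\left\{\rho=0\right\}$; here one leans on the result of \citet{gasser1997quantum} that the singular nonlinear terms of (\ref{eq:Mad-3}) are well defined as distributions for finite-kinetic-energy solutions, so the manipulations above are legitimate. A secondary point to watch is that $\triangle\sqrt{\omega}/\sqrt{\omega}$ may itself be singular at the zeros of $\omega$, which is exactly why the formula for $V$ is asserted only on $U=\left\{\omega>0\right\}$ and nothing is claimed on its complement.
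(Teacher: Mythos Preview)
Your argument is correct and follows the same route as the paper: set $\rho_t=J_t=0$, $J\equiv0$, read off the pointwise constraint from (\ref{eq:Mad_1}), then substitute into (\ref{eq:Mad-3}) and integrate the resulting gradient identity on the connected set $\{\omega>0\}$. You actually supply more than the paper does, since you justify the passage from the pointwise alternative $(\omega-\sigma\rho)\rho=0$ to the global dichotomy via connectedness of $U$, a step the paper leaves implicit. The only difference worth noting is in the converse: the paper appeals to the phase equation (\ref{eq:Mad_2}) to conclude $|\nabla S|^2=0$ and hence $J=\rho\nabla S\equiv0$, whereas you reverse the forward computation and verify (\ref{eq:Mad_1}) and (\ref{eq:Mad-3}) directly; both are equally valid and equally short.
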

\begin{proof}
Set $\rho_{t}\equiv0$, $J_{t}\equiv0$, and $J\equiv0$ in (\ref{eq:Mad_1})
and (\ref{eq:Mad-3}). Then, statement 1 is a trivial solution of
the system. For statement 2, notice that (\ref{eq:Mad_1}) yields
$\rho=\omega\left(x\right)/\sigma$. Inserting this expression into
(\ref{eq:Mad-3}) gives

\[
\begin{array}{cc}
\nabla\left(V\left(x\right)-\frac{\triangle\sqrt{\omega\left(x\right)}}{\sqrt{\omega\left(x\right)}}+\frac{1}{\sigma}\omega\right)=0, & \mathrm{in\:}\left\{ \left.x\right|\omega\left(x\right)>0\right\} \end{array}.
\]
Hence,

\begin{equation}
\begin{array}{cc}
V\left(x\right)=C+\frac{\triangle\sqrt{\omega\left(x\right)}}{\sqrt{\omega\left(x\right)}}-\frac{1}{\sigma}\omega, & \mathrm{in\:}\left\{ \left.x\right|\omega\left(x\right)>0\right\} ,\end{array}\label{eq:pot_car_c2}
\end{equation}
where $C$ is a constant. 

Now, if statement 1 is true, then $J\equiv0$ follows immediately.
On the other hand, if statement 2 is valid, then (\ref{eq:Mad_2})
gives $\left|\nabla S\right|^{2}=0$, which implies $\nabla S=0$.
Thus, $J\left(=\rho\nabla S\right)\equiv0$ .\end{proof}
\begin{rem}
The stationary GP equation, 

\begin{equation}
\begin{array}{cc}
V(x)=\mu+\frac{\triangle\phi}{\phi}-\frac{1}{\sigma}\left|\phi\right|^{2}, & \phi\neq0,\end{array}\label{eq:st_GP}
\end{equation}
has exactly the form of (\ref{eq:pot_car_c2}), with $C=\mu$ (the
chemical potential). Then, letting $\omega\left(x\right)=\left|\phi\right|^{2}$
(the density profile of any solution of (\ref{eq:st_GP})), the solution
of the corresponding complex GP equation is given by $\rho=\left|\phi\right|^{2}/\sigma$
with $J\equiv0$.
\end{rem}
Now, going back to (\ref{eq:Mad_1}), and integrating in space gives

\[
\frac{1}{2}\frac{dM}{dt}=\int_{D}\left(\omega\left(x\right)-\sigma\rho\right)\rho dx,
\]
where

\[
M\equiv\int_{D}\rho dx
\]
is the $mass$ of the system. Therefore, we can see that equation
(\ref{eq:st_cGPE}) along with condition (\ref{eq:cond}) lead to
solutions where the pumping and decaying parts equilibrate, i.e.,
the time derivative of the mass of the system is zero. In addition,
condition (\ref{eq:cond}) ensures that the chemical potential is
a nonnegative real quantity. 

For the rest of the paper, $V(x)=\left|x\right|^{2}$ is the harmonic
potential and $\omega\left(x\right)=\alpha\Theta\left(R-\left|x\right|\right)$,
where $\alpha>0$ is the strength of the pumping term and $R>0$ is
the radius of the pumping region delimited by the smoothed Heaviside
function $\Theta\left(x\right)=\left(1+\tanh\left(\kappa x\right)\right)/2$
for some fixed parameter $\kappa>0$. 

In the following we investigate the stationary radially symmetric
solutions of (\ref{eq:st_cGPE}). In this case, (\ref{eq:st_cGPE})
becomes $(\begin{array}{cc}
'=\frac{d}{dr}, & \Theta_{R}=\Theta\left(R-r\right)\end{array})$

\begin{equation}
\mu\phi=-\phi''-\frac{1}{r}\phi'+r^{2}\phi+\left|\phi\right|^{2}\phi+i\left(\alpha\Theta_{R}-\sigma\left|\phi\right|^{2}\right)\phi,\label{eq:rad_eigen}
\end{equation}
where $r=\left|x\right|\geq0$, with the boundary conditions $\phi\left(r\right)=0$
as $r\rightarrow\infty$ and $\phi'\left(0\right)=0$. Condition (\ref{eq:cond})
is written as

\begin{equation}
\intop_{0}^{\infty}\left(\alpha\Theta_{R}-\sigma\left|\phi\left(r\right)\right|^{2}\right)\left|\phi\left(r\right)\right|^{2}rdr=0.\label{eq:rad_cond}
\end{equation}
By defining 

\begin{equation}
\xi\left(r\right)=\int_{0}^{r}\left(\alpha\Theta_{R}-\sigma\left|\phi\left(s\right)\right|^{2}\right)\left|\phi\left(s\right)\right|^{2}sds,\label{eq:cond_eq}
\end{equation}
condition (\ref{eq:rad_cond}) can be expressed as

\begin{equation}
\xi'\left(r\right)=\left(\alpha\Theta_{R}-\sigma\left|\phi\left(r\right)\right|^{2}\right)\left|\phi\left(r\right)\right|^{2}r,\label{eq:cond_ODE}
\end{equation}
with

\begin{equation}
\xi\left(\infty\right)=0.\label{eq:cond_BC}
\end{equation}
The variable $\xi$ is convenient for subsequent numerical integration.
The constancy of the chemical potential $\mu$ can be written as

\begin{equation}
\mu'\left(r\right)=0.\label{eq:Eq_mi}
\end{equation}
Because the phase of $\phi$ is arbitrary, we let 

\begin{equation}
\mathrm{Im}\left(\phi\left(0\right)\right)=0.\label{eq:Phase_cond}
\end{equation}

Notice that the second order ordinary differential equation (ODE)
(\ref{eq:rad_eigen}) can be expanded as a system of two first-order
ODE. Due to the harmonic trapping potential and the finite pumping
region, the solution is expected to be concentrated on a bounded domain.
This fact is used in the numerical computations, where the integration
domain is chosen as $D=\left[0,b\right]$ for large enough $b$. Then,
the right boundary conditions, $\phi\left(\infty\right)=0$ and $\xi\left(\infty\right)=0$,
are replaced by $\phi\left(b\right)=0$ and $\xi\left(b\right)=0$,
respectively. As a result, the following set of ODE is obtained for
$r\in\left[0,b\right]$:

\begin{equation}
\begin{array}{ccl}
\phi' & = & \varphi,\\
\varphi' & = & -\frac{1}{r}\varphi-\mu\phi+r^{2}\phi+\left|\phi\right|^{2}\phi+i\left(\alpha\Theta_{R}-\sigma\left|\phi\right|^{2}\right)\phi,\\
\xi' & = & \left(\alpha\Theta_{R}-\sigma\left|\phi\left(r\right)\right|^{2}\right)\left|\phi\left(r\right)\right|^{2}r,\\
\mu' & = & 0,
\end{array}\label{eq:Sys_ODE}
\end{equation}
with

\[
\begin{array}{cccc}
\phi\left(b\right)=0, & \varphi\left(0\right)=0, & \mathrm{Im}\left(\phi\left(0\right)\right)=0, & \xi\left(b\right)=0.\end{array}
\]
This nonlinear boundary value problem with a singularity of the first
kind (see, e.g., \citep{de1976difference}) is consistent with respect
to the number of boundary conditions. The value of $\varphi'$ at
$r=0$ can be determined using 

\begin{equation}
\underset{r\rightarrow0}{\lim}\frac{\varphi\left(r\right)-\varphi\left(0\right)}{r-0}=\underset{r\rightarrow0}{\lim}\frac{\varphi\left(r\right)}{r}=\varphi_{r}\left(0\right),\label{eq:lim}
\end{equation}
due to the boundary condition $\varphi\left(0\right)=0$. Then, the
second equation in (\ref{eq:Sys_ODE}) gives

\begin{equation}
\varphi'\left(0\right)=\frac{1}{2}\left[-\mu\phi\left(0\right)+\left|\phi\left(0\right)\right|^{2}\phi\left(0\right)+i\left(\alpha-\sigma\left|\phi\left(0\right)\right|^{2}\right)\phi\left(0\right)\right].\label{eq:val_at_zero}
\end{equation}

It is convenient to make (\ref{eq:Sys_ODE}) real valued, autonomous,
and restricted to the interval $\left[0,1\right]$. Substituting $\phi=\theta+i\eta$
into (\ref{eq:rad_eigen}), separating real and imaginary parts, and
writing the resulting equations as a system of first order ODE, leads
to a real valued system. In order to make (\ref{eq:Sys_ODE}) autonomous,
define $\varrho(r)=r$ such that $\varrho'(r)=1$ and $\varrho\left(0\right)=0$
are added to the system. Finally, rescaling $r$ as $\tilde{r}=\frac{r}{b}$,
restricts the domain to the interval $[0,1]$. Thus, (\ref{eq:Sys_ODE})
becomes ($'=\frac{d}{d\tilde{r}}$):

\begin{doublespace}
\begin{equation}
\begin{array}{ccl}
\frac{1}{b}\theta' & = & \varphi,\\
\frac{1}{b}\varphi' & = & -\frac{1}{\varrho}\varphi-\mu\theta+\varrho^{2}\theta+\left(\theta^{2}+\eta^{2}\right)\theta-\left(\alpha\Theta\left(R-\varrho\right)-\sigma\left(\theta^{2}+\eta^{2}\right)\right)\eta,\\
\frac{1}{b}\eta' & = & \zeta,\\
\frac{1}{b}\zeta' & = & -\frac{1}{\varrho}\zeta-\mu\eta+\varrho^{2}\eta+\left(\theta^{2}+\eta^{2}\right)\eta+\left(\alpha\Theta\left(R-\varrho\right)-\sigma\left(\theta^{2}+\eta^{2}\right)\right)\theta,\\
\frac{1}{b}\xi' & = & \left(\alpha\Theta\left(R-\varrho\right)-\sigma\left(\theta^{2}+\eta^{2}\right)\right)\left(\theta^{2}+\eta^{2}\right)\varrho,\\
\frac{1}{b}\mu' & = & 0,\\
\frac{1}{b}\varrho' & = & 1,
\end{array}\label{eq:Sys_2}
\end{equation}
with
\end{doublespace}

\[
\begin{array}{ccccccc}
\varphi\left(0\right)=0, & \theta\left(1\right)=0, & \zeta\left(0\right)=0, & \eta\left(0\right)=0, & \eta\left(1\right)=0, & \xi\left(1\right)=0, & \varrho\left(0\right)=0.\end{array}
\]

\section{Collocation method}

System (\ref{eq:Sys_2}) can be solved with high accuracy by using
a collocation method. The basic idea involves forming an approximate
solution as a linear combination of a set of functions (usually polynomials),
the coefficients of which are determined by requiring the combination
to satisfy the system at certain points (the collocation points) as
well as the boundary conditions.

To introduce the algorithm, consider the following first-order system
of ODE:

\begin{equation}
\begin{array}{ccc}
u'\left(r\right)=f\left(u\left(r\right)\right), & r\in I=\left[0,1\right], & u\left(\cdot\right),f\left(\cdot\right)\in\mathbb{R}^{d}\end{array},\label{eq:col_ODE}
\end{equation}
subject to the boundary conditions

\[
\begin{array}{cc}
b\left(u\left(0\right),u\left(1\right)\right)=0, & b\left(\cdot\right)\in\mathbb{R}^{d}.\end{array}
\]
Let $I_{h}=\left\{ r_{n}:0=r_{0}<r_{1}<\cdots<r_{N}=1\right\} $ be
a given mesh on $I$, with $h_{n}=r_{n}-r_{n-1}$ $\left(n=1,\ldots,N\right)$.
Define the space of vector-valued piecewise polynomials, $\mathcal{P}^{m}\left(I_{h}\right)$,
as

\[
\mathcal{P}^{m}\left(I_{h}\right)=\left\{ v\in C\left(I\right):\begin{array}{cc}
v\mid_{\left[r_{n-1},r_{n}\right]}\in\pi_{m}, & \left(1\leq n\leq N\right)\end{array}\right\} ,
\]
where $\pi_{m}$ is the space of all vector-valued polynomials of
degree $\leq m$. The collocation method consists of finding an element
$q_{h}\in\mathcal{P}^{m}\left(I_{h}\right)$ that approximates the
solution of (\ref{eq:col_ODE}). Such approximation will be found
by requiring $q_{h}$ to satisfy (\ref{eq:col_ODE}) on a given finite
subset, $S_{h}$, of $I$, as well as the boundary conditions.

Let $S_{h}$, the set of collocation points, be given by

\[
S_{h}=\left\{ r_{n,i}=r_{n-1}+c_{i}h_{n}:\begin{array}{cc}
1\leq n\leq N, & 0=c_{1}<c_{2}<\cdots<c_{m}=1\end{array}\right\} ,
\]
where $\left\{ c_{i}\right\} $ are usually Gauss, Radau, or Lobatto
points (see, e.g., \citet{ascher1987numerical}). The collocation
solution $q_{h}\in\mathcal{P}^{m}\left(I_{h}\right)$ for (\ref{eq:col_ODE})
is defined by the equation

\[
\begin{array}{ccc}
q'_{h}\left(r_{n,i}\right)=f\left(q_{h}\left(r_{n,i}\right)\right), & r_{n,i}\in S_{h}, & b\left(q_{h}\left(0\right),q_{h}\left(1\right)\right)=0.\end{array}
\]
Now, consider a local Lagrange basis representation of $q_{h}$. Define

\[
\begin{array}{ccc}
L_{i}\left(\delta\right):=\overset{m}{\underset{k=1,k\neq i}{\prod}}\frac{\delta-c_{k}}{c_{i}-c_{k}}, & \left(i=1,...,m\right), & \delta\in\left[0,1\right].\end{array}
\]
Then the local polynomials can be written as

\[
q_{n}\left(r_{n-1}+\delta h_{n}\right)=\sum_{i=1}^{m}L_{i}\left(\delta\right)q_{h}\left(r_{n-1}+c_{i}h_{n}\right),\begin{array}{cc}
1\leq n\leq N, & \delta\in\left[0,1\right].\end{array}
\]
The collocation equations are, therefore,

\begin{equation}
\begin{array}{ccc}
q'_{n}\left(r_{n,i}\right)=f\left(q_{n}\left(r_{n,i}\right)\right), & r_{n,i}\in S_{h}, & 1\leq n\leq N,\end{array}\label{eq:col_eq_f}
\end{equation}
with the boundary conditions

\begin{equation}
\begin{array}{cc}
b_{l}\left(u_{0},u_{N}\right)=0, & l=1,...,d.\end{array}\label{eq:bcn_col_eq_f}
\end{equation}

Equations (\ref{eq:col_eq_f}) - (\ref{eq:bcn_col_eq_f}) can be solved
by a Newton-Chord iteration. It can be proven that if the solution
$u\left(r\right)$ of the boundary value problem is sufficiently smooth,
then the order of accuracy of the method is $m$, i.e., $\left\Vert q_{h}-u\right\Vert _{\infty}=\mathcal{O}\left(h^{m}\right)$,
where $h:=\max\left\{ h_{n}:1\leq n\leq N\right\} $ (the diameter
of the mesh $I_{h}$). For a detailed description of this method,
the reader is referred to \citet{ascher1987numerical} and \citet{brunner2004collocation}.

The Matlab function $\mathit{bvp5c}$ is used to solve system (\ref{eq:Sys_2})
by the collocation method. This function implements a four-point Lobatto
method capable of handling singularities of the first kind and mesh
adaptation. In addition, $\mathit{bvp5c}$ can converge even if the
initial guess is not very close to the solution; that is, the function
converges by using as an initial guess the Thomas-Fermi profile \citep{keeling2008spontaneous}: 

\begin{equation}
\left|\phi\right|^{2}=\begin{cases}
\begin{array}{cc}
\tilde{\mu}-r^{2}, & r<\sqrt{\tilde{\mu}},\\
0, & r\geq\sqrt{\tilde{\mu}},
\end{array} & \tilde{\mu}=3\alpha/2\sigma.\end{cases}\label{eq:Thomas-Fermi}
\end{equation}
It is important to note that $\mathit{bvp5c}$ always evaluates the
ODE at the end points. Then, due to the singularity, the value obtained
in (\ref{eq:val_at_zero}) must be supplied to $\mathit{bvp5c}$ when
evaluating the ODE at $r=0$. 

With 6000 grid points (initially equispaced) on the domain $D=[0,15]$,
the maximum residual obtained is less than $10^{-11}$ for the different
cases studied: $\alpha=4.4$, $\sigma=0.3$, $\kappa=10$, and $0<R<10$.
In the following section, a linear stability analysis is performed
on these results, which are later used as a starting point in the
numerical continuation.

\section{Linear stability analysis of the radially symmetric solutions}

Consider the radially symmetric solutions $\psi\left(r,t\right)=\exp\left(-i\mu t\right)\phi\left(r\right)$
obtained in the previous section. To study their linear stability,
small perturbations of the wave function can be expressed as

\begin{equation}
\psi\left(r,\theta,t\right)=\exp\left(-i\mu t\right)\left[\phi\left(r\right)+\varepsilon\left(u\left(r\right)\exp\left(-i\left(m\theta+\omega t\right)\right)+v^{*}\left(r\right)\exp\left(i\left(m\theta+\omega^{*}t\right)\right)\right)\right],\label{eq:pert_wf}
\end{equation}
with $\varepsilon<<1$, $m=1,2,3,...$

This form of the perturbation and the subsequent calculations are
known as Bogoliubov-De Gennes analysis. Notice that the perturbation
is time dependent with frequency $\omega$ and complex amplitude functions
$u\left(r\right)$ and $v\left(r\right)$. In addition, the perturbation
is both radial and angular, where the angular part is tested with
different modes given by $m$. The radially symmetric solutions are
linearly unstable if $\mathrm{Im}\left(\omega\right)>0$.

We proceed by evaluating the complex GP equation for the trial wave
function (\ref{eq:pert_wf}) to $\mathcal{O}\left(\varepsilon\right)$.
To simplify the notation, we write $\phi\left(r\right)$, $u\left(r\right)$
and $v\left(r\right)$ as $\phi$, $u$ and $v$, respectively. Equating
terms in $\exp\left(-i\mu t\right)$ yields the equation for the radially
symmetric solutions:

\begin{equation}
\mu\phi=\left(-\frac{d^{2}}{dr^{2}}-\frac{1}{r}\frac{d}{dr}+r^{2}+\left|\phi\right|^{2}+i\left(\alpha\Theta_{R}-\sigma\left|\phi\right|^{2}\right)\right)\phi.\label{eq:a}
\end{equation}
Equating terms in $\exp\left(-i\left(m\theta+\left(\mu+\omega\right)t\right)\right)$
leads to

\begin{equation}
\begin{array}{ccc}
\left(\mu+\omega\right)u & = & \left(-\frac{d^{2}}{dr^{2}}-\frac{1}{r}\frac{d}{dr}+\frac{m^{2}}{r^{2}}+r^{2}+2\left|\phi\right|^{2}+i\left(\alpha\Theta_{R}-2\sigma\left|\phi\right|^{2}\right)\right)u+\left(1-i\sigma\right)\phi^{2}v.\end{array}\label{eq:b}
\end{equation}
Equating terms in $\exp\left(i\left(m\theta-\left(\mu-\omega^{*}\right)t\right)\right)$
gives

\begin{equation}
\begin{array}{ccc}
\left(\mu-\omega^{*}\right)v^{*} & = & \left(-\frac{d^{2}}{dr^{2}}-\frac{1}{r}\frac{d}{dr}+\frac{m^{2}}{r^{2}}+r^{2}+2\left|\phi\right|^{2}+i\left(\alpha\Theta_{R}-2\sigma\left|\phi\right|^{2}\right)\right)v^{*}+\left(1-i\sigma\right)\phi^{2}u^{*}.\end{array}\label{eq:c}
\end{equation}
Taking the complex conjugate of (\ref{eq:c}) produces

\begin{equation}
\begin{array}{ccc}
\left(\mu-\omega\right)v & = & \left(-\frac{d^{2}}{dr^{2}}-\frac{1}{r}\frac{d}{dr}+\frac{m^{2}}{r^{2}}+r^{2}+2\left|\phi\right|^{2}-i\left(\alpha\Theta_{R}-2\sigma\left|\phi\right|^{2}\right)\right)v+\left(1+i\sigma\right)\left(\phi^{*}\right)^{2}u.\end{array}\label{eq:c-1}
\end{equation}
Finally, equations (\ref{eq:b}) and (\ref{eq:c-1}) can be written
as 

\begin{equation}
\left[\begin{array}{rr}
L_{1} & L_{2}\\
-L_{2}^{*} & -L_{1}^{*}
\end{array}\right]\left[\begin{array}{c}
u\\
v
\end{array}\right]=\omega\left[\begin{array}{c}
u\\
v
\end{array}\right],\label{eq:eigen_prob}
\end{equation}
with

\[
\begin{array}{ccl}
L_{1} & = & -\mu-\frac{d^{2}}{dr^{2}}-\frac{1}{r}\frac{d}{dr}+\frac{m^{2}}{r^{2}}+r^{2}+2\left|\phi\right|^{2}+i\left(\alpha\Theta_{R}-2\sigma\left|\phi\right|^{2}\right)\\
 & = & -\mu-\frac{d^{2}}{dr^{2}}-\frac{1}{r}\frac{d}{dr}+\frac{m^{2}}{r^{2}}+r^{2}+2\left(1-i\sigma\right)\left|\phi\right|^{2}+i\alpha\Theta_{R},
\end{array}
\]
and

\[
L_{2}=\left(1-i\sigma\right)\phi^{2}.
\]

The eigenvalue problem (\ref{eq:eigen_prob}) is discretized by using
centered differences and solved with the Matlab function $eig$. The
results for $R=2$ are plotted in Fig. (\ref{fig:St_R_2}). Figure
(\ref{fig:St_R_2}.b) shows the maximum imaginary part of the eigenvalues,
$\omega$, for $m=1,2,...,50$. Notice that all of them are negative;
thus, the solution is linearly stable under these perturbations. Taking
this into account, this solution will be used as a starting point
in the numerical continuation method presented in the following section.

Figures (\ref{fig:St_R_8}) and (\ref{fig:St_R_9}) show two cases
where the solutions are linearly unstable. An interesting observation
is the fact that the radially symmetric solutions are the same for
large enough values of $R$; we note that this is the case for $R=8$
and $R=9$. This characteristic was also observed by \citet{cuevas2011nonlinear}
for the one-dimensional complex GP equation. 

Fig (\ref{fig:stab_curve}) shows the maximum imaginary part of the
eigenvalues, $\omega$, for $0<R<10$, with spacing of 0.1 in $R$
and for $m=1,2,...,50$. Notice that for large values of $R$, which
in this case also implies large mass, the solutions become unstable.
However, the solutions are also unstable for small values of $R$
(small mass), i.e. at $R\lesssim0.6$. For $R\lesssim0.6$ it was
observed that a different branch of solutions with smaller chemical
potential, $\mu$, becomes stable. The case of large values of $R$
is studied in detail in Section 6. 

Finally, Fig (\ref{fig:currents_R2-R8}) shows the magnitude of the
current, $\left|J\right|$, corresponding to the solutions for $R=2,8,9$. 

\begin{figure}[H]
\begin{centering}
\begin{tabular}{cc}
a)\includegraphics[scale=0.34]{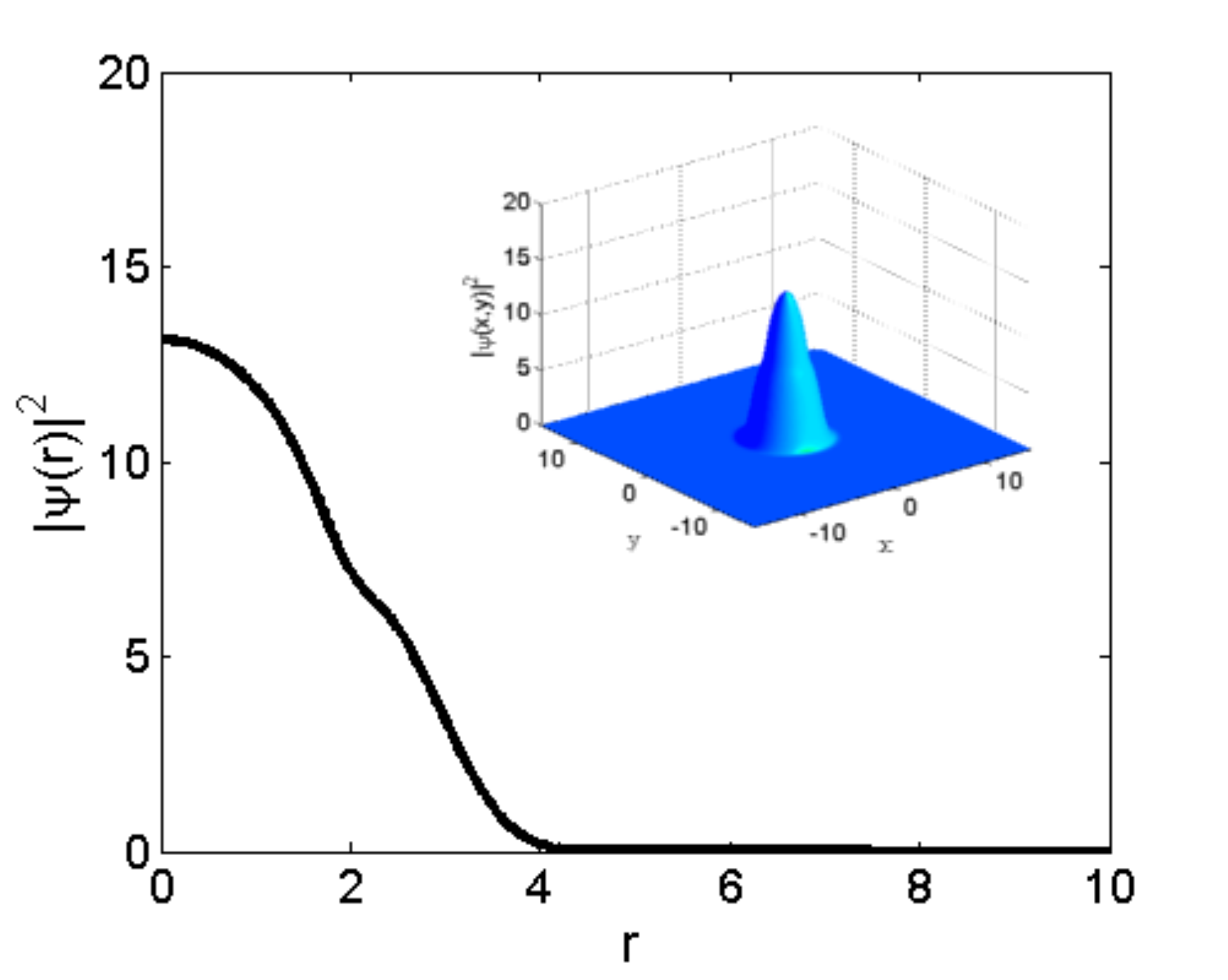} & b)\includegraphics[scale=0.34]{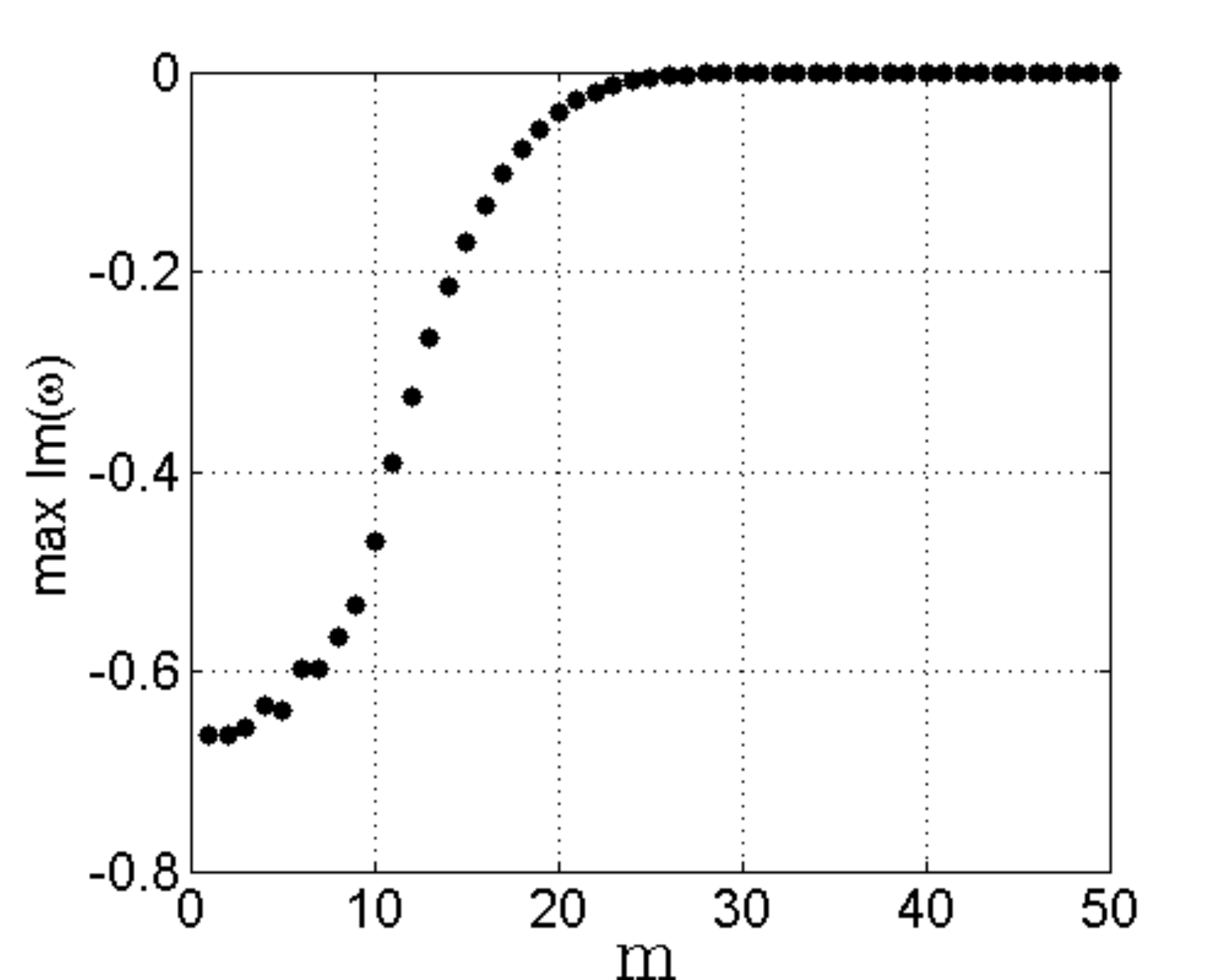}\tabularnewline
\end{tabular}
\par\end{centering}

\caption{a) Density profile of the radially symmetric solution for $R=2$,
with the two-dimensional view shown in the inset. b) Result from the
linear stability analysis: the plot shows the maximum imaginary part
of the eigenvalues, $\omega$, for $m=1,2,...,50$.\label{fig:St_R_2}}
\end{figure}

\begin{figure}[H]
\begin{centering}
\begin{tabular}{cc}
a)\includegraphics[scale=0.34]{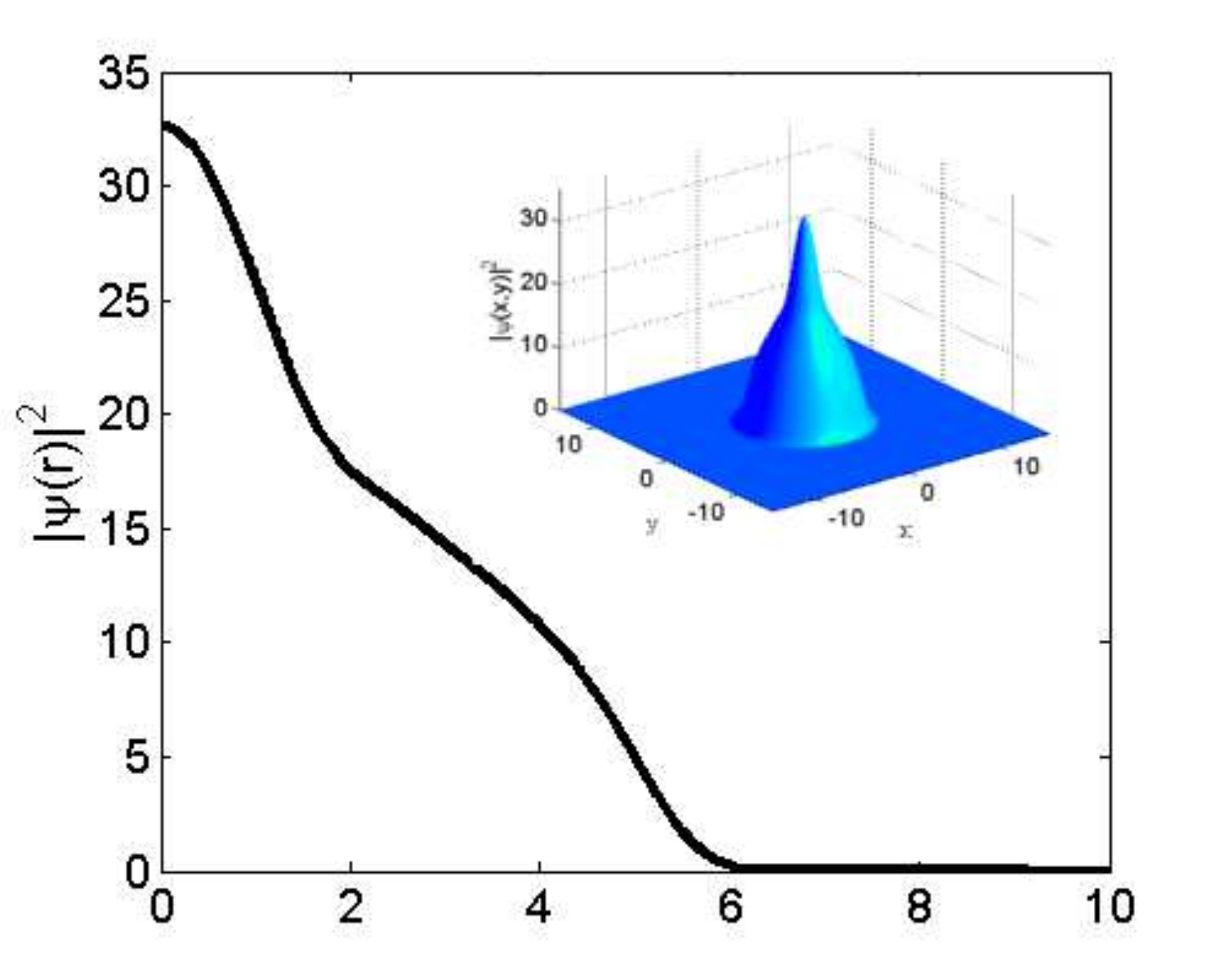} & b)\includegraphics[scale=0.34]{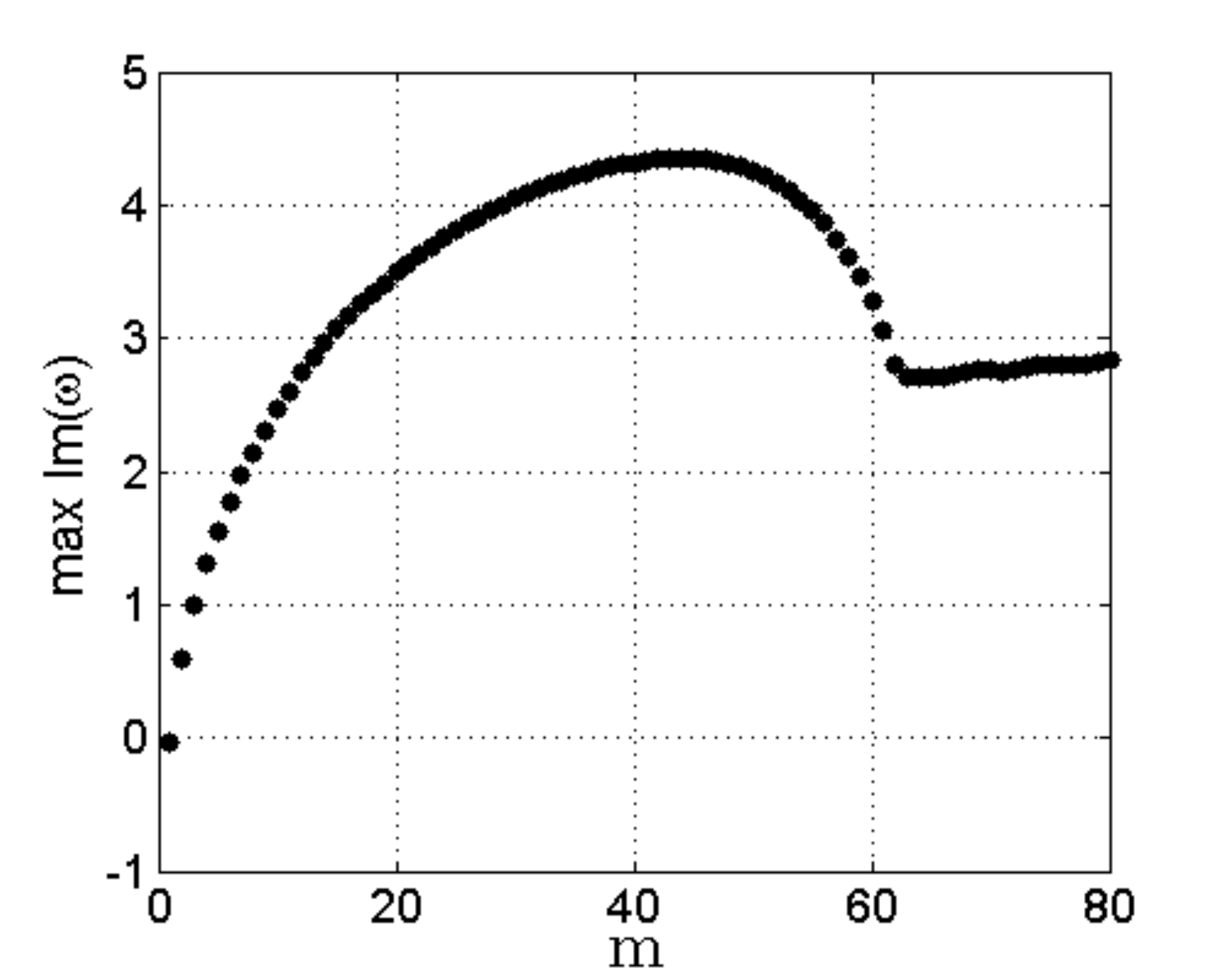}\tabularnewline
\end{tabular}
\par\end{centering}

\caption{a) Density profile of the radially symmetric solution for $R=8$,
with the two-dimensional view shown in the inset. b) Result from the
linear stability analysis: the plot shows the maximum imaginary part
of the eigenvalues, $\omega$, for $m=1,2,...,50$. \label{fig:St_R_8}}
\end{figure}

\begin{figure}[H]
\begin{centering}
\begin{tabular}{cc}
a)\includegraphics[scale=0.34]{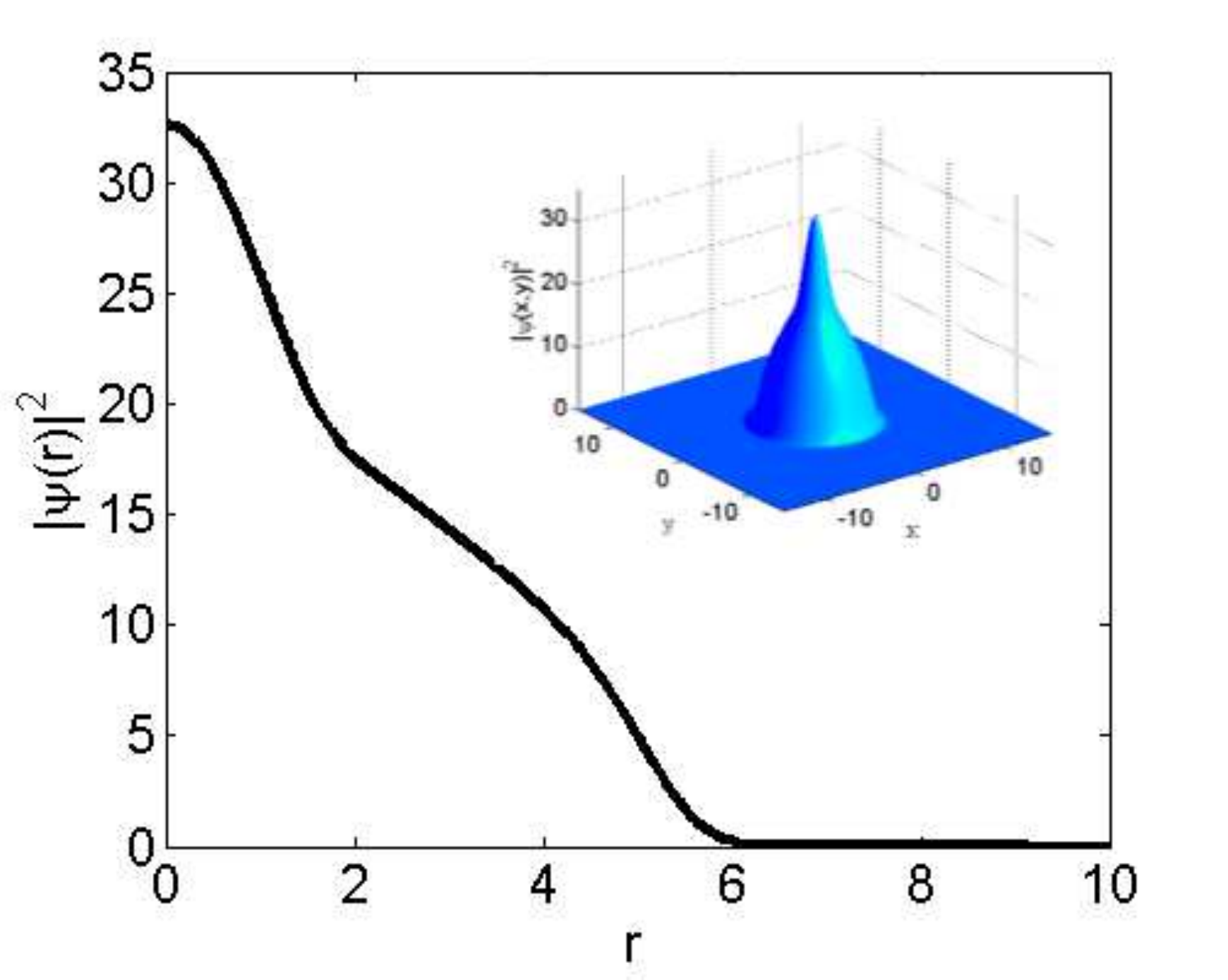} & b)\includegraphics[scale=0.34]{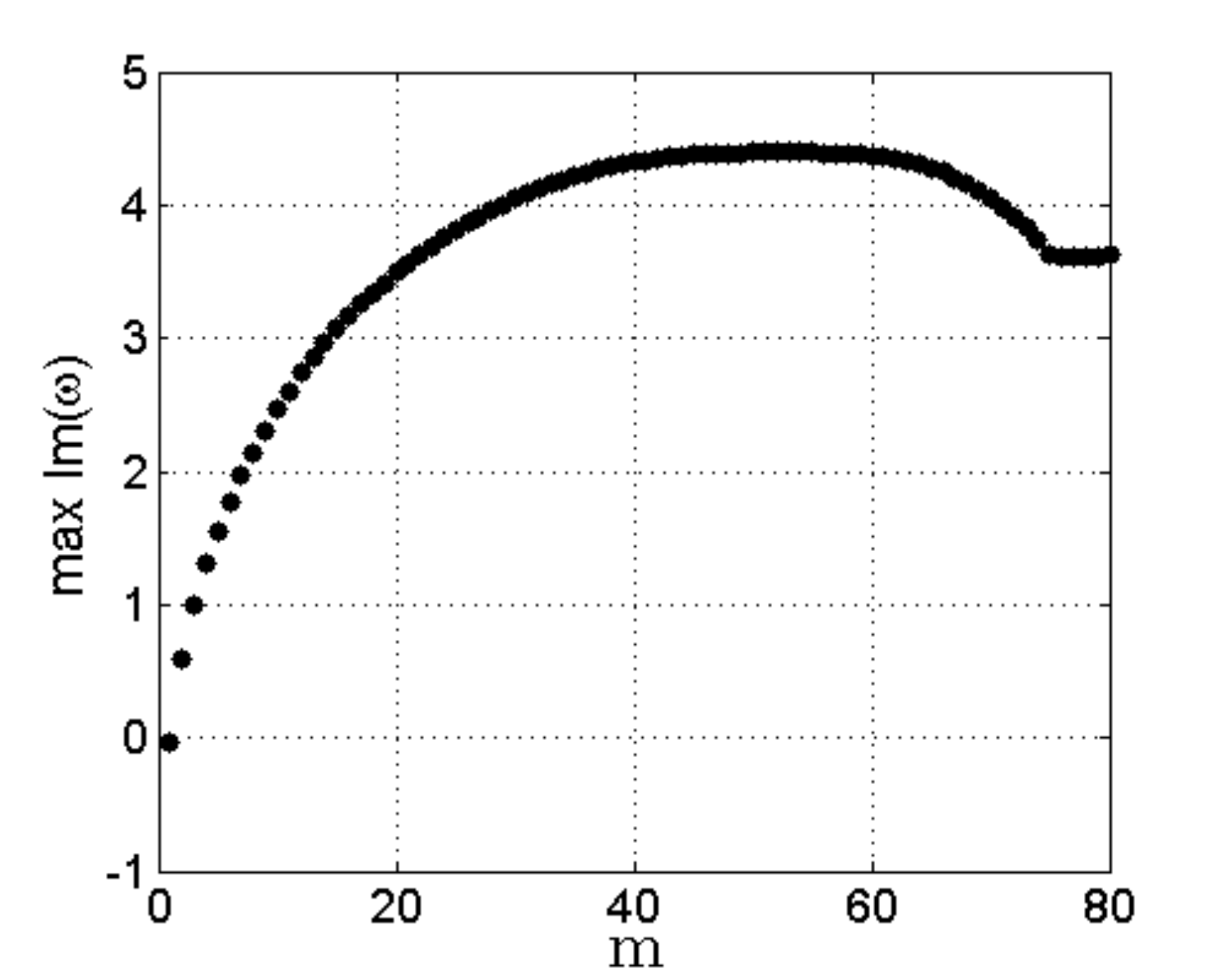}\tabularnewline
\end{tabular}
\par\end{centering}

\caption{a) Density profile of the radially symmetric solution for $R=9$,
with the two-dimensional view shown in the inset. b) Result from the
linear stability analysis: the plot shows the maximum imaginary part
of the eigenvalues, $\omega$, for $m=1,2,...,50$. \label{fig:St_R_9}}
\end{figure}

\begin{figure}[H]
\begin{centering}
\includegraphics[scale=0.3]{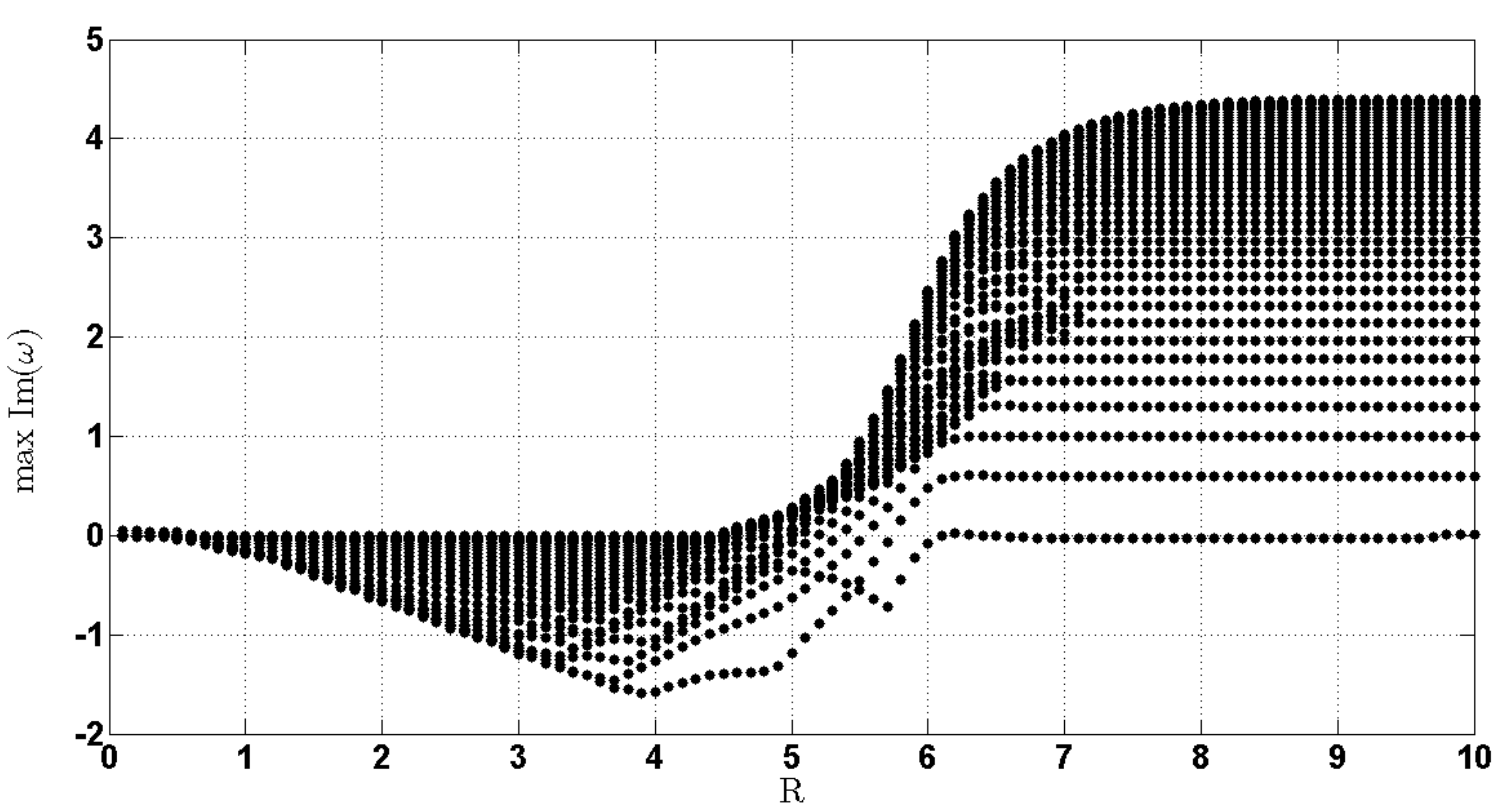}
\par\end{centering}

\caption{Results for $0<R<10$ (with spacing of 0.1) showing the maximum imaginary
part of the eigenvalues $\omega$ for $m=1,2,...,50$. Instability
exists at both $R\lesssim0.6$ and $R\gtrsim4.4$.\label{fig:stab_curve}}
\end{figure}

\begin{figure}[H]
\begin{centering}
\begin{tabular}{ccc}
\includegraphics[scale=0.35]{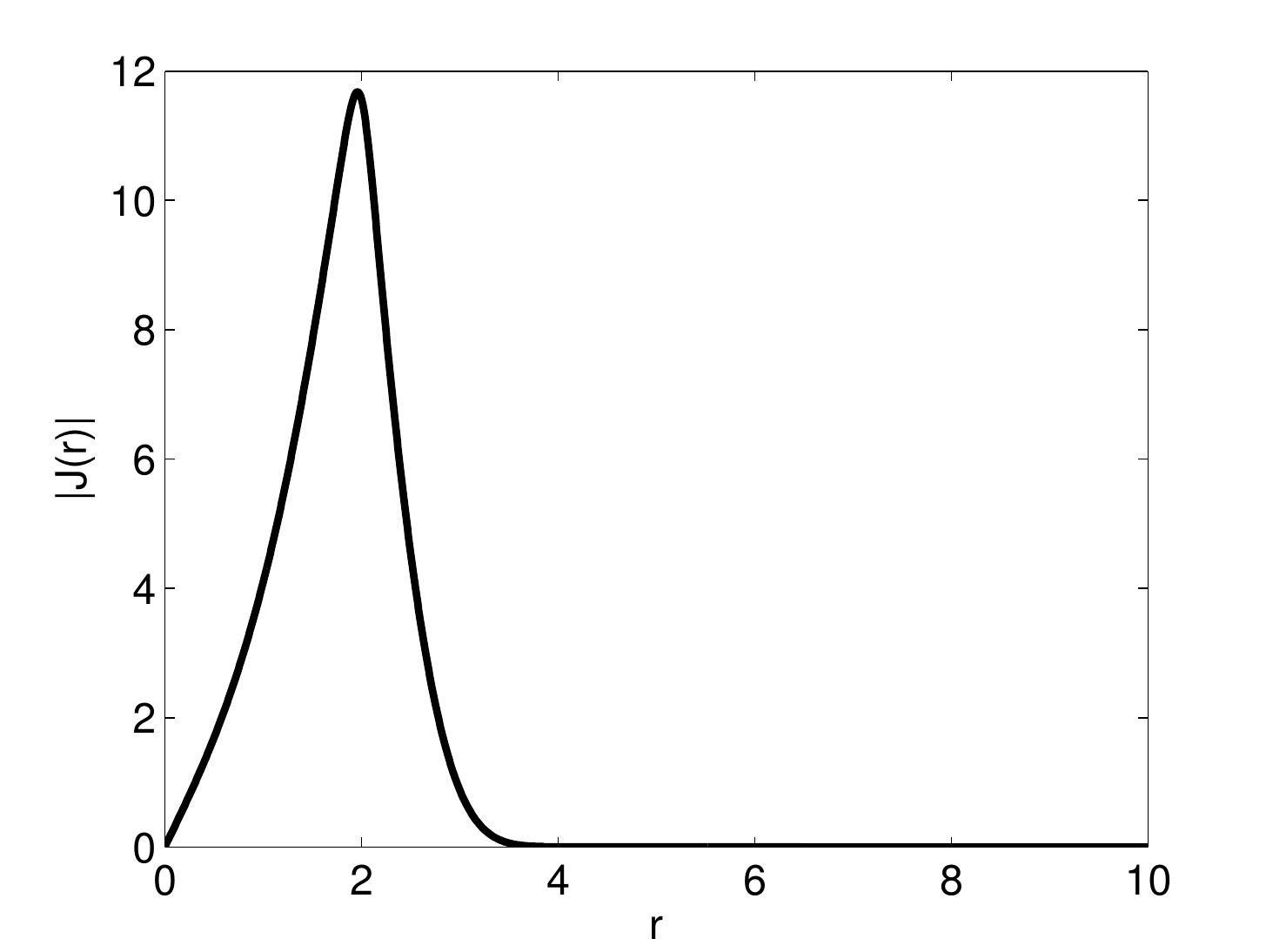} & \includegraphics[scale=0.35]{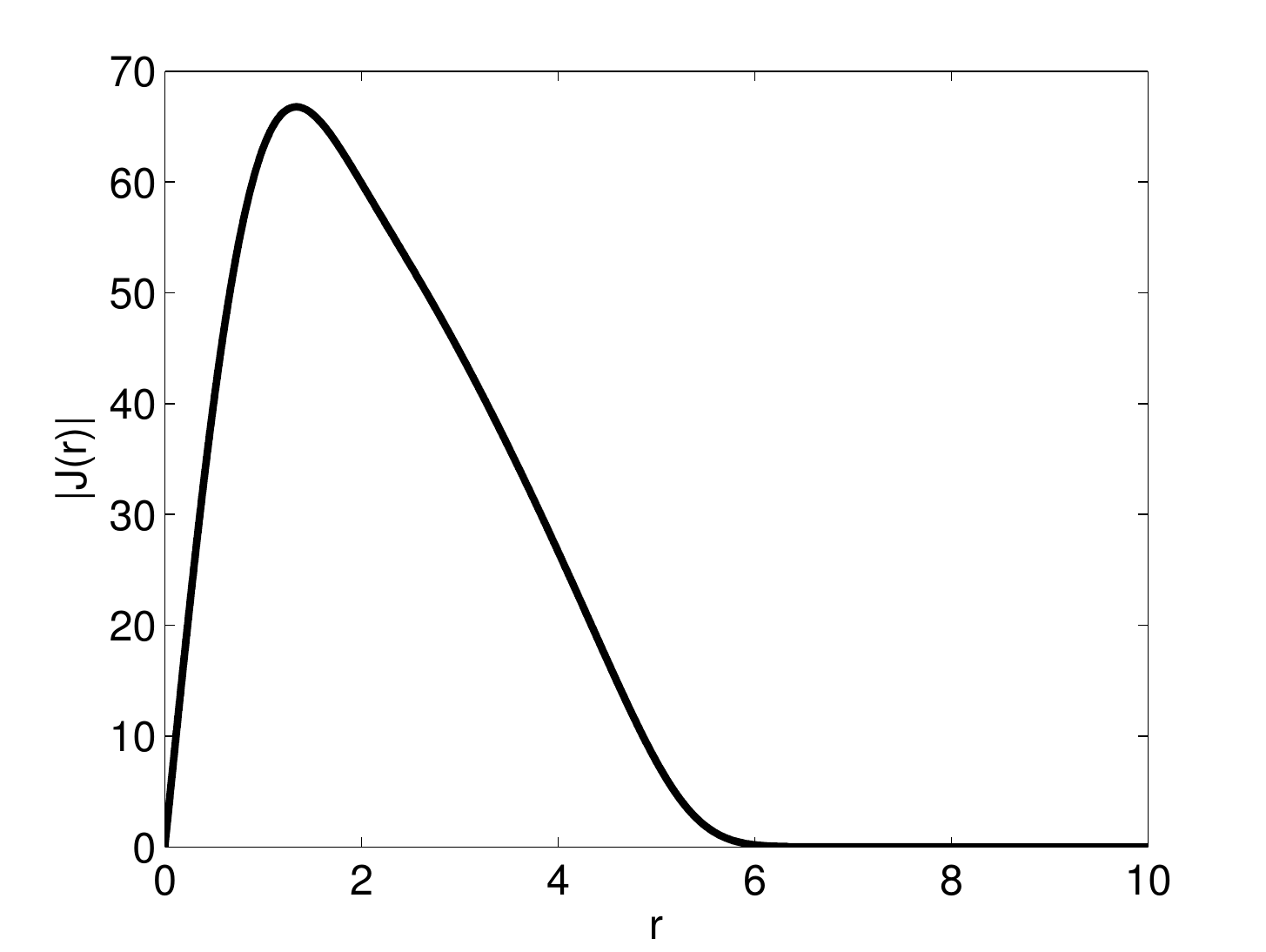} & \includegraphics[scale=0.35]{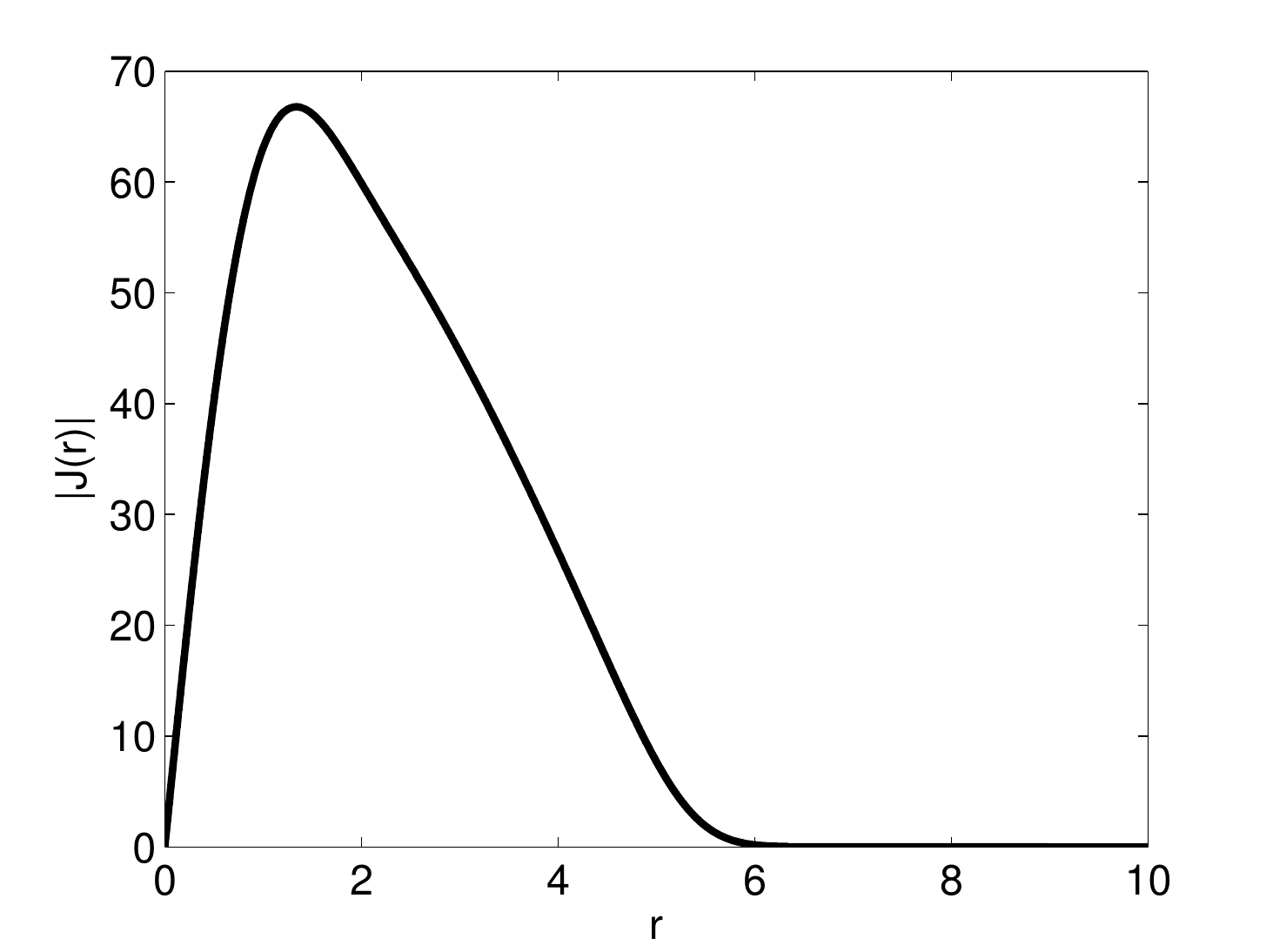}\tabularnewline
\multicolumn{3}{c}{}\tabularnewline
\end{tabular}
\par\end{centering}

\caption{Magnitude of the current $J$ corresponding to the solutions for $R=2$
(left), $R=8$ (center), and $R=9$ (right).\label{fig:currents_R2-R8}}
 
\end{figure}

\section{Numerical continuation for the stationary solutions}

Let us rewrite system (\ref{eq:Sys_2}) as

\begin{equation}
F\left(u,\lambda\right)=0,\label{eq:Sys_2_n}
\end{equation}
where $u$ is the solution of the system ($u\in\mathcal{B}$, a real
Banach space), $\lambda$ is a real parameter ($\lambda$ can be $\alpha$,
$\sigma$, or $R$), and $F$ is a continuously differentiable operator
such that $F:\mathcal{B}\times\mathbb{R}\rightarrow\mathcal{B}$.
The idea in this section is to study the dependence of the solution,
$u\left(\lambda\right)$, on the parameter, $\lambda$, i.e., to trace
the solution branches $\left[u\left(\lambda\right),\lambda\right]$
of (\ref{eq:Sys_2_n}). Since the operator, $F$, is nonlinear in
$u$ and $\lambda$, this can be performed numerically by Newton's
method: suppose that $\left(u_{0},\lambda_{0}\right)$ is a solution
of the discretized problem (\ref{eq:Sys_2_n}) and that the directional
derivative $\dot{u}_{0}=du_{0}/d\lambda$ is known. Then, the solution
$u_{1}$ at $\lambda_{1}=\lambda_{0}+\Delta\lambda$ (where $\Delta$
represents a small increment) can be computed as

\[
\begin{cases}
\begin{array}{c}
F_{u}\left(u_{1}^{i},\lambda_{1}\right)\Delta u_{1}^{i}=-F\left(u_{1}^{i},\lambda_{1}\right),\\
u_{1}^{i+1}=u_{1}^{i}+\Delta u_{1}^{i},\: i=0,1,2,...
\end{array}\end{cases}
\]
with

\[
u_{1}^{0}=u_{0}+\Delta\lambda\dot{u}_{0},
\]
where $F_{u}\left(u,\lambda\right)$ is the Jacobian matrix. 

This method, however, is unable to handle the continuation near singular
points known as turning points or folds, where the solution branch
bends back on itself and $F_{u}\left(u,\lambda\right)$ becomes singular.
These points are particularly important to study the excited states
of the complex GP equation. 

A pseudo-arclength continuation method can be used to overcome this
problem. The main difference from the previous algorithm is the parametrization
of the solution in terms of a new quantity, $\nu$, that approximates
the arclength, $s$, in the tangent space of the curve (instead of
the parametrization by $\lambda$). This is usually accomplished by
appending an auxiliary equation to (\ref{eq:Sys_2_n}) that approximates
the arclength condition

\begin{equation}
\left\Vert \dot{u}\left(s\right)\right\Vert ^{2}+\left|\dot{\lambda}\left(s\right)\right|^{2}=1.\label{eq:arc_cond}
\end{equation}
This leads to the augmented system

\begin{equation}
\begin{cases}
\begin{array}{c}
F\left(u\left(\nu\right),\lambda\left(\nu\right)\right)=0,\\
N\left(u\left(\nu\right),\lambda\left(\nu\right),\nu\right)=0,
\end{array}\end{cases}\label{eq:Sys_aug}
\end{equation}
with unknowns $u\left(\nu\right)$ and $\lambda\left(\nu\right)$.
Then, Newton's method (or one of its variants) can be used to solve
(\ref{eq:Sys_aug}), in which case the Jacobian of the system becomes
the bordered matrix:

\[
G=\left[\begin{array}{cc}
F_{u} & F_{\lambda}\\
N_{u} & N_{\lambda}
\end{array}\right].
\]
For a detailed description of bordered matrices, see, e.g., \citet{govaerts2000numerical}.
The function $N$ is defined such that the Jacobian $G$ is nonsingular
on the solution branch, including turning points and their neighborhoods.
$N\left(u,\lambda,\nu\right)\equiv\dot{u}_{0}^{T}\left(u-u_{0}\right)+\dot{\lambda}_{0}\left(\lambda-\lambda_{0}\right)-\nu$
is one of the most frequently used definitions of $N$. It was introduced
by \citet{keller1977numerical}. With this particular definition of
$N$, the method is known as Keller's algorithm.

The pseudo-arclength continuation method can be summarized as follows:
the unit tangent $\left(\dot{u}_{0},\dot{\lambda}_{0}\right)$ at
$\left(u_{0},\lambda_{0}\right)$ is obtained from its definition:

\[
\begin{cases}
\begin{array}{c}
F_{u}\left(u_{0},\lambda_{0}\right)\dot{u}_{0}+F_{\lambda}\left(u_{0},\lambda_{0}\right)\dot{\lambda}_{0}=0,\\
\left\Vert \dot{u}_{0}\right\Vert ^{2}+\left|\dot{\lambda}_{0}\right|^{2}=1.
\end{array}\end{cases}
\]
Then, an approximate solution $\left(u_{a},\lambda_{a}\right)$ is
computed as

\[
\begin{array}{c}
u_{a}=u_{0}+\nu\dot{u}_{0},\\
\lambda_{a}=\lambda_{0}+\nu\dot{\lambda}_{0},
\end{array}
\]
which is used as an initial guess in a Newton-like method for solving
(\ref{eq:Sys_aug}) to obtain ($u\left(\nu\right),\lambda\left(\nu\right)$).

The software package AUTO was used for the continuation of the two-dimensional
radially symmetric solutions of the complex GP equation. AUTO implements
Keller's pseudo-arclength continuation algorithm. A detailed description
of this algorithm can be found in \citet{keller1987lectures}. In
addition to detecting turning points, AUTO can also find branch points,
a feature that will be used to study the excited states of the complex
GP equation. For the documentation of AUTO, see \citet{doedelauto}.

AUTO discretizes system (\ref{eq:Sys_2}) using polynomial collocation
with Gaussian points. The number of free parameters, $p$, controlled
by AUTO during the continuation process is given by $p=n_{bc}+1-n$,
where $n_{bc}$ is the number of boundary conditions and $n$ is the
dimension of the system. Hence, the continuation for system (\ref{eq:Sys_2})
can be done in one parameter. For the results presented in this section,
the number of mesh intervals is 6000, the number of Gaussian collocation
points per mesh interval is 7, and the maximum absolute value of the
pseudo-arclength stepsize is 0.0001. The parallel version of AUTO
was run on a Beowulf-class heterogeneous computer cluster using 16
nodes for a total of 128 cores (Intel Xeon X5570, 2.93GHz).

Figure (\ref{fig:Cont_sigma_for_mu}) shows the chemical potential,
$\mu$, of the different solutions obtained by the continuation in
$\sigma$ with $\alpha=4.4$ and $R=2$ fixed. By using different
initial guesses (multi-bump profiles) in the collocation method described
in Section 3, it was possible to find some of the solutions shown
in Figs. (\ref{fig:Ex_states}) and (\ref{fig:Cont_ex_states}). These
solutions, along with the capability of AUTO to detect branch points
and turning points, were employed to produce the result displayed
in Fig. (\ref{fig:Cont_sigma_for_mu}). In this figure, the red dashed
line indicates the solutions corresponding to $\sigma=0.3$, $\alpha=4.4$
and $R=2$. Figs. (\ref{fig:Ex_states}) and (\ref{fig:Cont_ex_states})
show the density profiles of these solutions. Notice that the linearly
stable solution studied in Section 4 has the smallest chemical potential.
In this context, this solution can be considered as the ground state
solution, whereas the other solutions are excited states. In any case,
an appropriate definition of the energy for this equation is necessary
to determine a ground state solution. From the numerical results obtained
up to now, we expect that the solution labeled 0 in Figs. (\ref{fig:Cont_sigma_for_mu})
and (\ref{fig:Ex_states}) is the minimizer of such energy. 

Equation (\ref{eq:eq_mu}) can also be used for the computation of
the chemical potential. In this case

\begin{equation}
\mu\int_{0}^{b}\left|\phi\left(r\right)\right|^{2}rdr=\int_{0}^{b}\left[\left|\phi'\left(r\right)\right|^{2}+r^{2}\left|\phi\left(r\right)\right|^{2}+\left|\phi\left(r\right)\right|^{4}\right]rdr,\label{eq:exp_mu}
\end{equation}
where the integration domain is $\left[0,b\right]$. Using Simpson's
rule, this expression was used for a second validation of the results
presented in Fig. (\ref{fig:Cont_sigma_for_mu}).

\begin{figure}[H]
\begin{centering}
\includegraphics[scale=0.65]{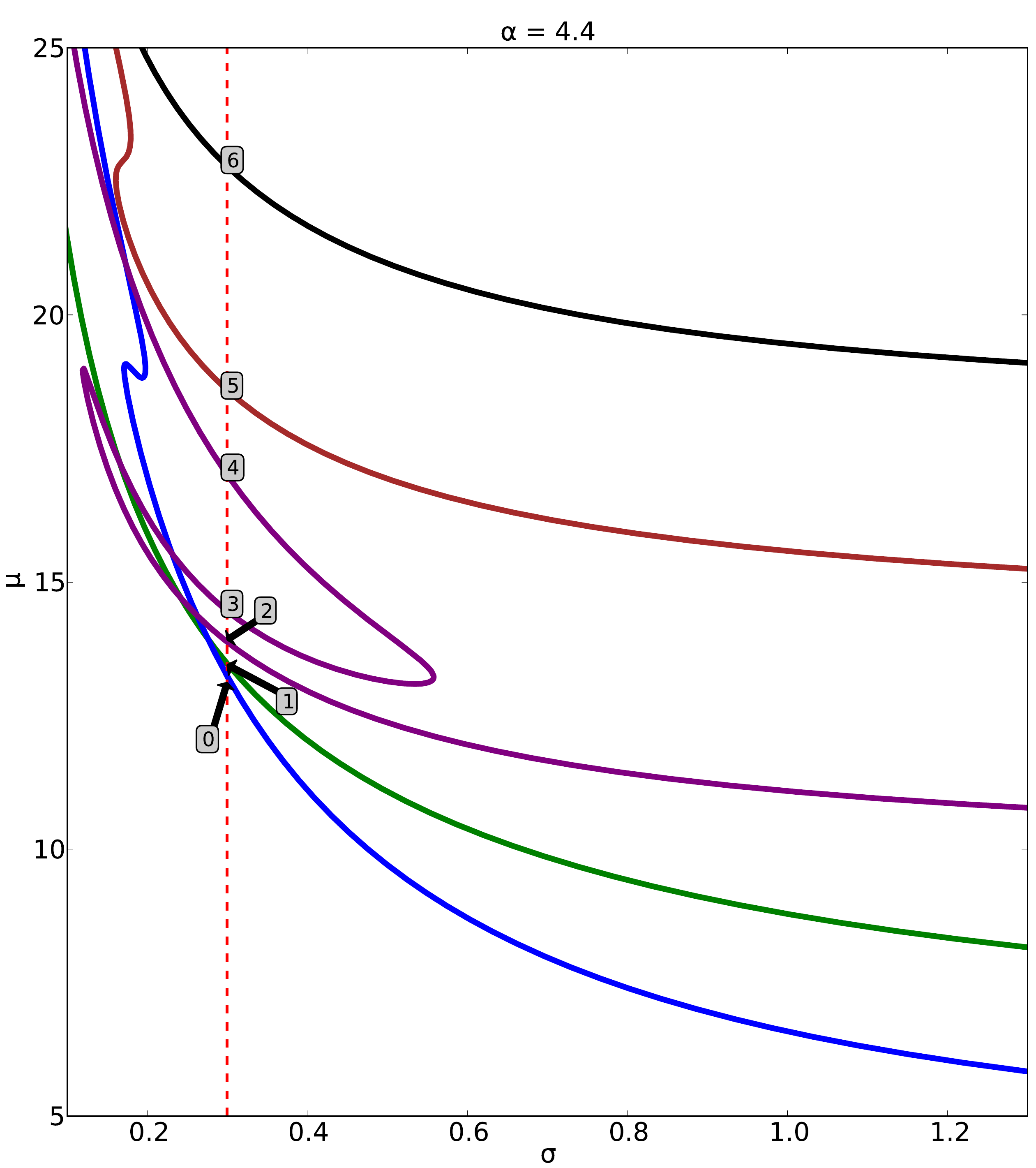}
\par\end{centering}

\caption{Result from the continuation method showing the chemical potential
of the system for different values of $\sigma$ with $\alpha=4.4$
and $R=2$ fixed. The red dashed line indicates the solutions for
$\sigma=0.3$. Figs. (\ref{fig:Ex_states}) and (\ref{fig:Cont_ex_states})
show the density profiles corresponding to these solutions. \label{fig:Cont_sigma_for_mu}}
\end{figure}

\begin{figure}[H]
\begin{centering}
\begin{tabular}{cc}
\includegraphics[scale=0.4]{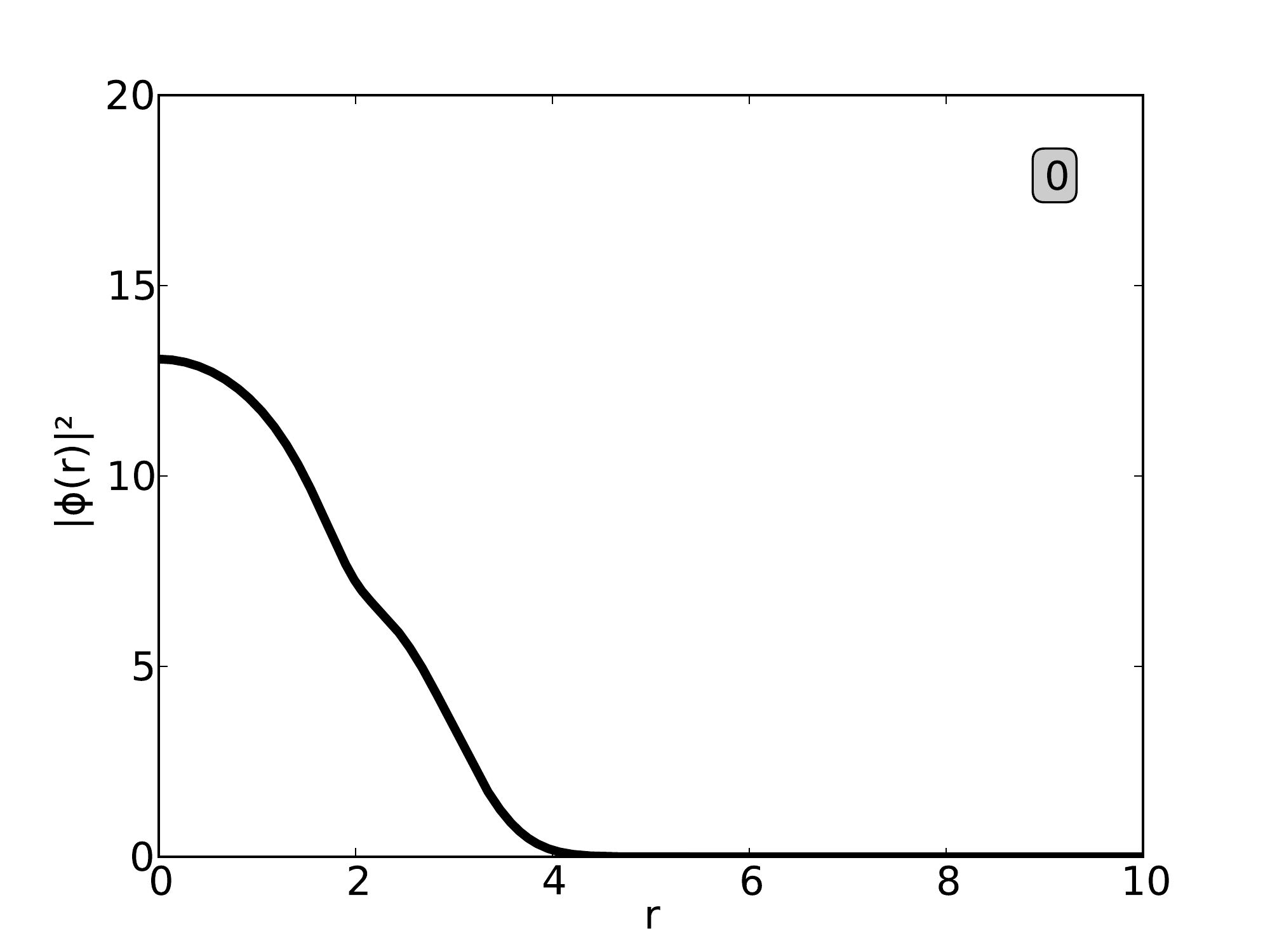} & \includegraphics[scale=0.4]{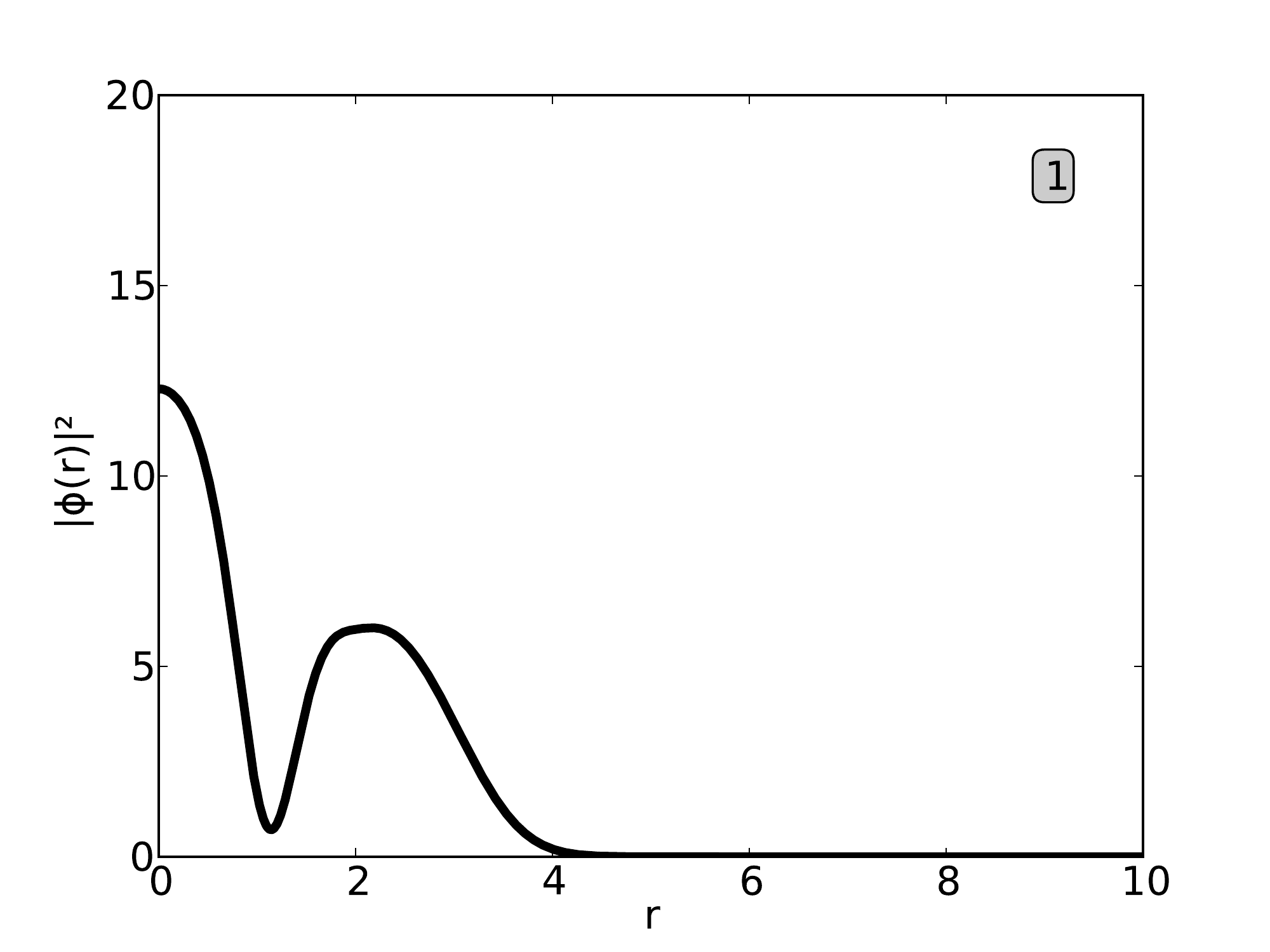}\tabularnewline
\includegraphics[scale=0.4]{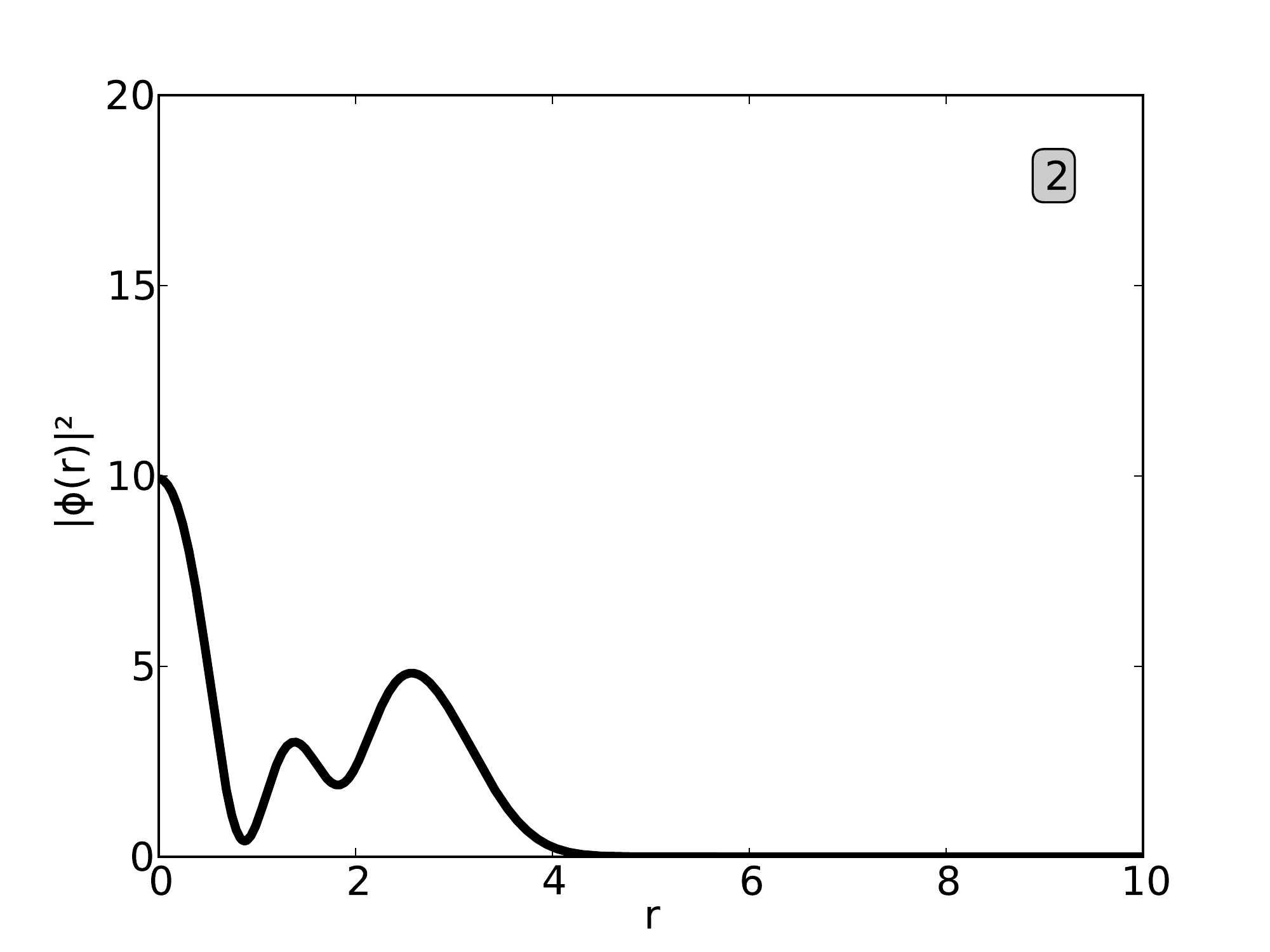} & \includegraphics[scale=0.4]{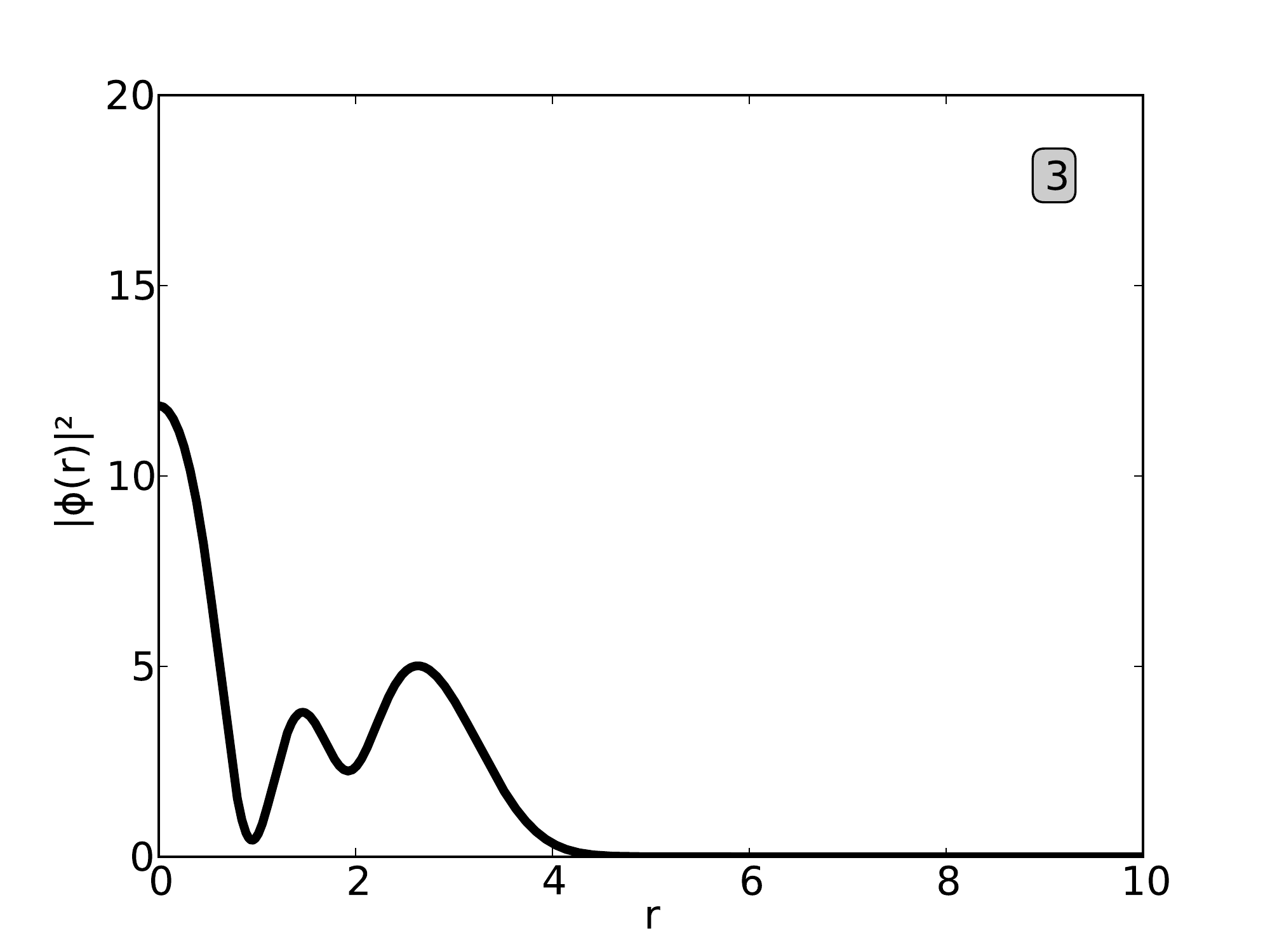}\tabularnewline
\includegraphics[scale=0.4]{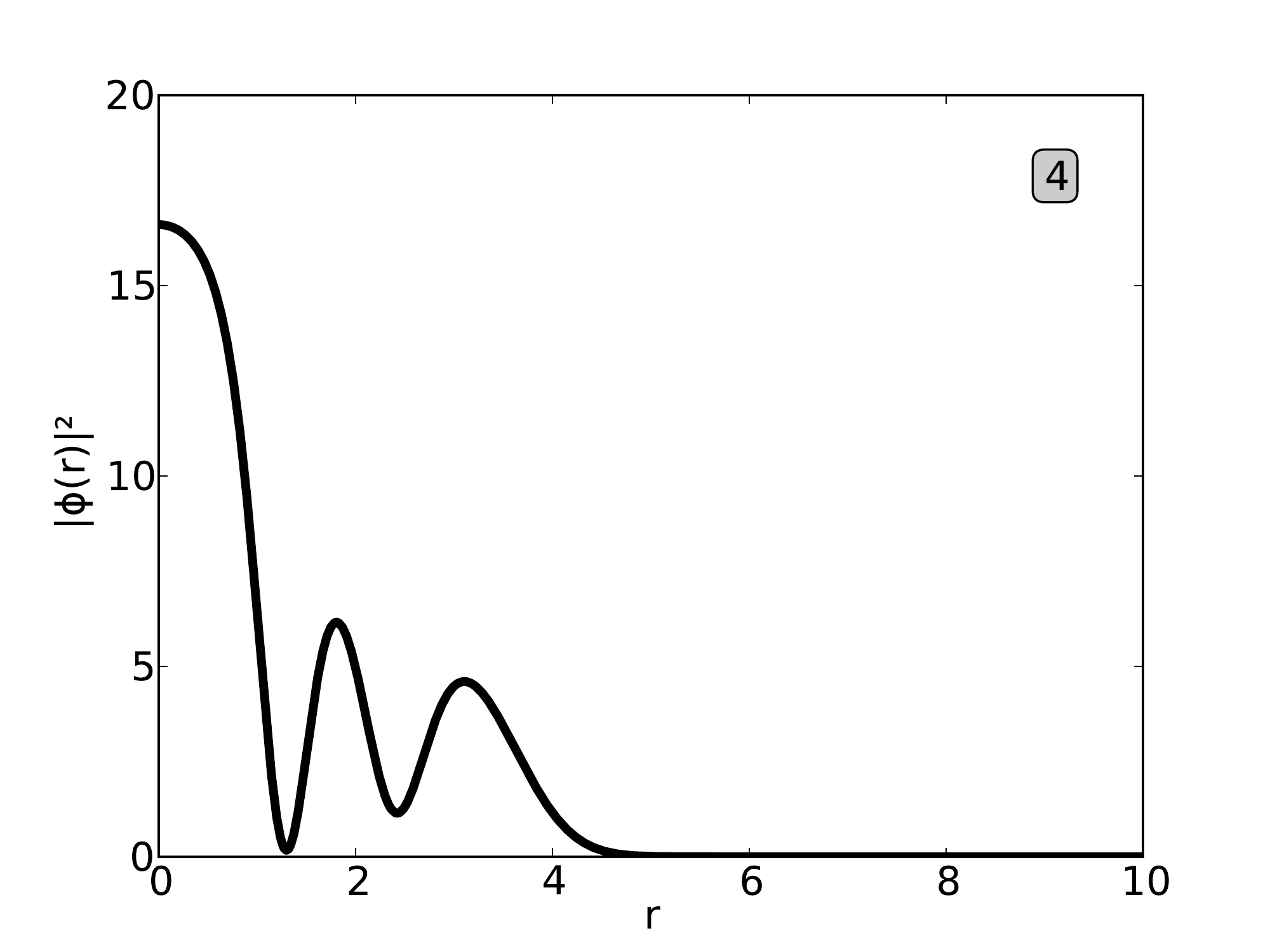} & \includegraphics[scale=0.4]{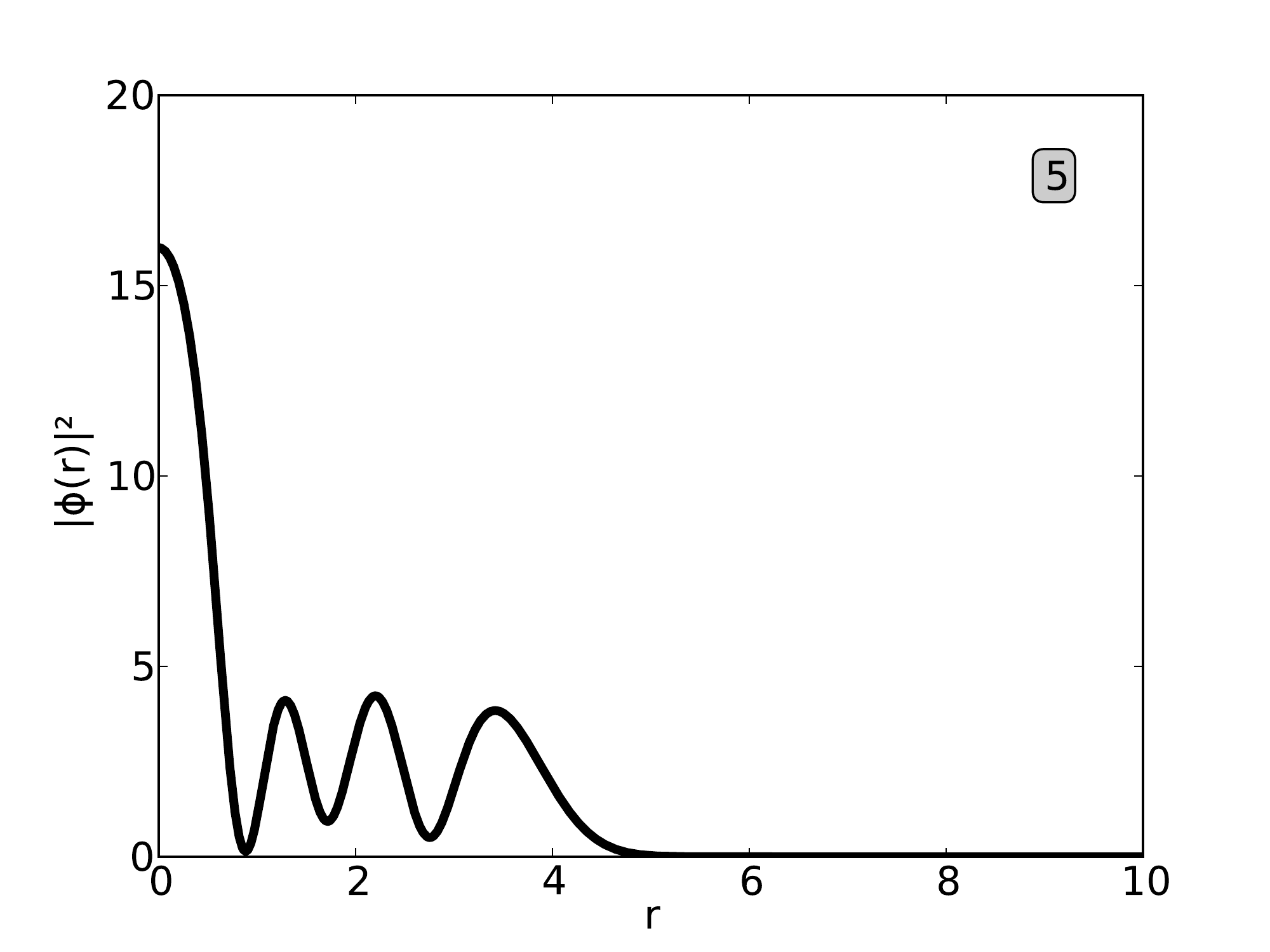}\tabularnewline
\end{tabular}
\par\end{centering}

\caption{Density profiles of the solutions labeled 0 to 5 in Fig. (\ref{fig:Cont_sigma_for_mu})
. \label{fig:Ex_states}}
\end{figure}

\begin{figure}[H]
\begin{centering}
\includegraphics[scale=0.4]{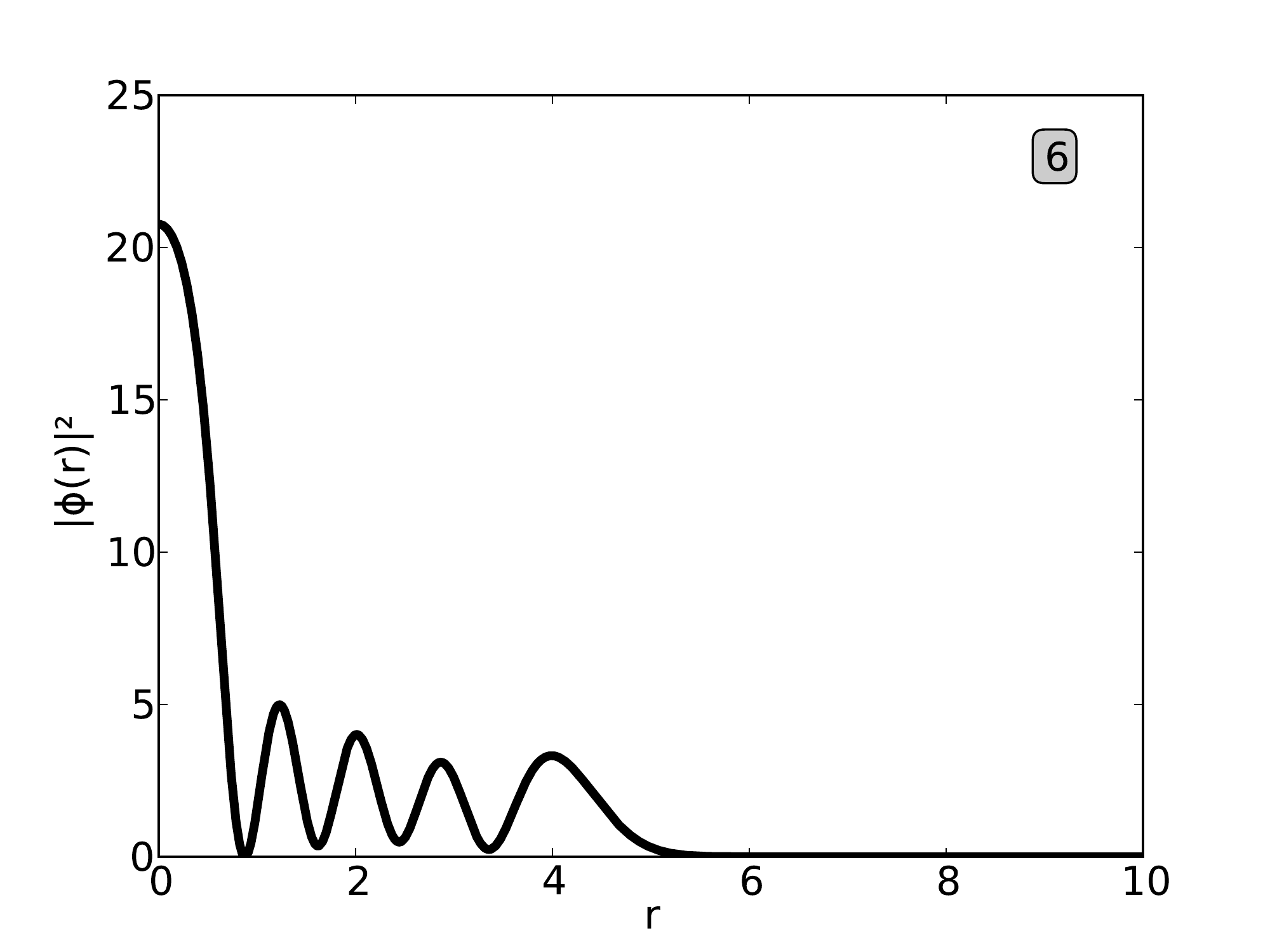}
\par\end{centering}

\caption{Density profile of the solution labeled 6 in Fig. (\ref{fig:Cont_sigma_for_mu})
\label{fig:Cont_ex_states} }
\end{figure}

\section{Numerical integration of the complex GP equation}

In this section, the numerical integration of the complex GP equation
is carried out by using a Strang-splitting Fourier spectral method.
This method was studied by \citet{bao2002time} for the Schrödinger
equation in the semiclassical regime and by \citet{bao2003numerical}
for the GP equation. For simplicity of notation, the method is presented
in one dimension. The generalization to higher dimensions is straightforward
using the tensor product grids. The problem can be stated as
\begin{equation}
\begin{array}{c}
i\psi_{t}=-\psi_{xx}+V(x)\psi+\left|\psi\right|^{2}\psi+i\left(\alpha\Theta_{R}-\sigma\left|\psi\right|^{2}\right)\psi,\\
\psi(x,0)=\psi_{0}\left(x\right),\quad a\leq x\leq b,\quad\psi(a,t)=\psi(b,t),\quad\psi_{x}(a,t)=\psi_{x}(b,t),\quad t>0.
\end{array}\label{eq:1D_TSSM_cGPE}
\end{equation}
Notice that in this case, periodic boundary conditions are specified,
which are necessary for the implementation, because the discrete Fourier
transform is used in one of the steps. In addition, $\psi$ is concentrated
due to the trapping potential and vanishes very quickly as $\left|x\right|$
increases, making these boundary conditions a good option provided
that the domain of integration is large enough.

Consider $h=\Delta x>0$ as the mesh size, with $h=(b-a)/M$, where
$M$ is an even positive integer. Define $\tau=\Delta t>0$ as the
time step. Let the grid points be $x_{j}=a+jh,\: j=0,1,...,M$, and
the time $t_{n}=n\tau,\: n=0,1,2,...$ Finally, define $\Psi_{j}^{n}$
as the numerical approximation of $\psi(x_{j},t_{n})$. Equation (\ref{eq:1D_TSSM_cGPE})
is now separated into

\begin{equation}
i\psi_{t}=V(x)\psi+\left|\psi\right|^{2}\psi+i\left(\alpha\Theta_{R}-\sigma\left|\psi\right|^{2}\right)\psi\label{eq:sp_1}
\end{equation}
and

\begin{equation}
i\psi_{t}=-\psi_{xx},\label{eq:sp_2}
\end{equation}
which are combined via Strang-splitting to approximate the solution
of (\ref{eq:1D_TSSM_cGPE}) on $\left[t_{n},t_{n+1}\right]$.

Step 1 of the Strang-splitting method requires the solution of (\ref{eq:sp_1})
on a half time step, from $t_{n}$ to $t_{n}+\tau/2$. Due to the
smooth Heaviside $\Theta$ in (\ref{eq:sp_1}), two equations are
solved: for $\Theta_{R}>0$

\begin{equation}
i\psi_{t}=V(x)\psi+\left|\psi\right|^{2}\psi+i\left(\alpha\Theta_{R}-\sigma\left|\psi\right|^{2}\right)\psi,\label{eq:Sp1a}
\end{equation}
and for $\Theta_{R}=0$

\begin{equation}
i\psi_{t}=V(x)\psi+\left|\psi\right|^{2}\psi-i\sigma\left|\psi\right|^{2}\psi.\label{eq:Sp1b}
\end{equation}
Substituting $\psi\left(x,t\right)=\rho\left(x,t\right)e^{i\theta\left(x,t\right)}$
in (\ref{eq:Sp1a}), where $\rho$ is the magnitude and $\theta$
is the phase of $\psi$, gives

\begin{equation}
i\rho_{t}-\rho\theta_{t}=V(x)\rho+\rho^{3}+i\left(\alpha\Theta_{R}-\sigma\rho^{2}\right)\rho.\label{eq:Sub1}
\end{equation}
Separating real and imaginary parts from (\ref{eq:Sub1}) and looking
for non-trivial solutions lead to

\begin{equation}
\theta_{t}=-V(x)-\rho^{2},\label{eq:RealSub1}
\end{equation}

\begin{equation}
\rho_{t}=\left(\alpha\Theta_{R}-\sigma\rho^{2}\right)\rho.\label{eq:ImagSub1}
\end{equation}
The solution of (\ref{eq:ImagSub1}) is given by

\[
\rho=\sqrt{\frac{\rho_{0}^{2}e^{2\alpha\Theta_{R}t}}{1+\frac{\sigma}{\alpha}\rho_{0}^{2}\left(e^{2\alpha\Theta_{R}t}-1\right)/\Theta_{R}}},
\]
where $\rho_{0}=\rho\left(x,0\right)$ . With this result, (\ref{eq:RealSub1})
can be solved, obtaining

\[
\theta=-V(x)t-\frac{1}{2\sigma}\mathrm{ln}\left|1+\frac{\sigma}{\alpha}\rho_{0}^{2}\left(e^{2\alpha\Theta_{R}t}-1\right)/\Theta_{R}\right|+\theta_{0},
\]
where $\theta_{0}$ is the phase of the initial condition.

Following the same procedure for (\ref{eq:Sp1b}) yields

\[
\rho=\frac{\rho_{0}}{\sqrt{2\sigma\rho_{0}^{2}t+1}},
\]

\[
\theta=-V(x)t-\frac{1}{2\sigma}\mathrm{ln}\left|2\sigma\rho_{0}^{2}t+1\right|+\theta_{0}.
\]
Hence, defining $\Theta_{j}\equiv\Theta\left(R-\left|x_{j}\right|\right)$,
the solution of Step 1 can be written as:

\[
\Psi_{j}^{(1)}=\left\{ \begin{array}{cc}
\Psi_{j}^{n}U_{j}^{(1)}e^{i\theta_{j}^{(1)}}, & \Theta_{j}>0,\\
\Psi_{j}^{n}W_{j}^{(1)}e^{i\phi_{j}^{(1)}}, & \Theta_{j}=0,
\end{array}\right.
\]
where

\[
U_{j}^{(1)}=\sqrt{\frac{e^{\alpha\Theta_{j}\tau}}{1+\frac{\sigma}{\alpha}\left|\Psi_{j}^{n}\right|^{2}\left(e^{\alpha\Theta_{j}\tau}-1\right)/\Theta_{j}}},
\]

\[
\theta_{j}^{(1)}=-V(x_{j})\tau/2-\frac{1}{2\sigma}\mathrm{ln}\left|1+\frac{\sigma}{\alpha}\left|\Psi_{j}^{n}\right|^{2}\left(e^{\alpha\Theta_{j}\tau}-1\right)/\Theta_{j}\right|,
\]

\[
W_{j}^{\left(1\right)}=\frac{1}{\sqrt{\sigma\left|\Psi_{j}^{n}\right|^{2}\tau+1}},
\]

\[
\phi_{j}^{(1)}=-V(x_{j})\tau/2-\frac{1}{2\sigma}\mathrm{ln}\left|\sigma\left|\Psi_{j}^{n}\right|^{2}\tau+1\right|.
\]

For Step 2, equation (\ref{eq:sp_2}) has to be solved on a complete
time step, from $t_{n}$ to $t_{n}+\tau$, using the solution of Step
1 as the initial condition. Equation (\ref{eq:sp_2}) is discretized
in space by the Fourier spectral method and integrated in time exactly,
giving

\[
\Psi_{j}^{(2)}=\frac{1}{M}\sum_{l=-M/2}^{M/2-1}e^{-i\omega_{l}^{2}\tau}\widehat{\Psi}_{l}^{(1)}e^{i\omega_{l}(x_{j}-a)},\quad j=0,1,2,...,M-1,
\]
where $\widehat{\Psi}_{l}^{(1)}$, the Fourier coefficients of $\Psi^{(1)}$,
are defined as

\[
\widehat{\Psi}_{l}^{(1)}=\sum_{j=0}^{M-1}\Psi_{j}^{(1)}e^{-i\omega_{l}(x_{j}-a)},\quad\omega_{l}=\frac{2\pi l}{b-a},\quad l=-\frac{M}{2},...,\frac{M}{2}-1.
\]

Finally, Step 3 requires the solution of (\ref{eq:sp_1}) on a half-time
step, from $t_{n}+\tau/2$ to $t_{n}+\tau$. Hence, the same expression
as in step 1 is used, but with $\Psi^{\left(2\right)}$ as an initial
condition. The result from this step corresponds to $\Psi^{n+1}$.

The method just presented is second-order accurate in time (due to
the Strang splitting) and spectrally accurate in space. Its stability
can be studied following the ideas in \citet{bao2003numerical}. Let
$\left\Vert \cdot\right\Vert _{l^{2}}$ be the discrete $l^{2}-$norm
$\left\Vert \Psi\right\Vert _{l^{2}}=\sqrt{\frac{b-a}{M}\sum_{j=0}^{M-1}\left|\psi_{j}\right|^{2}}$.
We get

\[
\left\Vert \Psi^{n+1}\right\Vert _{l^{2}}^{2}\leq e^{2\alpha\tau}\left\Vert \Psi^{n}\right\Vert _{l^{2}}^{2},
\]
which indicates that the method is unconditionally stable in the sense
of Lax–Richtmyer. 

For all the methods presented so far, it is important to emphasize
the use of the smooth Heaviside function. The original complex GP
equation proposed by \citet{keeling2008spontaneous} includes a Heaviside
function for the pumping part. With the latter, it is easy to see
that the radially symmetric solutions have a discontinuity in the
second derivative. This discontinuity reduces the accuracy of the
collocation method and produces the Gibbs phenomenon in the splitting
method due to the spectral part.

\subsection{Simulation results}

The results corresponding to the 2D case are presented in this section.
The integration domain is $\left[-15,15\right]\times\left[-15,15\right]$,
with $1024$ divisions per axis and $\tau=0.001$. The ground state
solution of the harmonic oscillator is used as an initial condition
in these simulations. 

For $R=2,3,4$, the solutions are displayed in Fig. \ref{fig:Sim_R2}.
In these cases, the system reaches the rotationally symmetric stationary-state
as time evolves. This is the expected behavior, since it was shown
in Section 4 that the radially symmetric solutions for $R=2,3,4$
are stable. Also, Fig. \ref{fig:Sim_R2} shows the comparison with
the corresponding solutions obtained by the collocation method introduced
in Section 3 (red line). Notice that both methods give the same result.
Moreover, Fig (\ref{fig:currents}) shows the magnitude of the current,
$\left|J\right|$, for these cases. Note that even for these radially
symmetric solutions, the current exhibits complicated behavior.

\begin{figure}[H]
\begin{centering}
\begin{tabular}{ccc}
\includegraphics[scale=0.3]{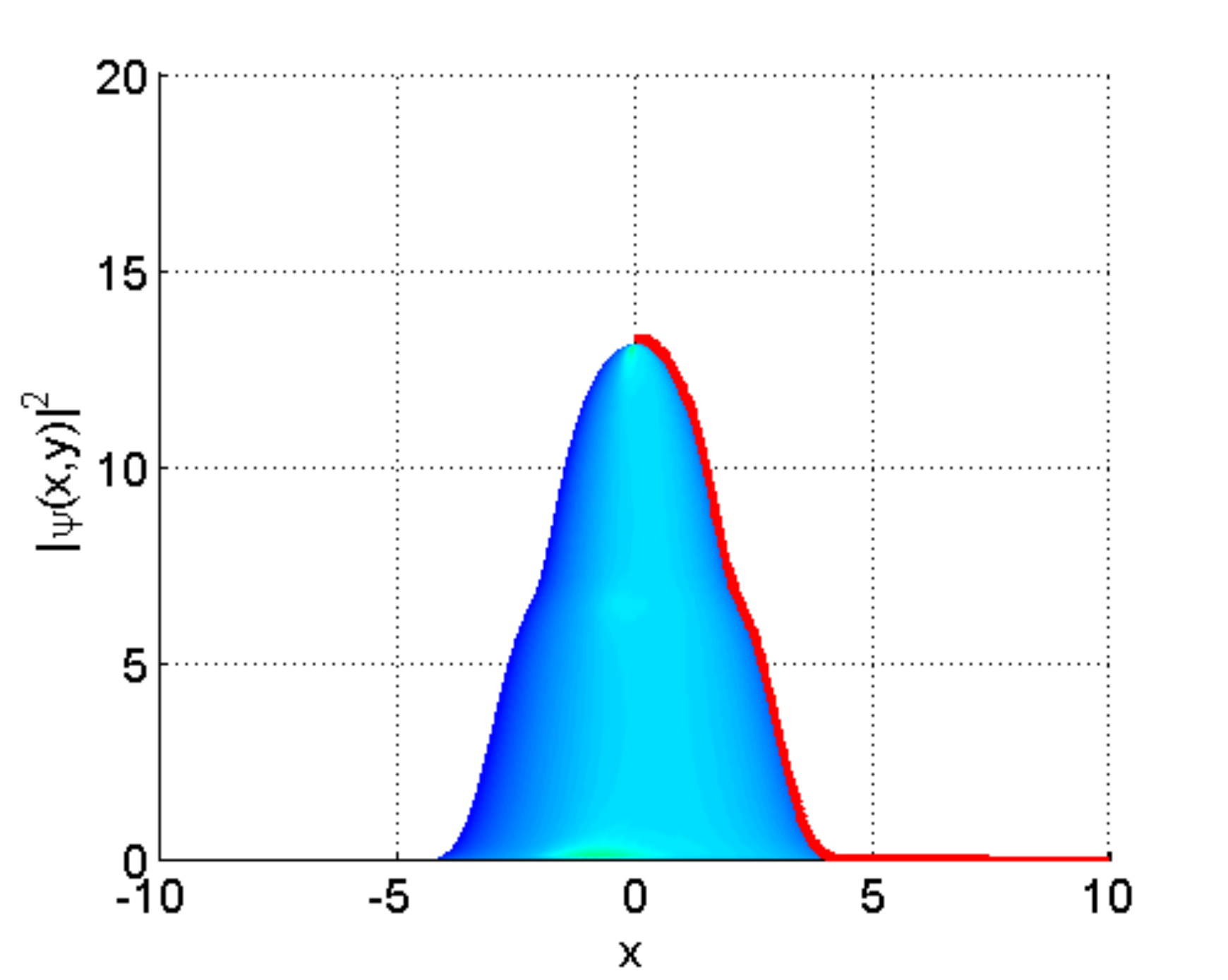} & \includegraphics[scale=0.3]{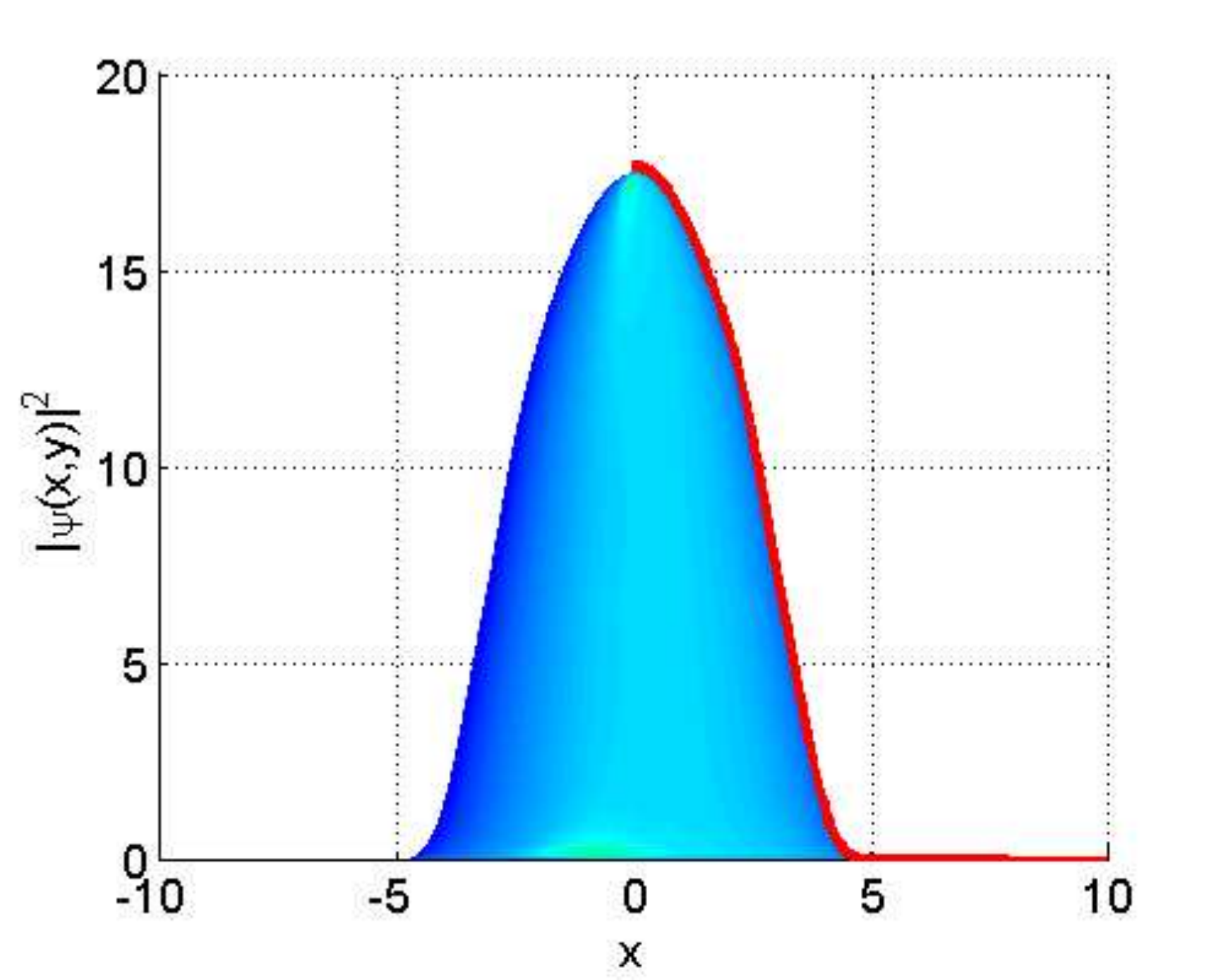} & \includegraphics[scale=0.3]{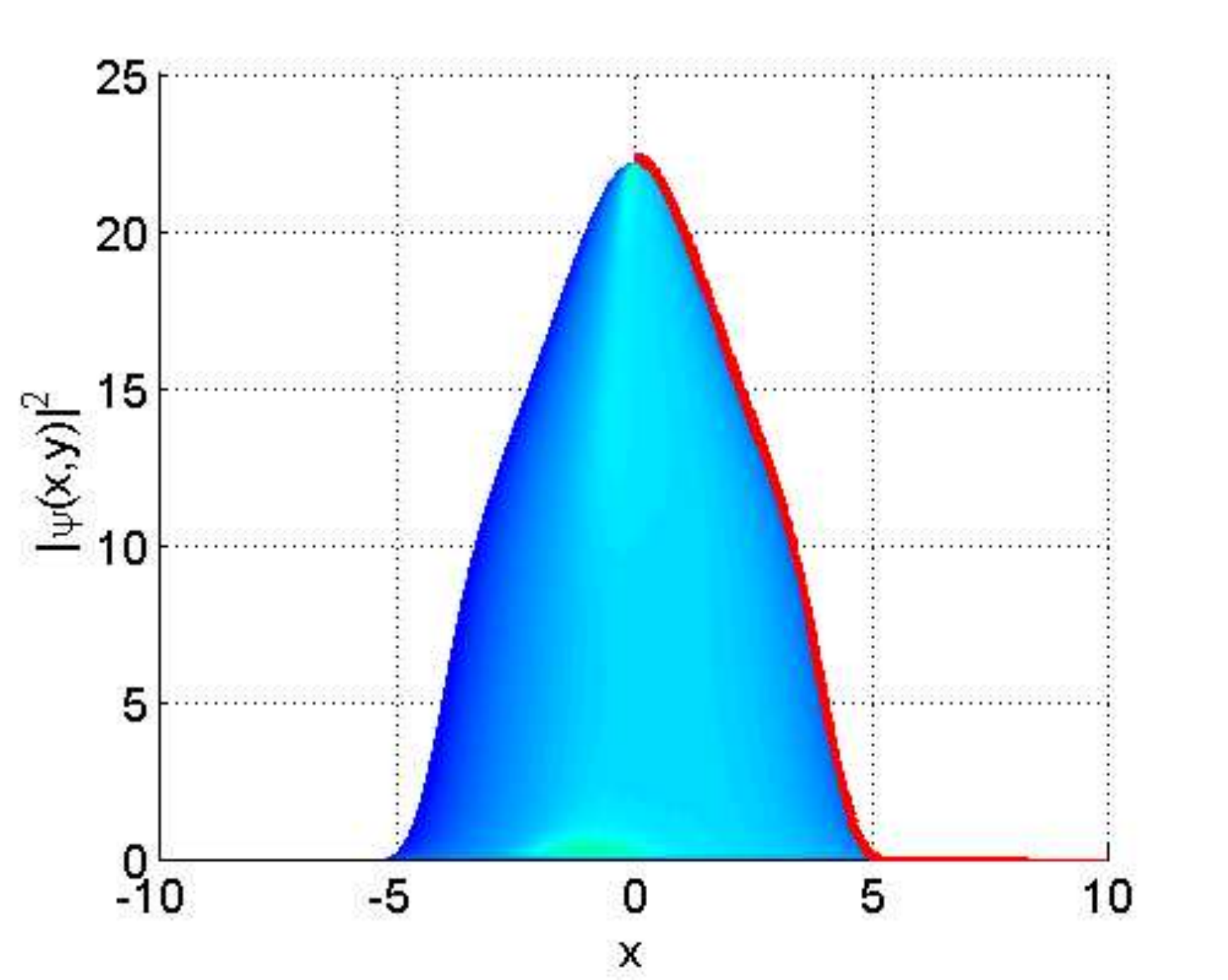}\tabularnewline
\multicolumn{3}{c}{}\tabularnewline
\end{tabular}
\par\end{centering}

\caption{Simulation results (density distributions, blue) and comparison with
the collocation method (red line) for $R=2$ (left), $R=3$ (center),
and $R=4$ (right). \label{fig:Sim_R2}}
 
\end{figure}

\begin{figure}[H]
\begin{centering}
\begin{tabular}{ccc}
\includegraphics[scale=0.35]{R2_current} & \includegraphics[scale=0.35]{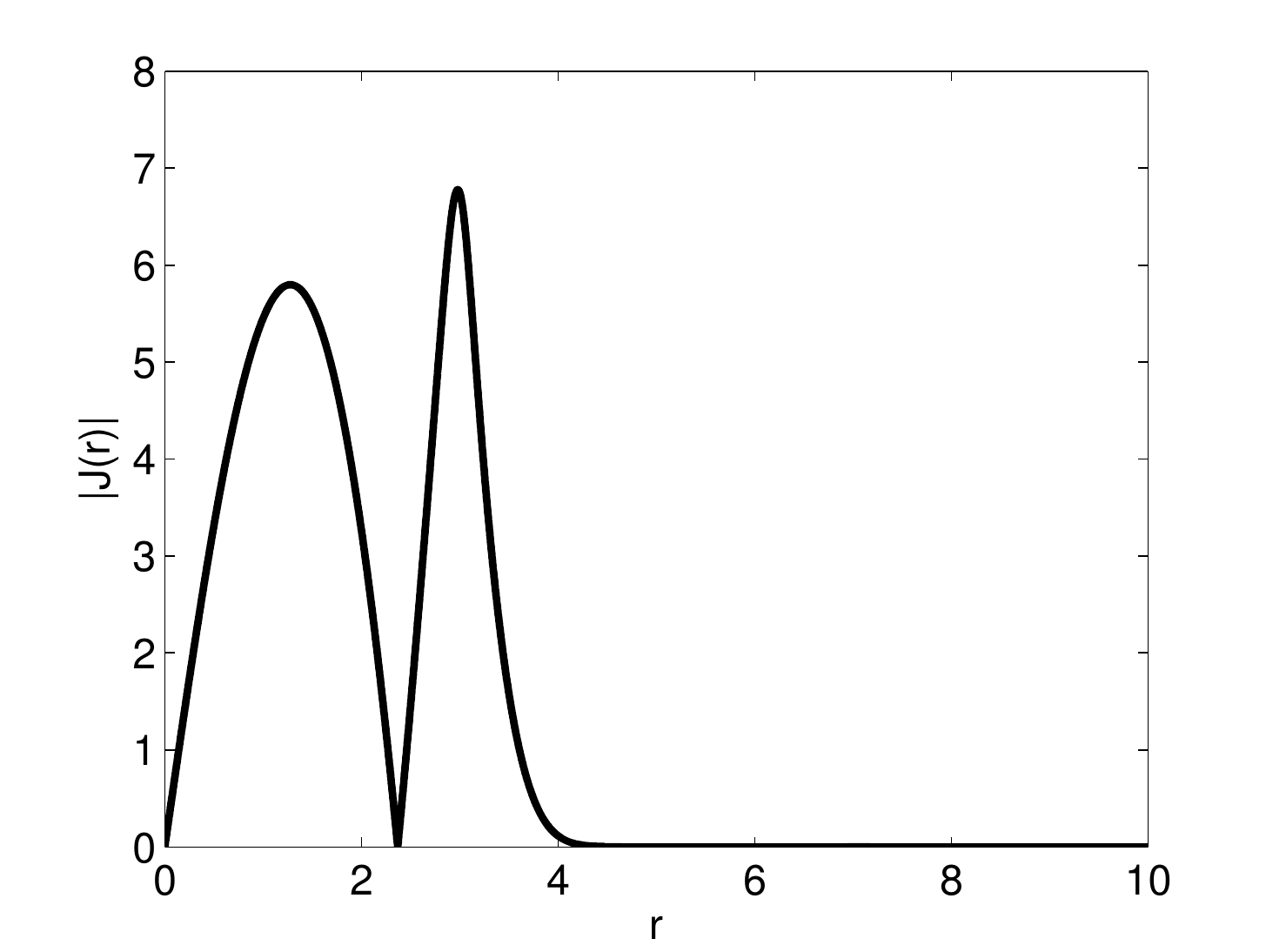} & \includegraphics[scale=0.35]{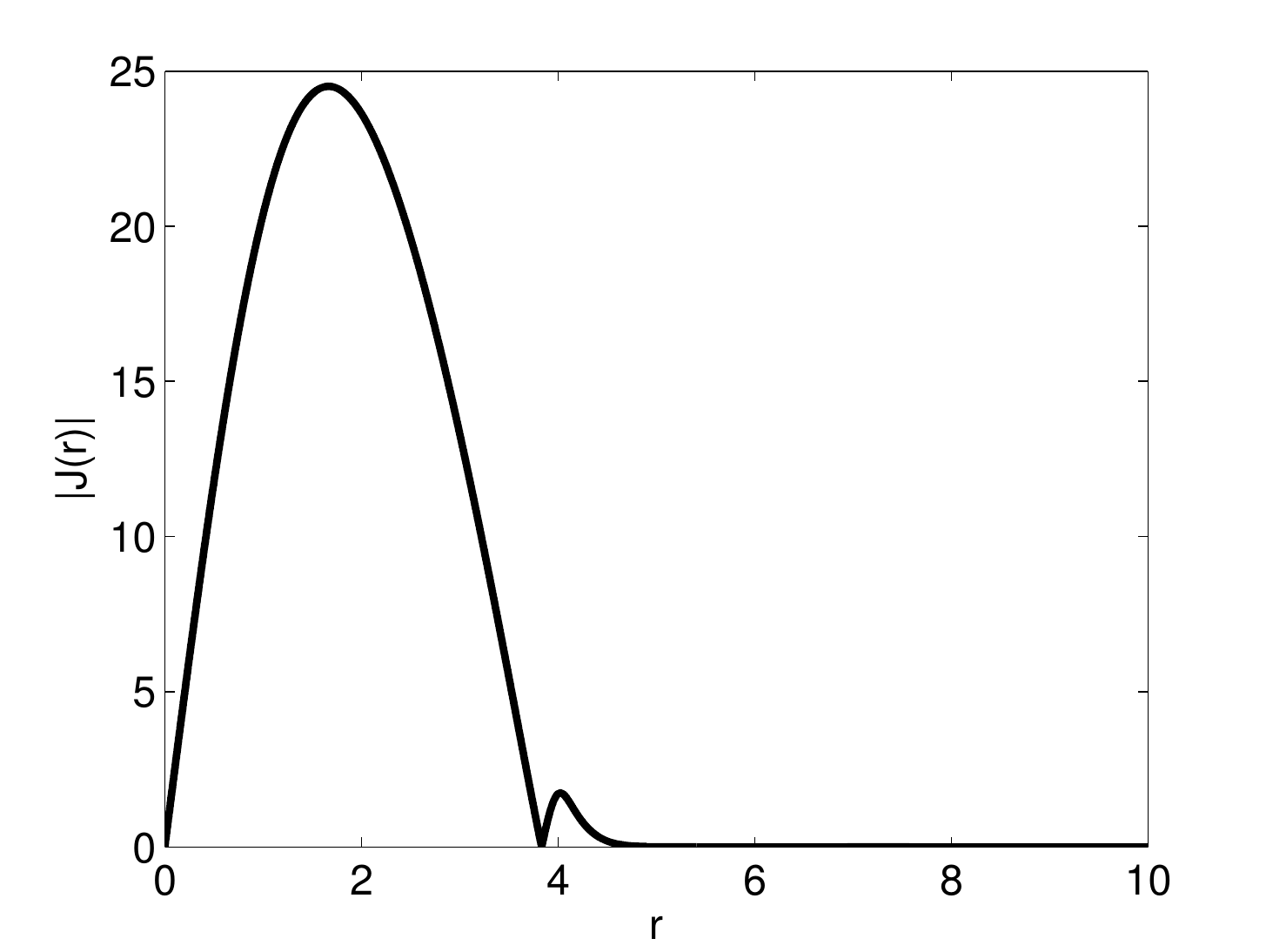}\tabularnewline
\multicolumn{3}{c}{}\tabularnewline
\end{tabular}
\par\end{centering}

\caption{Magnitude of the current $J$ corresponding to the solutions for $R=2$
(left), $R=3$ (center), and $R=4$ (right).\label{fig:currents}}
 
\end{figure}

For $R=5$, the solution becomes unstable, as predicted in Section
3. Fig. \ref{fig:Sim_R5a} shows the corresponding transient that,
after breaking the symmetry, leads to the emergence of vortex lattices.
These vortex lattices remain rotating at a constant angular velocity,
becoming the stable solution of the system. (This simulation is also
presented in the supplementary videos Sim1.avi and Sim2.avi.) Furthermore,
Figs. (\ref{fig:Vor_Core}) to (\ref{fig:vel1D}) show the characteristics
of these quantum vortices: the density profile drops to zero at the
center of the vortex core, as shown in Fig. \ref{fig:Vor_Core}; the
phase difference around the vortex is a multiple of $2\pi$, as seen
in Fig. \ref{fig:Phase} (left), where there is a change in the phase
from $0$ to $2\pi$ in all the vortices (indicated with blue circles);
the condensate circulates around the vortex, as depicted in Fig. \ref{fig:Vel_grad},
where the current $J$ is plotted for a central vortex (left) and
a satellite vortex (right). Finally, Fig. (\ref{fig:vel1D}) shows
the plot of $\left|\nabla\theta\right|$, the magnitude of the gradient
of the phase, which is proportional to the velocity of the condensate.
Notice that $\left|\nabla\theta\right|\approx1/r$ around the core
of the vortices. Finally, Fig. (\ref{fig:Sim_diff_rads}) shows the
results for $R=6,7,8,9$.

\begin{figure}[H]
\begin{centering}
\begin{tabular}{cc}
\includegraphics[width=77mm,height=70mm]{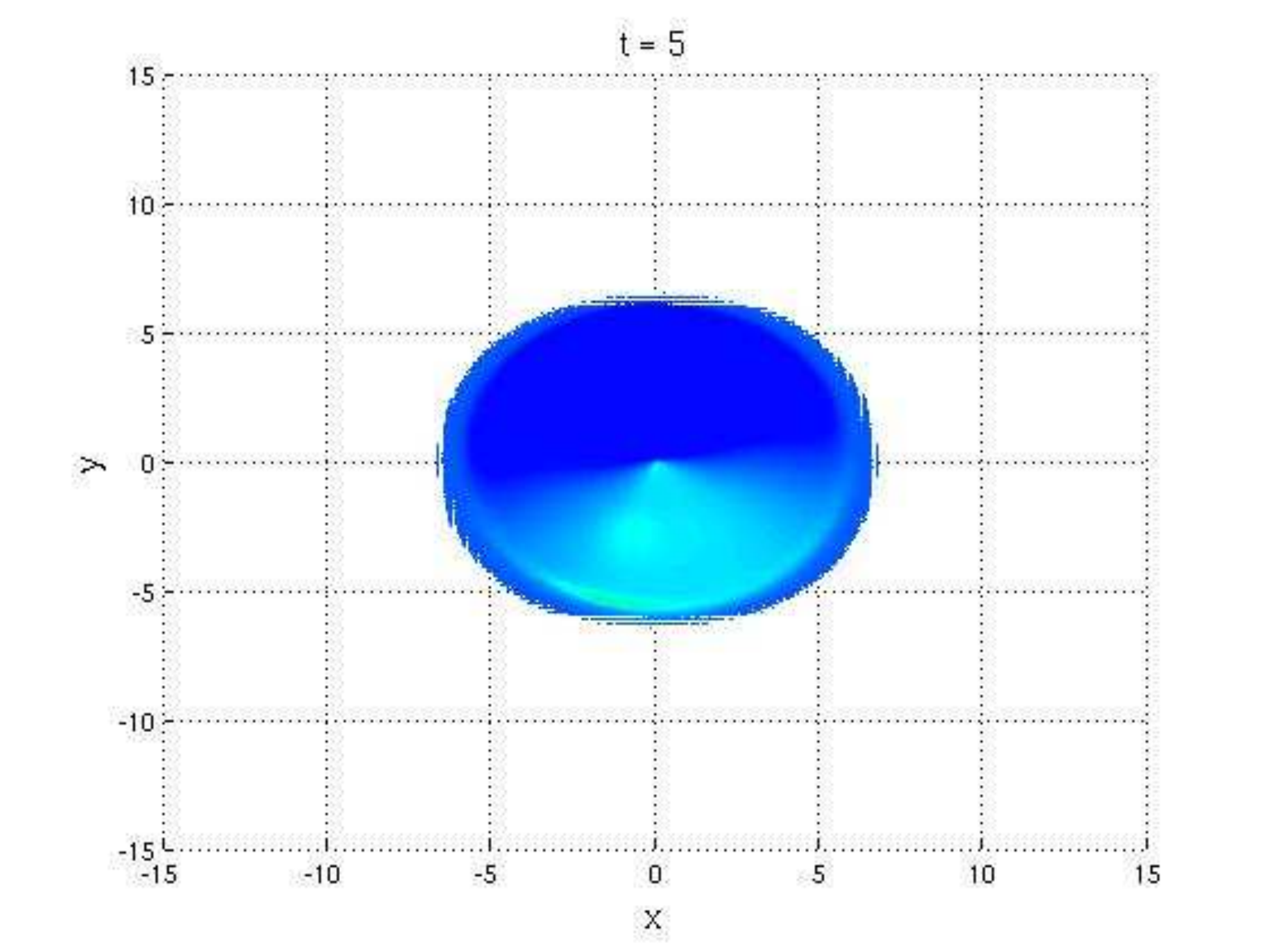} & \includegraphics[width=77mm,height=70mm]{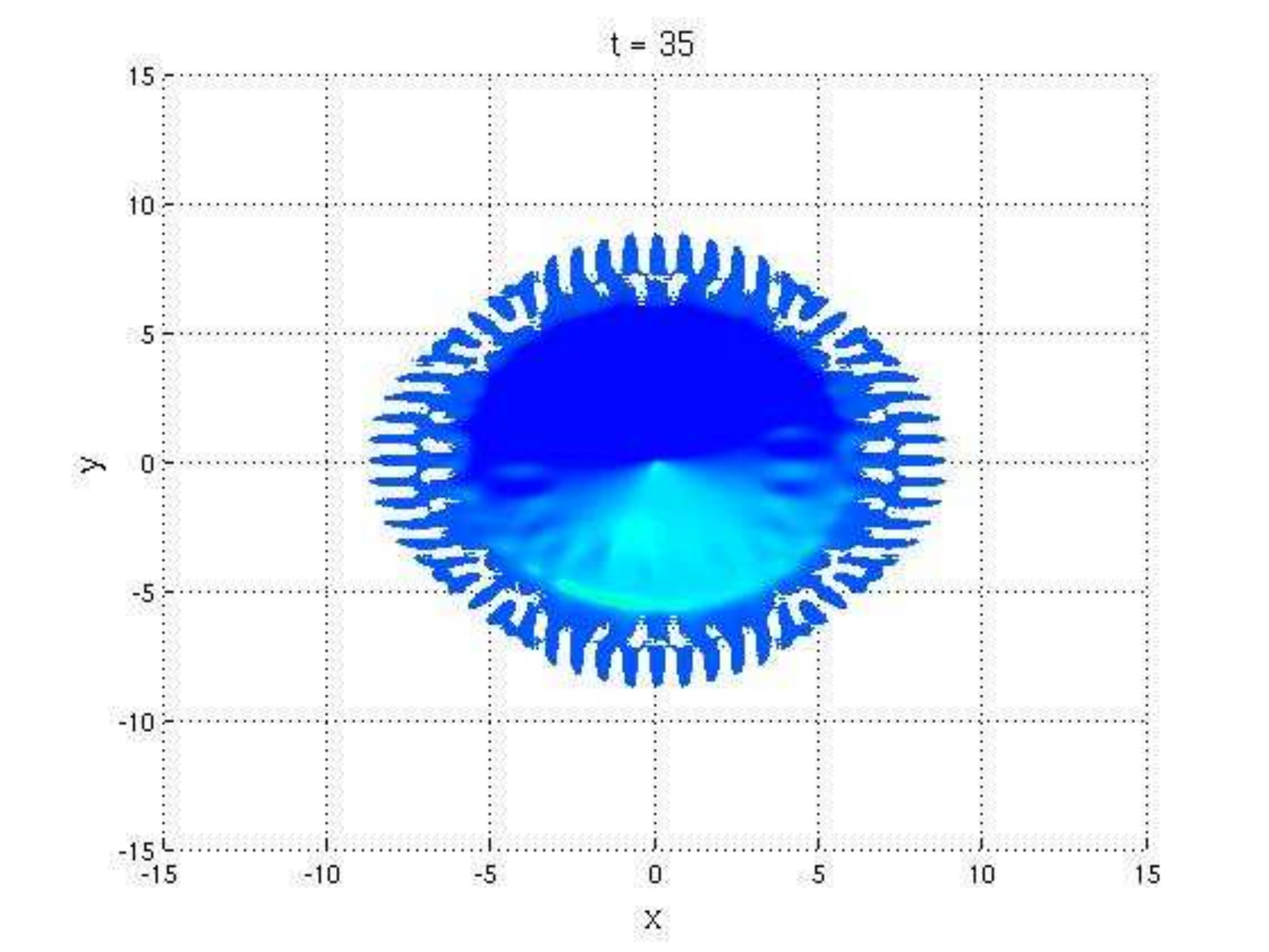}\tabularnewline
\end{tabular}
\par\end{centering}

\caption{Simulation results for $R=5$ (see also the supplementary videos Sim1.avi
and Sim2.avi). The plots show the density distribution at different
times (continued in Fig. (\ref{fig:Sim_R5b})). The white areas represent
regions where the density becomes zero. At the beginning, the condensate
is confined in a radius approximately equal to 5. After breaking of
symmetry, the condensate cloud experiences an expansion, and the remnants
of this expansion can be seen at $R\gtrsim5$. Notice that the white
dots inside the region $R\lesssim5$ at $t=70$ correspond to the
vortices that will eventually form the stable lattice. Some of these
vortices are expelled out of the structure during the transient evolution.
\label{fig:Sim_R5a}}

\end{figure}

\begin{figure}[H]
\begin{centering}
\begin{tabular}{cc}
\includegraphics[width=77mm,height=70mm]{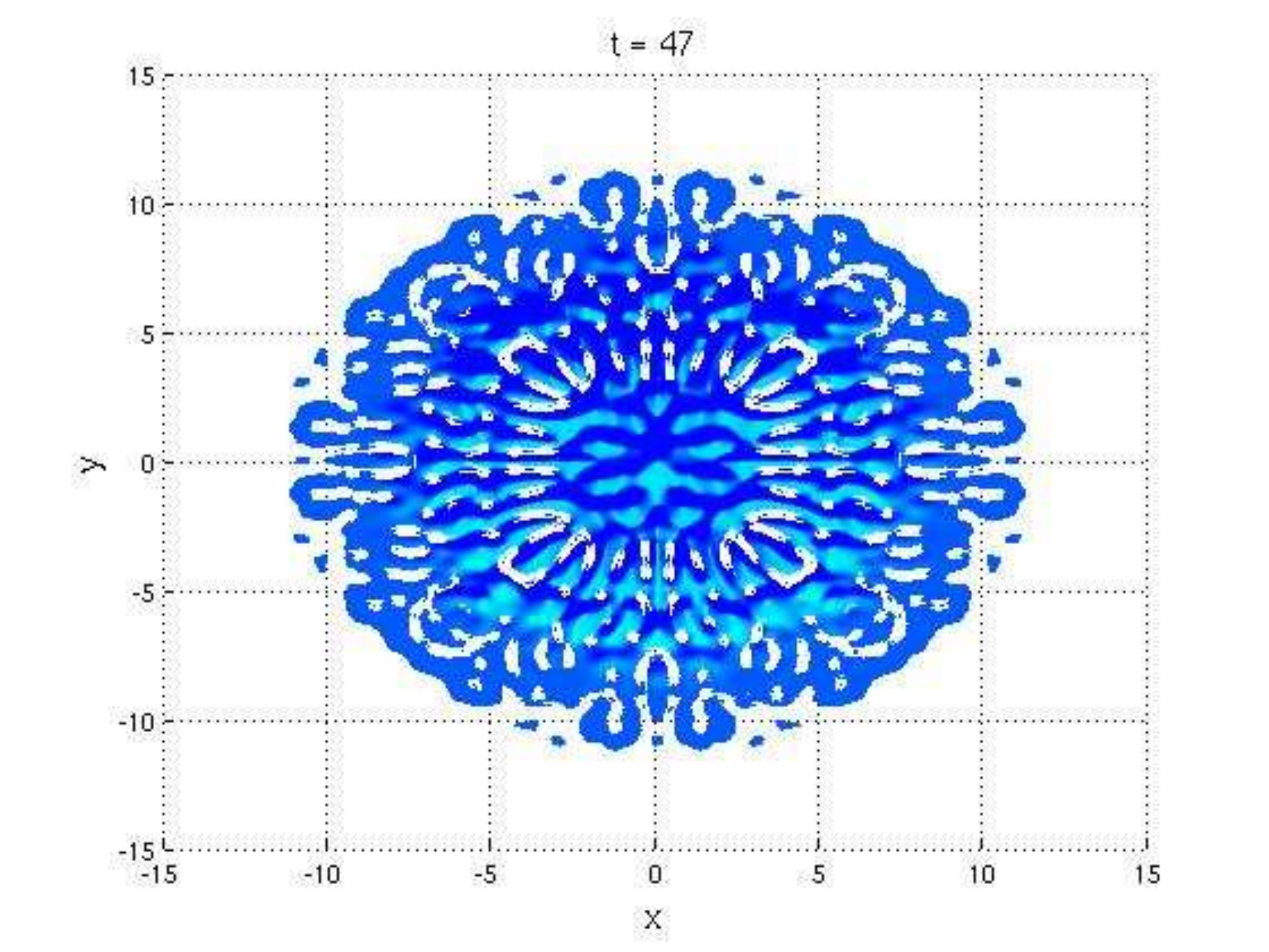} & \includegraphics[width=77mm,height=70mm]{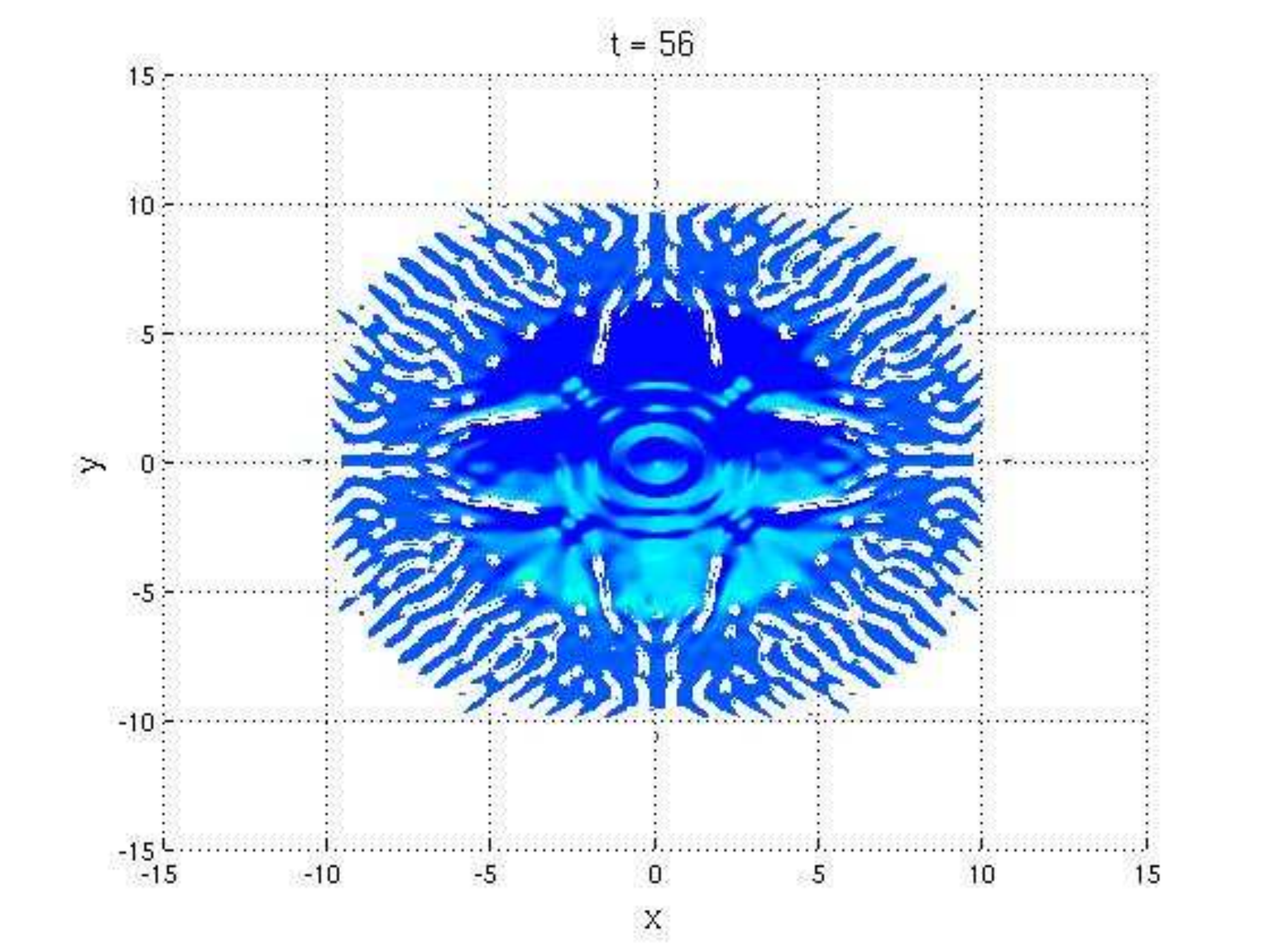}\tabularnewline
\includegraphics[width=77mm,height=70mm]{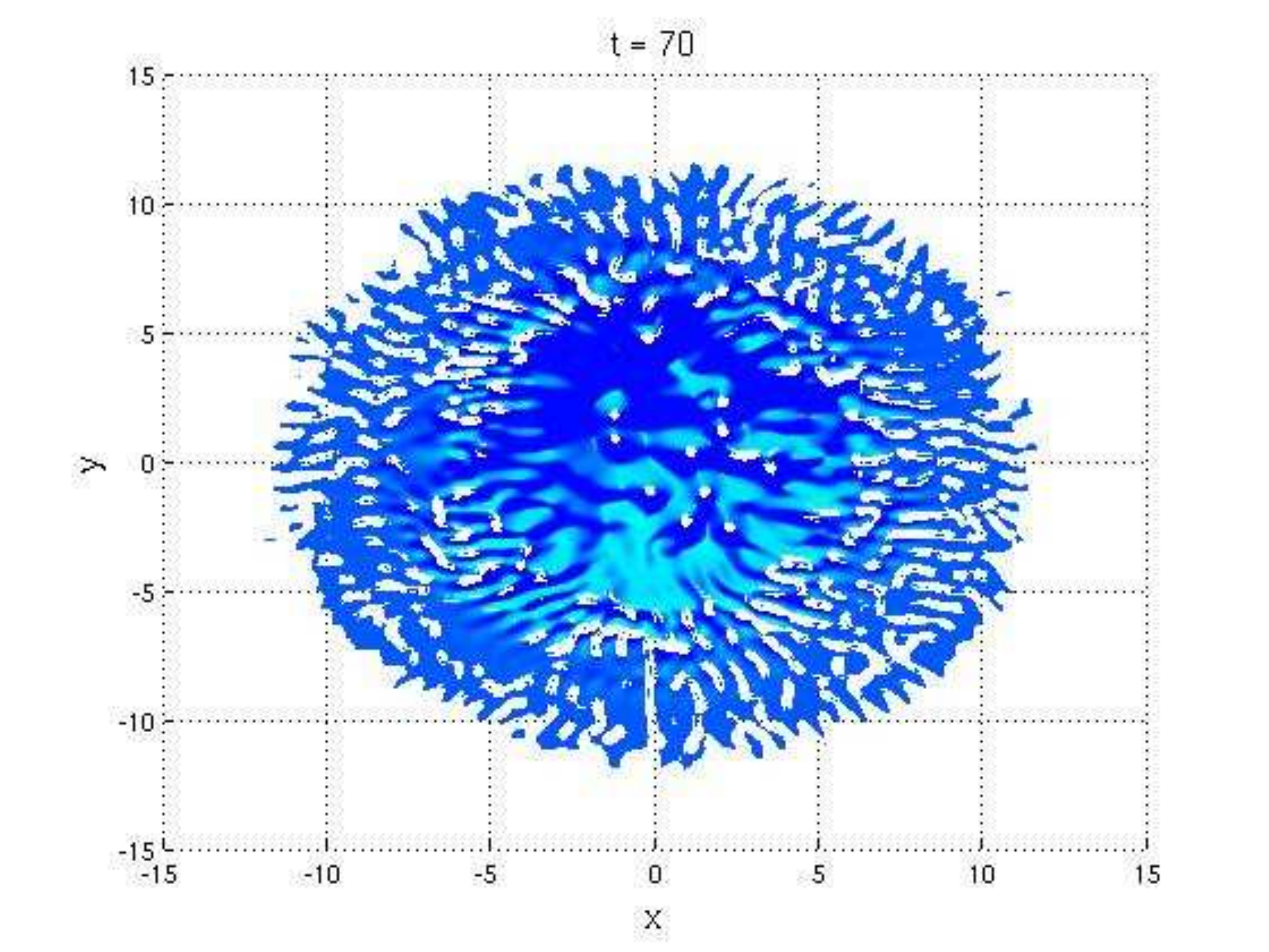} & \includegraphics[width=77mm,height=70mm]{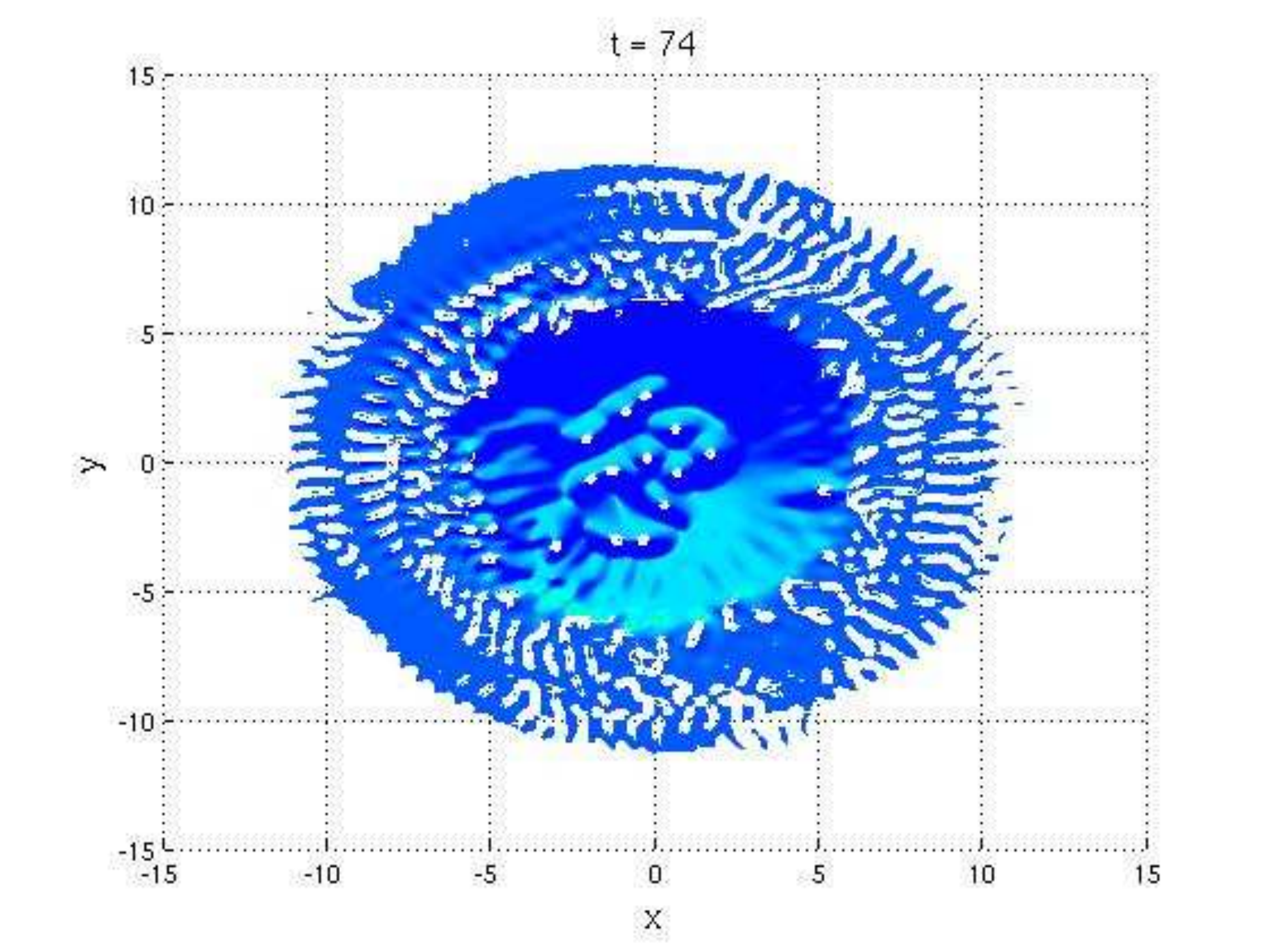}\tabularnewline
\includegraphics[width=77mm,height=70mm]{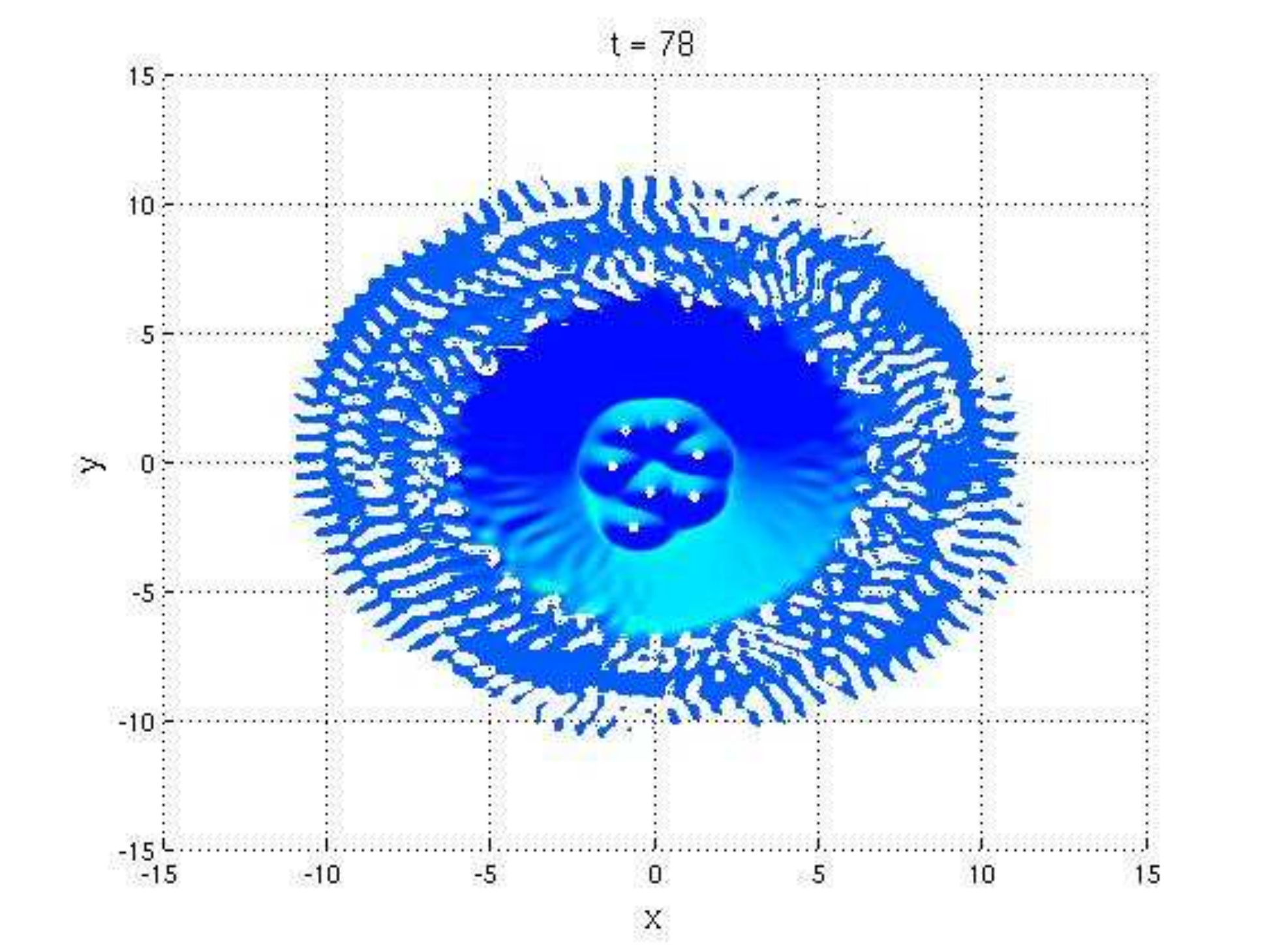} & \includegraphics[width=77mm,height=70mm]{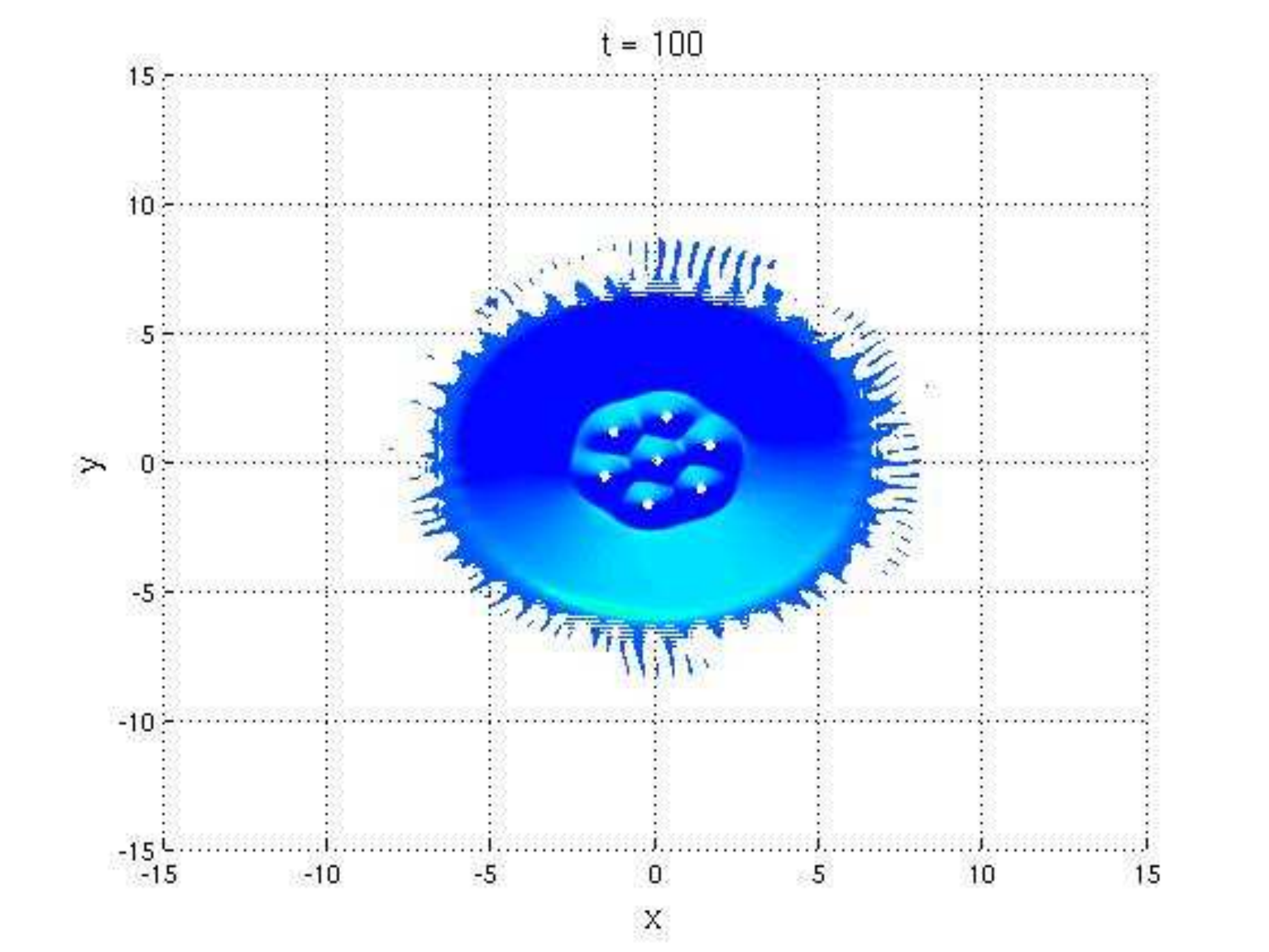}\tabularnewline
\end{tabular}
\par\end{centering}

\caption{Continuation of Fig. (\ref{fig:Sim_R5a}).\label{fig:Sim_R5b}}

\end{figure}

\begin{figure}[H]
\begin{centering}
\includegraphics[scale=0.25]{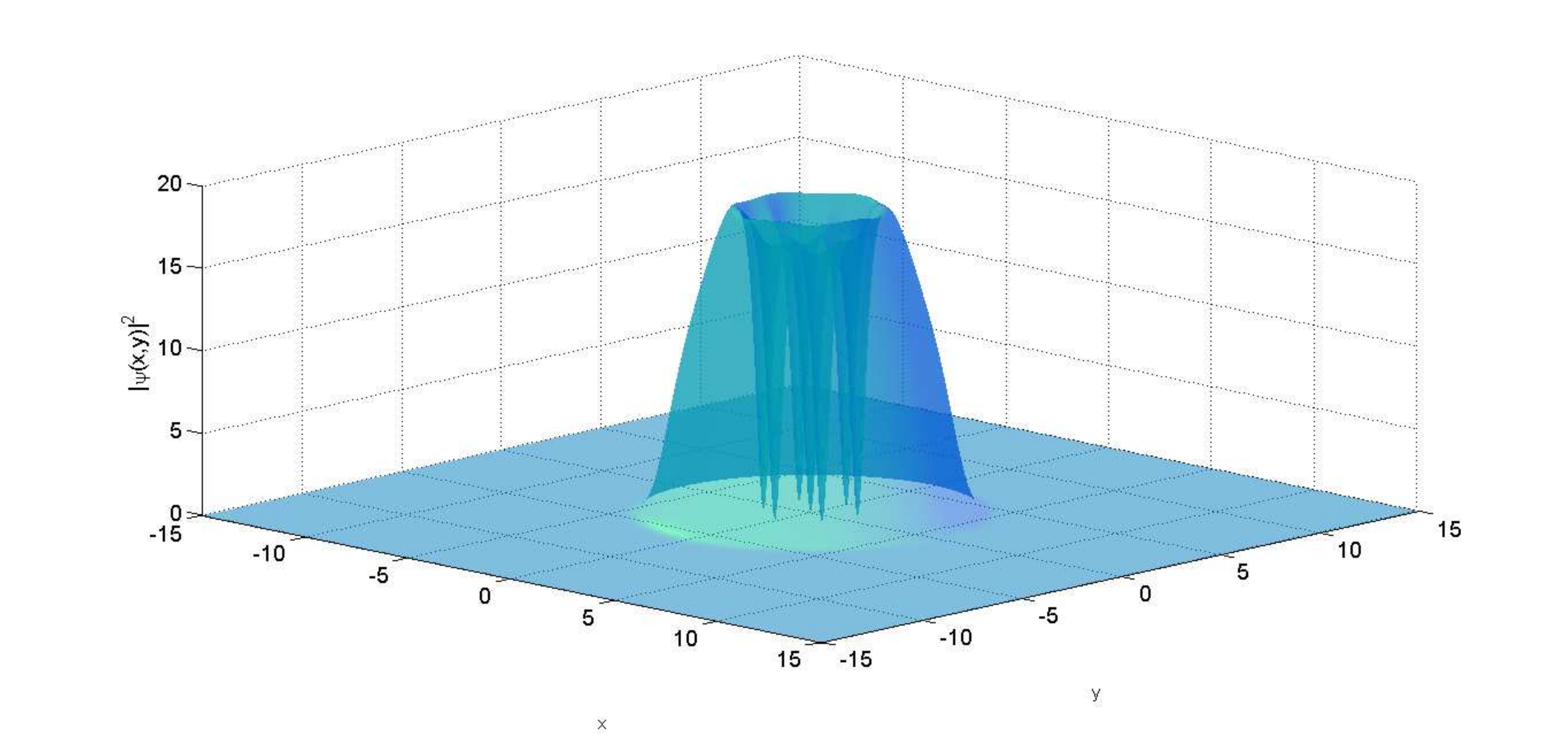}
\par\end{centering}

\caption{A transparent image corresponding to the density distribution at $t=140$
($R=5$).\label{fig:Vor_Core}}
 
\end{figure}

\begin{figure}[H]
\begin{centering}
\begin{tabular}{cc}
\includegraphics[scale=0.4]{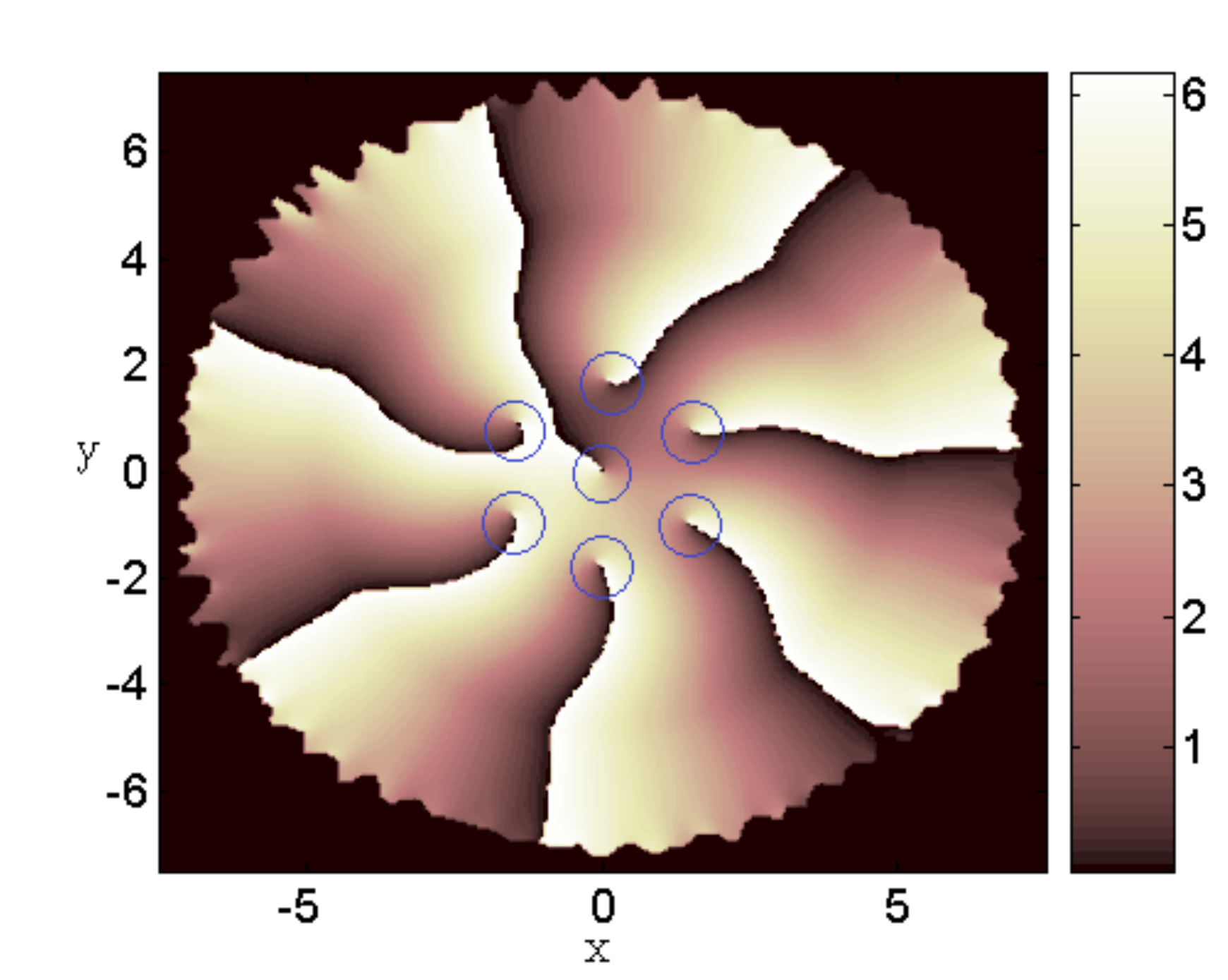} & \includegraphics[scale=0.4]{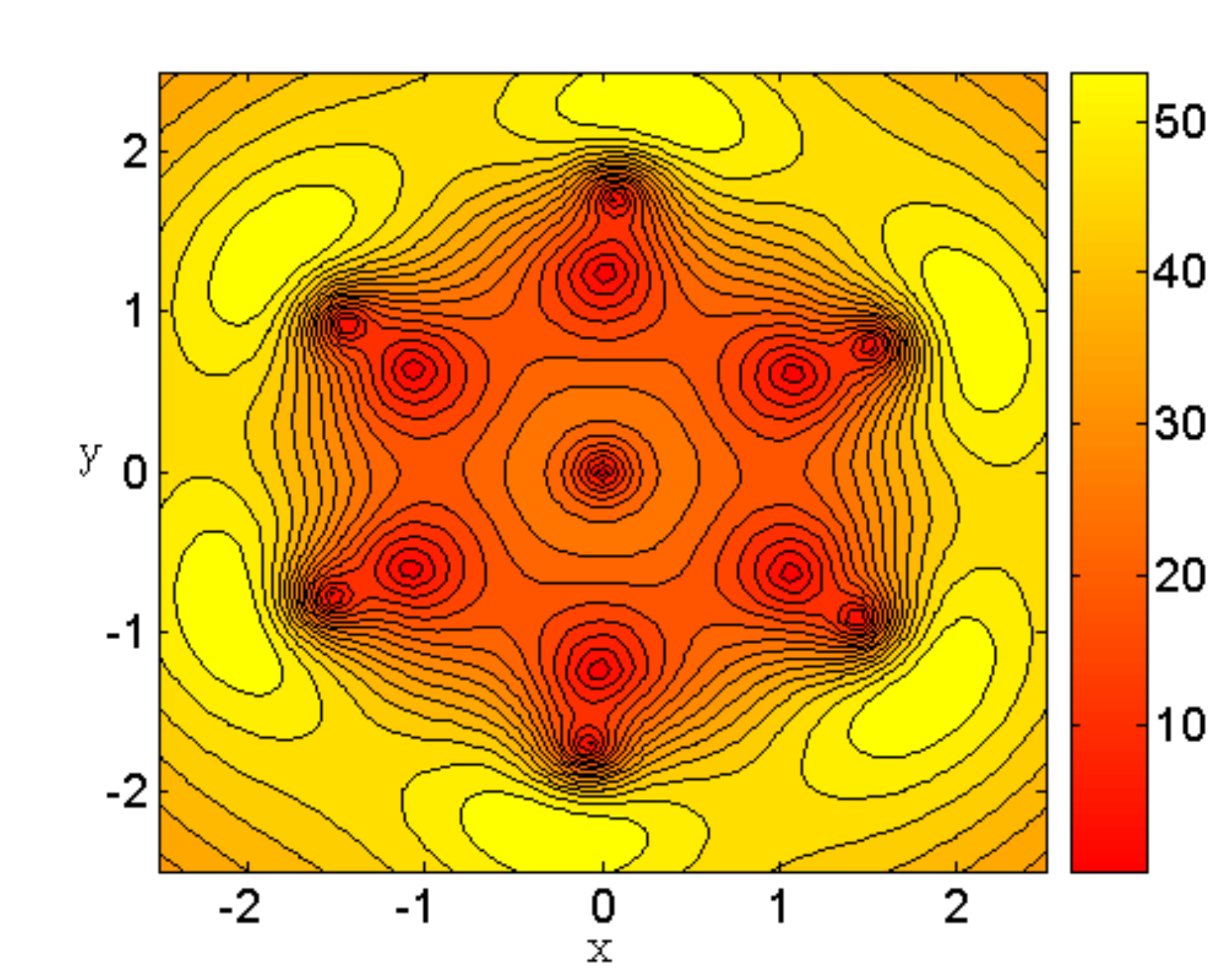}\tabularnewline
\end{tabular}
\par\end{centering}

\caption{Left: Phase $\theta(x,y)$ of the solution at $t=140$ ($R=5$), with
the blue circles indicating the core of the vortices. Right: The magnitude
of the current$\left|J\left(x,y\right)\right|$ at $t=140$. Notice
that the contour lines are closer around the core of the vortices.
\label{fig:Phase}}
 
\end{figure}

\begin{figure}[H]
\begin{centering}
\begin{tabular}{cc}
\includegraphics[scale=0.4]{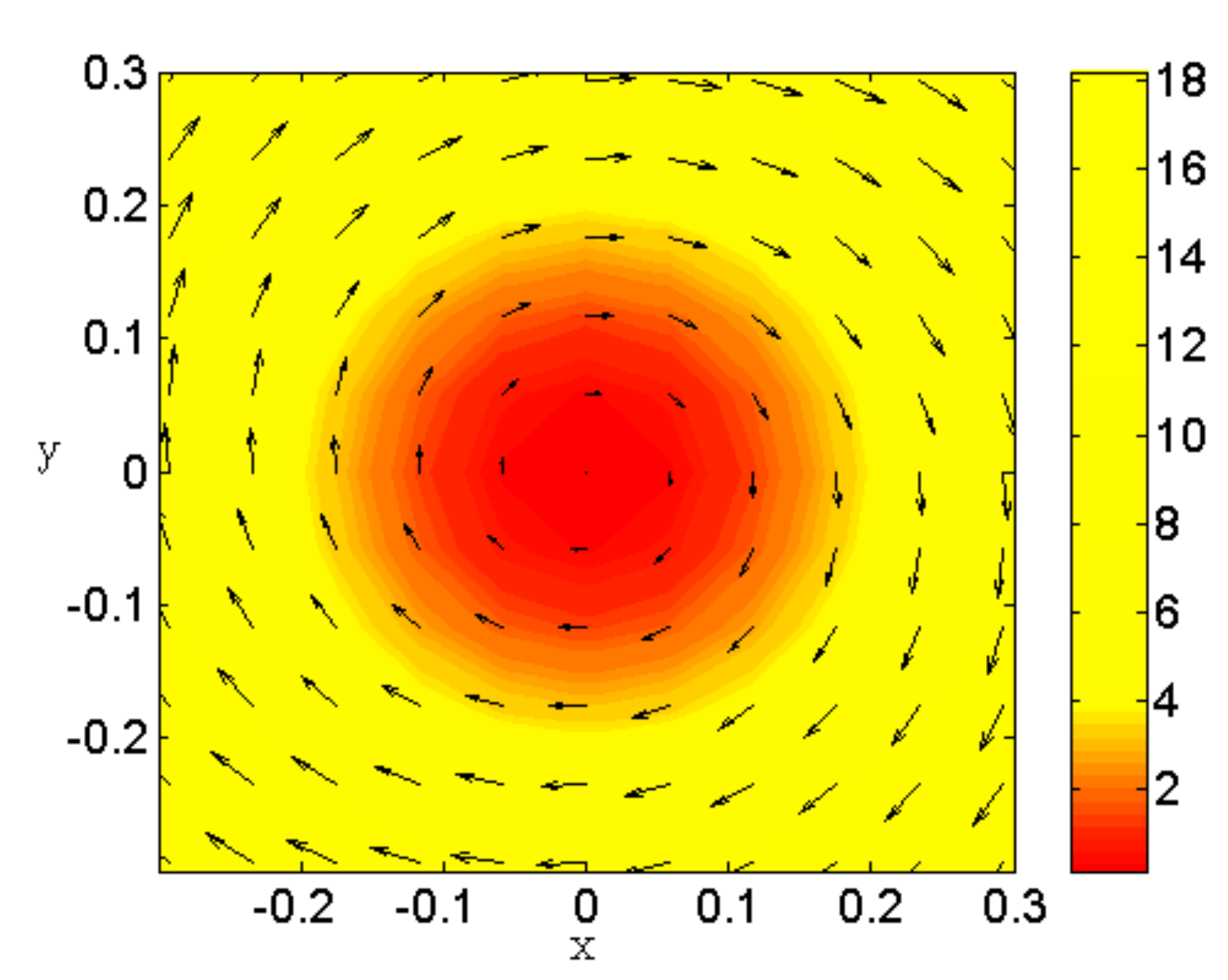} & \includegraphics[scale=0.4]{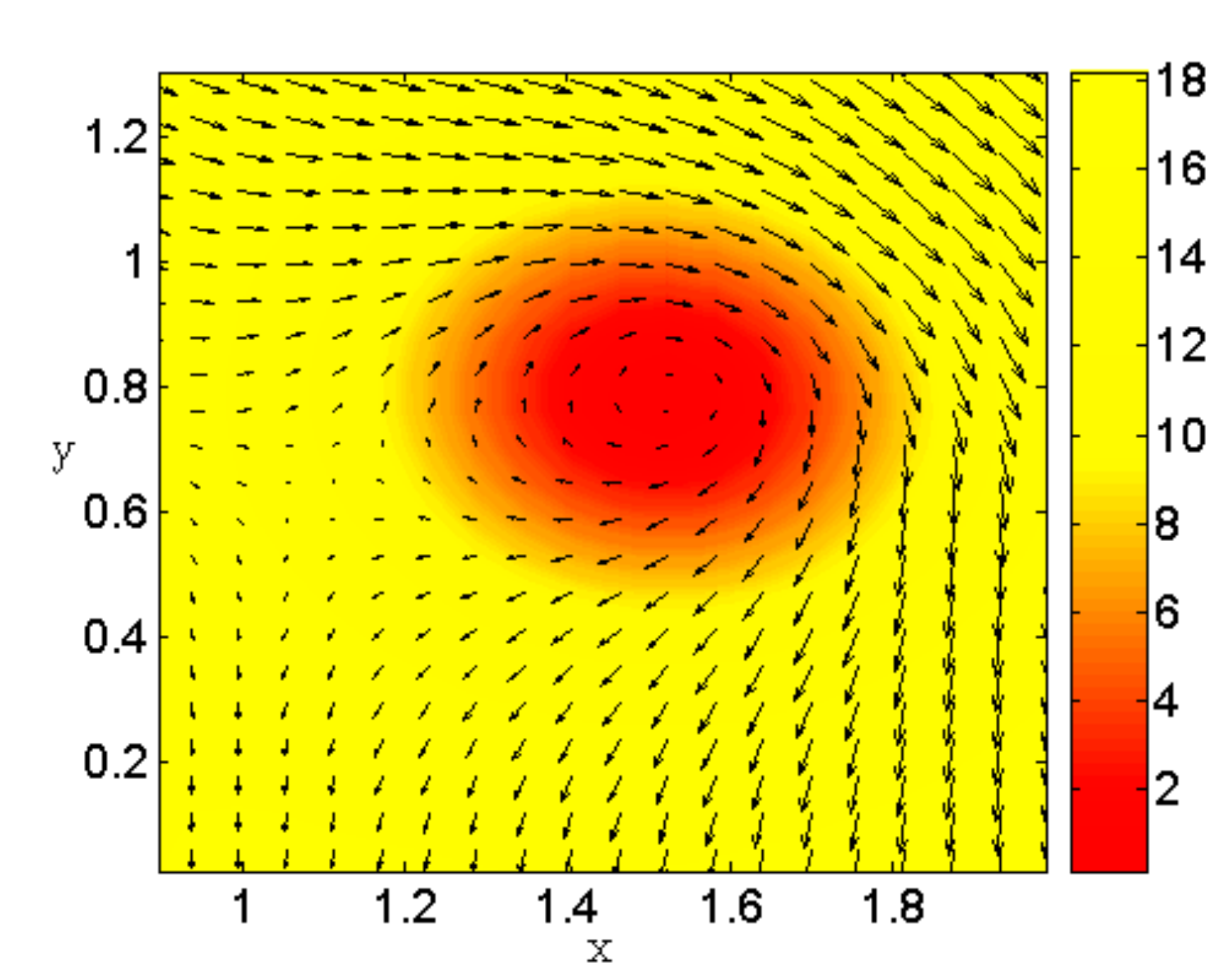}\tabularnewline
\end{tabular}
\par\end{centering}

\caption{Density distribution and arrows corresponding to the current $J$
in the central vortex (left) and one satellite vortex (right) of the
solution at $t=140$ ($R=5$).\label{fig:Vel_grad}}
 
\end{figure}

\begin{figure}[H]
\begin{centering}
\begin{tabular}{cc}
\includegraphics[scale=0.5]{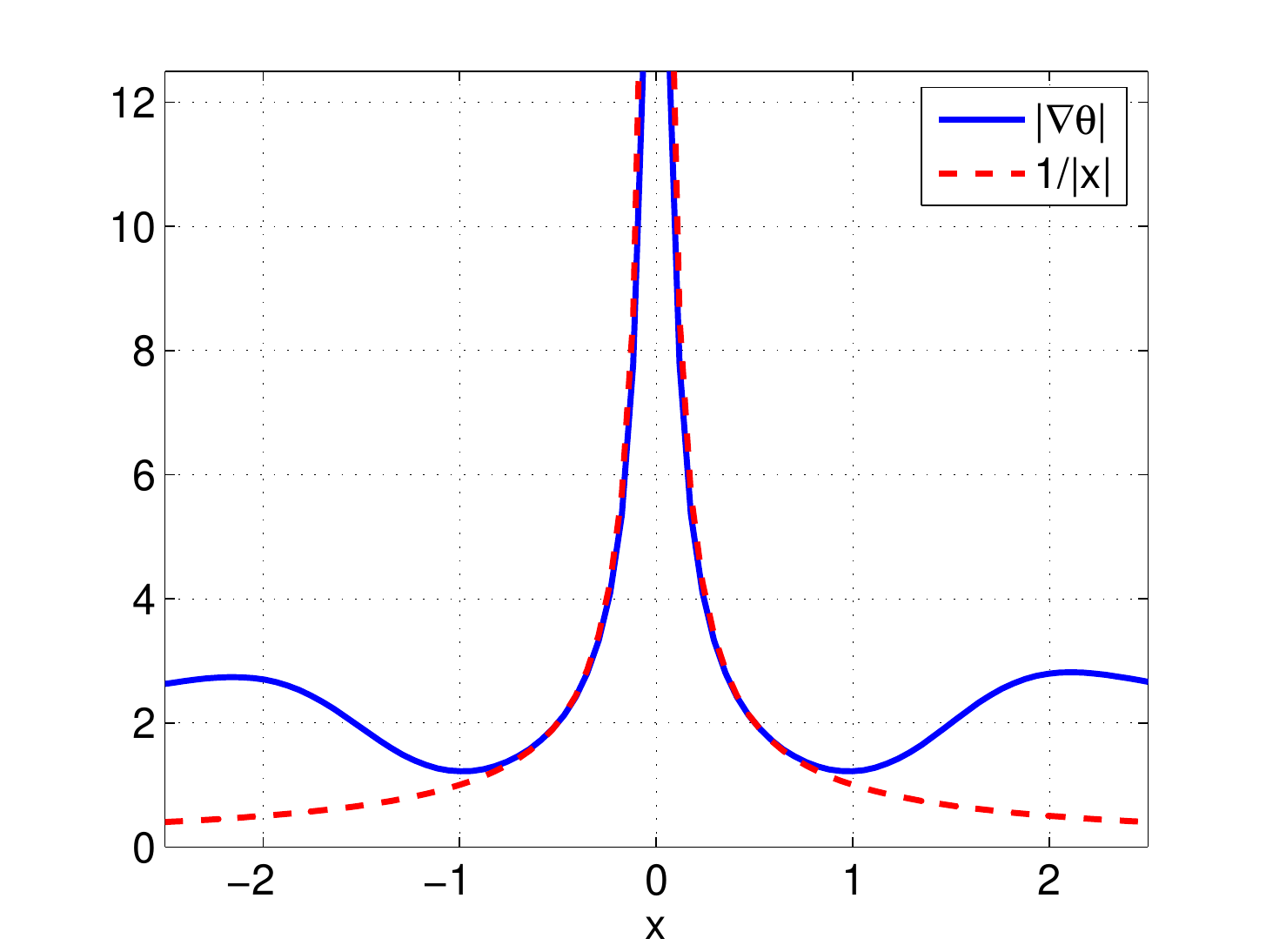} & \includegraphics[scale=0.37]{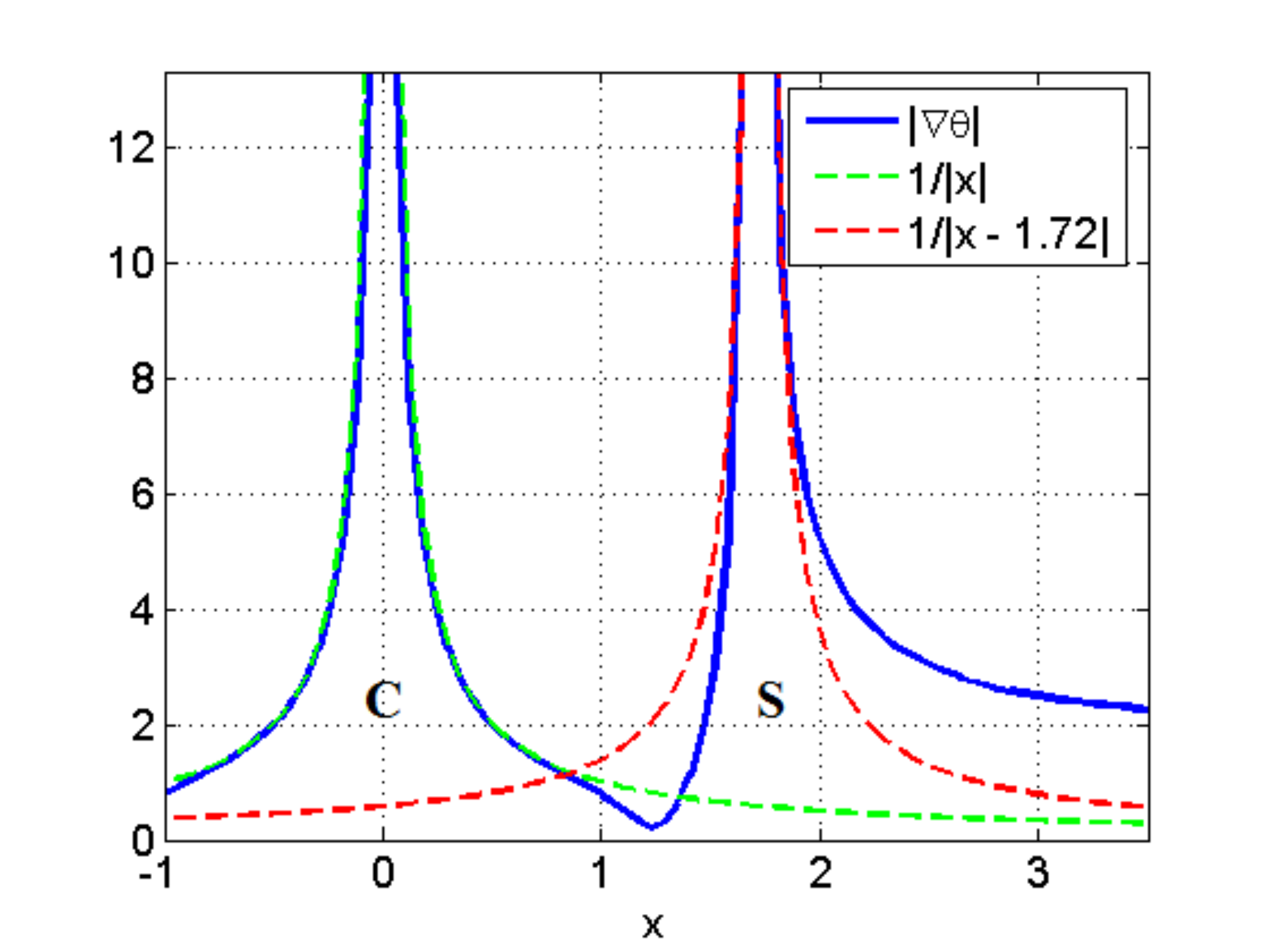}\tabularnewline
\end{tabular}
\par\end{centering}

\caption{1D plot for $\left|\nabla\theta\right|$ ($R=5$): central vortex
(left); central vortex ``C'' and satellite vortex ``S'' (right).
For both central and satellite vortices, $\left|\nabla\theta\right|\approx1/\left|x\right|=1/r$
around the cores.\label{fig:vel1D}}
 
\end{figure}

\begin{figure}[H]
\begin{centering}
\begin{tabular}{cc}
\includegraphics[width=72mm,height=65mm]{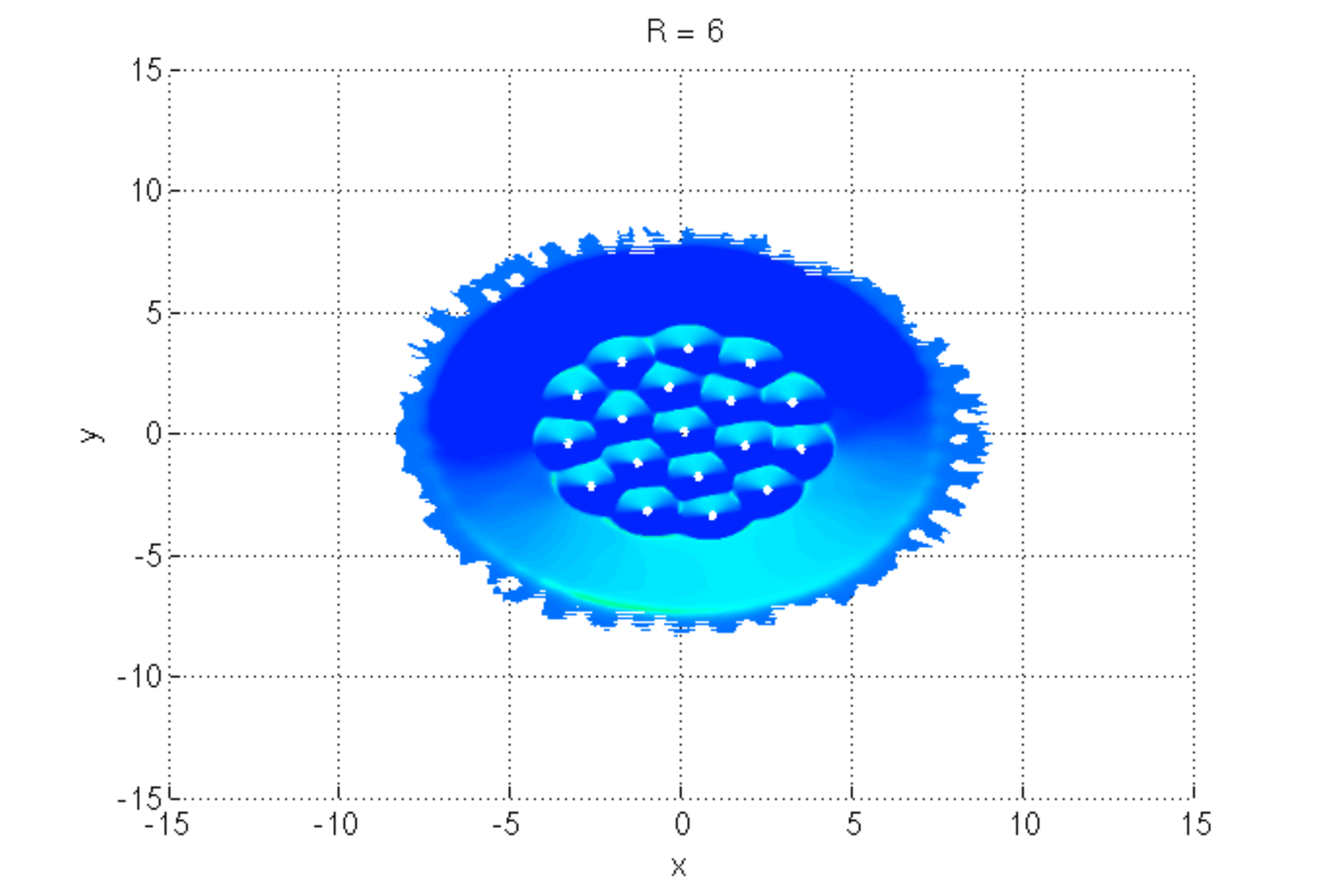} & \includegraphics[width=72mm,height=65mm]{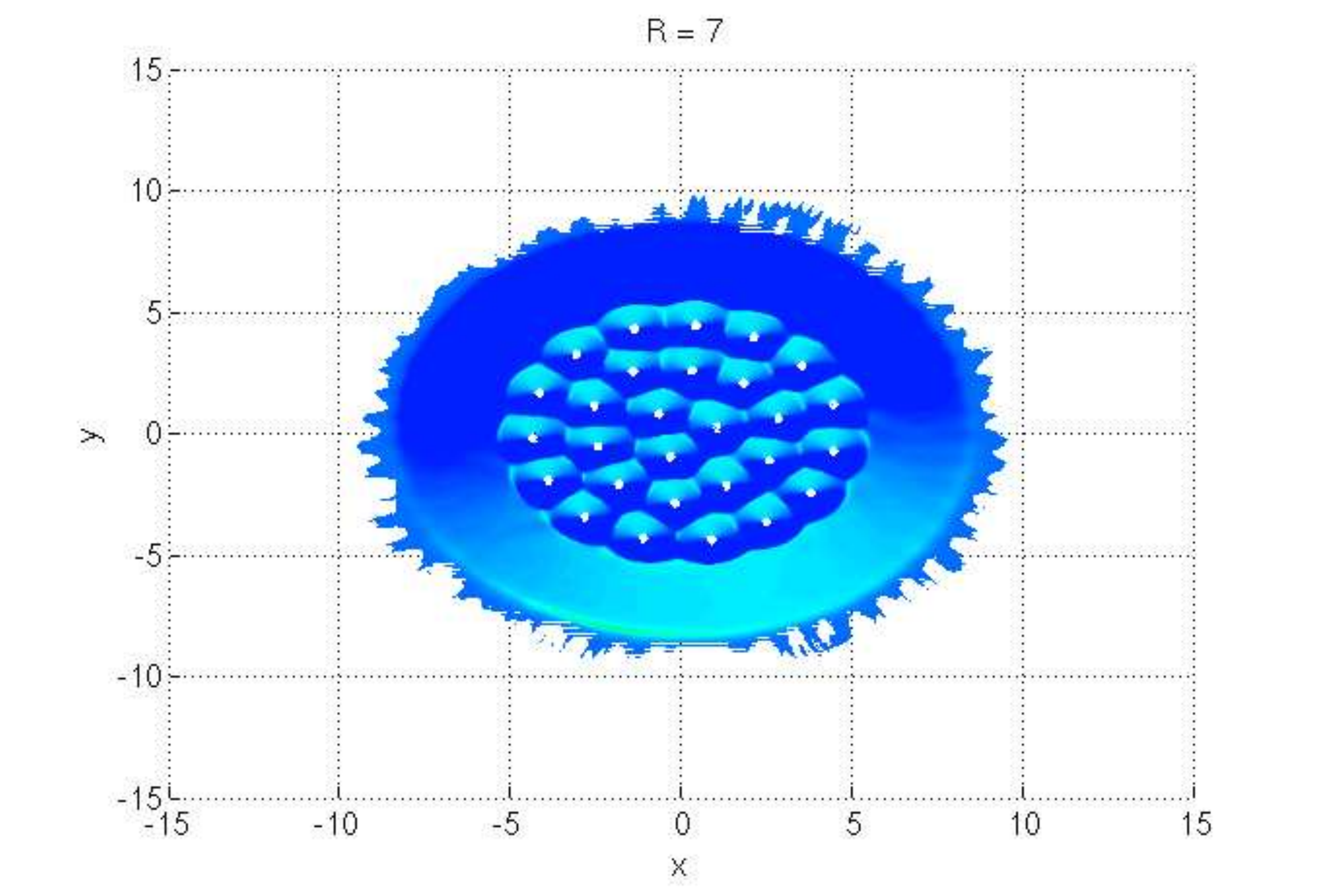}\tabularnewline
\includegraphics[width=72mm,height=65mm]{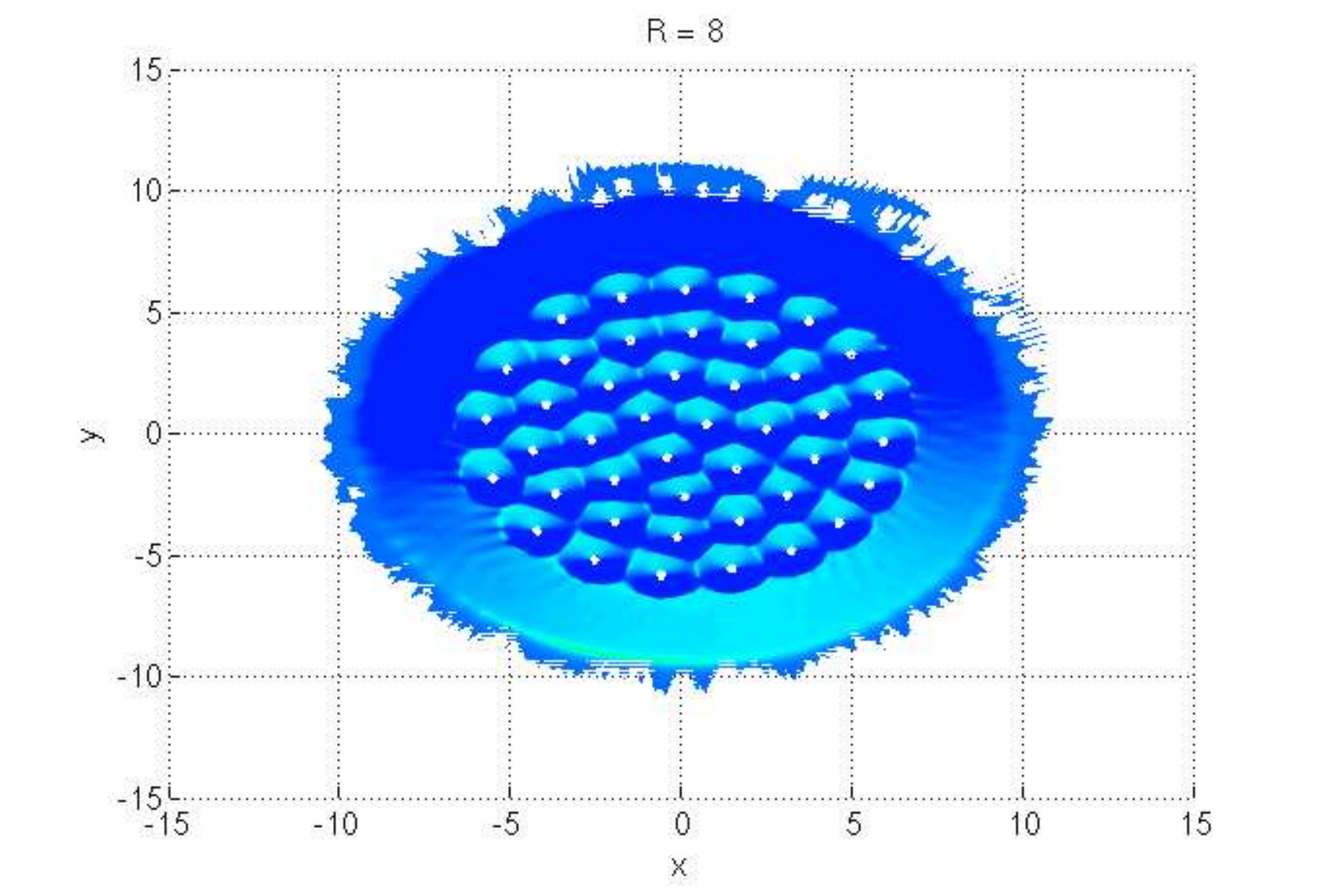} & \includegraphics[width=72mm,height=65mm]{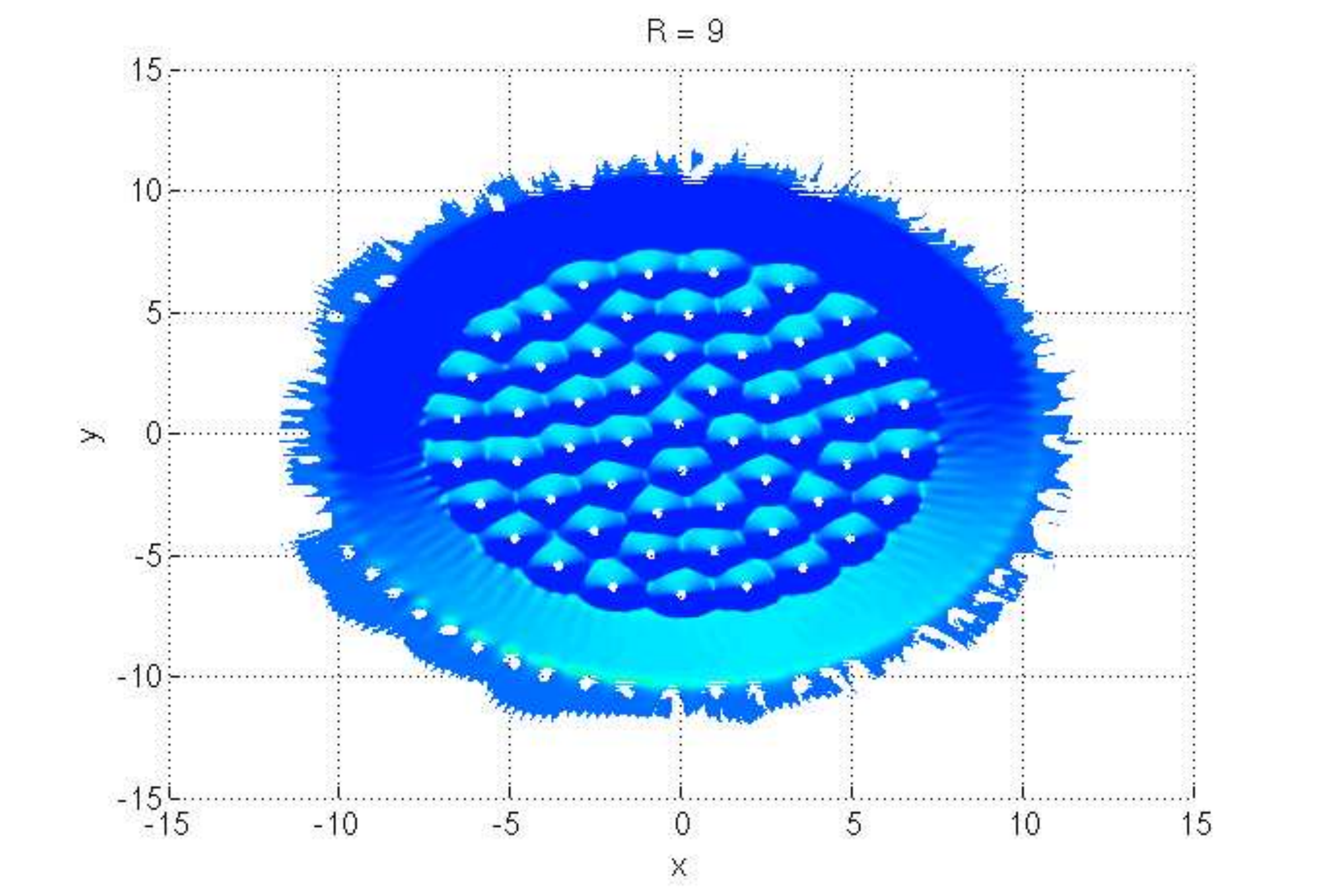}\tabularnewline
\end{tabular}
\par\end{centering}

\caption{Vortex lattices for $\alpha=4.4$, $\sigma=0.3$ and different values
of $R$.\label{fig:Sim_diff_rads}}
\end{figure}

\begin{figure}[H]
\begin{centering}
\includegraphics[scale=0.6]{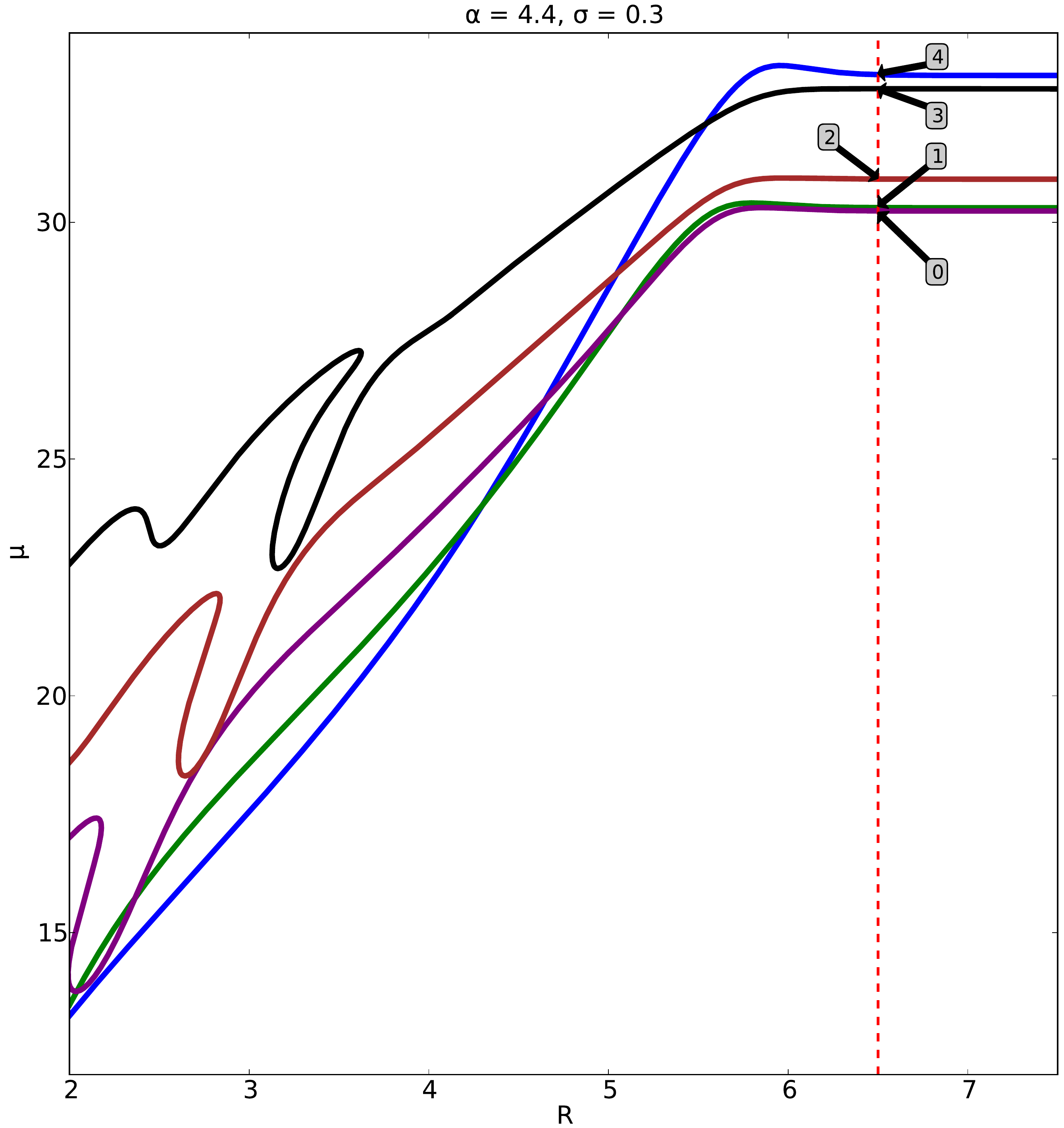}
\par\end{centering}

\caption{Continuation in $R$ starting from the solutions displayed in Figs.
\ref{fig:Ex_states} and \ref{fig:Cont_ex_states}. The density profiles
corresponding to the points labeled 0 to 4 are shown in Fig \ref{fig:Cont_R_profs}.\label{fig:Cont_R}}
\end{figure}

\subsection{Numerical continuation in $R$}

Figure (\ref{fig:Cont_R}) shows the continuation in $R$ starting
from the solutions displayed in Figs. (\ref{fig:Ex_states}) and (\ref{fig:Cont_ex_states}).
It can be seen that for all the branches, the solutions remain unchanged
for large values of $R$. Moreover, the blue branch, which corresponds
to the branch studied in the linear stability analysis performed in
Section 4, has the smallest $\mu$ up to $R\simeq4.4$. This is also
the point at which the branch becomes linearly unstable, as shown
in Fig. (\ref{fig:stab_curve}). On the other hand, Fig. (\ref{fig:Cont_R_profs})
shows the density profiles corresponding to the solutions labeled
0 to 4 in Fig. (\ref{fig:Cont_R}). Notice that although the continuation
process started in the multi-bump profiles shown in Figs. (\ref{fig:Ex_states})
and (\ref{fig:Cont_ex_states}), these solutions do not have this
multi-bump characteristic. Also, the blue branch is not the one with
the smallest $\mu$ for large values of $R$. In any case, for large
values of $R$, none of these radially symmetric solutions is linearly
stable. It is interesting to observe that even though the radially
symmetric solutions remain unchanged for large values of $R$, the
stable vortex lattice solutions change as $R$ increases, as seen
in Fig. (\ref{fig:Sim_diff_rads}). 

\begin{figure}[H]
\begin{centering}
\begin{tabular}{cc}
\includegraphics[scale=0.4]{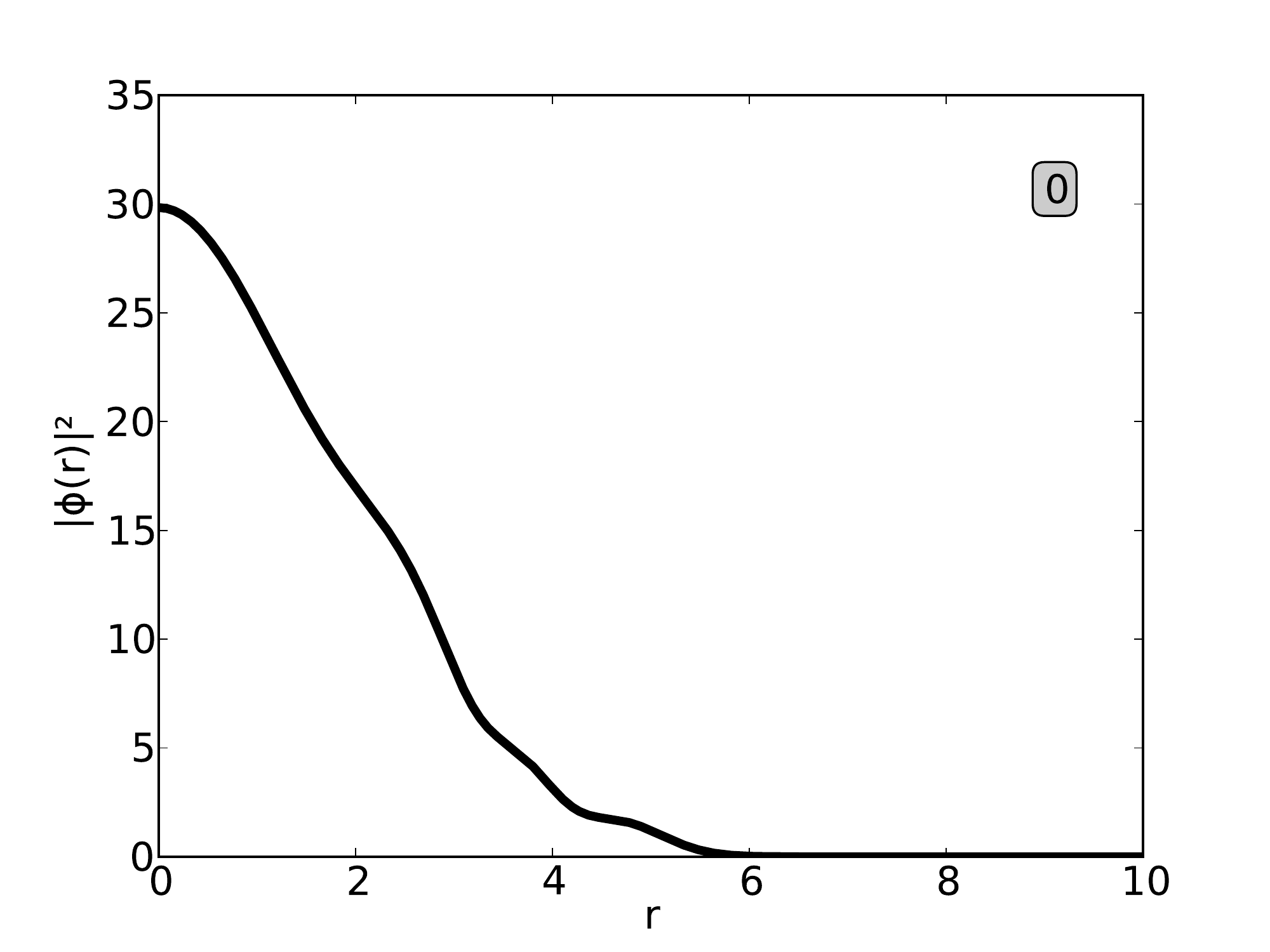} & \includegraphics[scale=0.4]{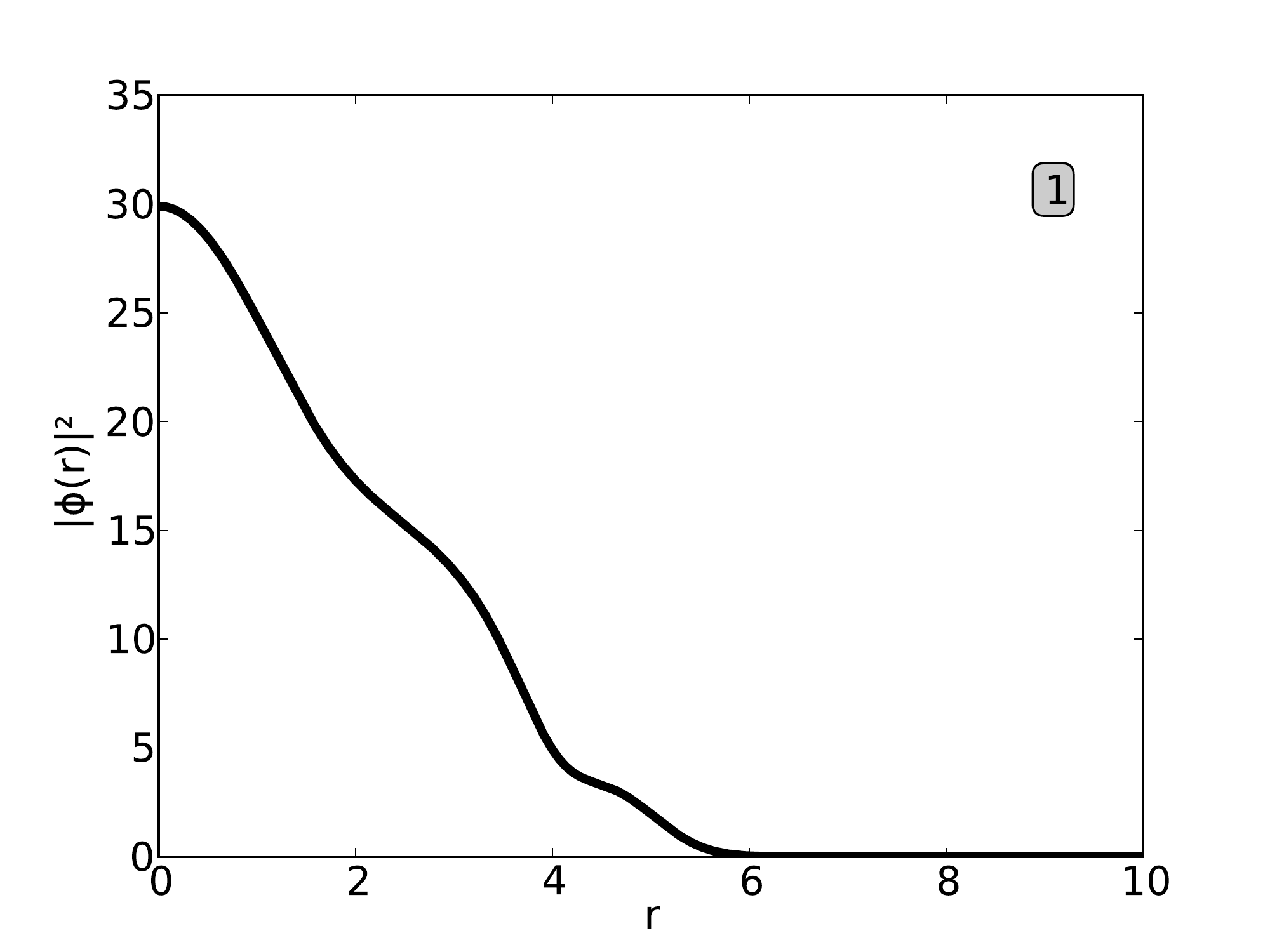}\tabularnewline
\includegraphics[scale=0.4]{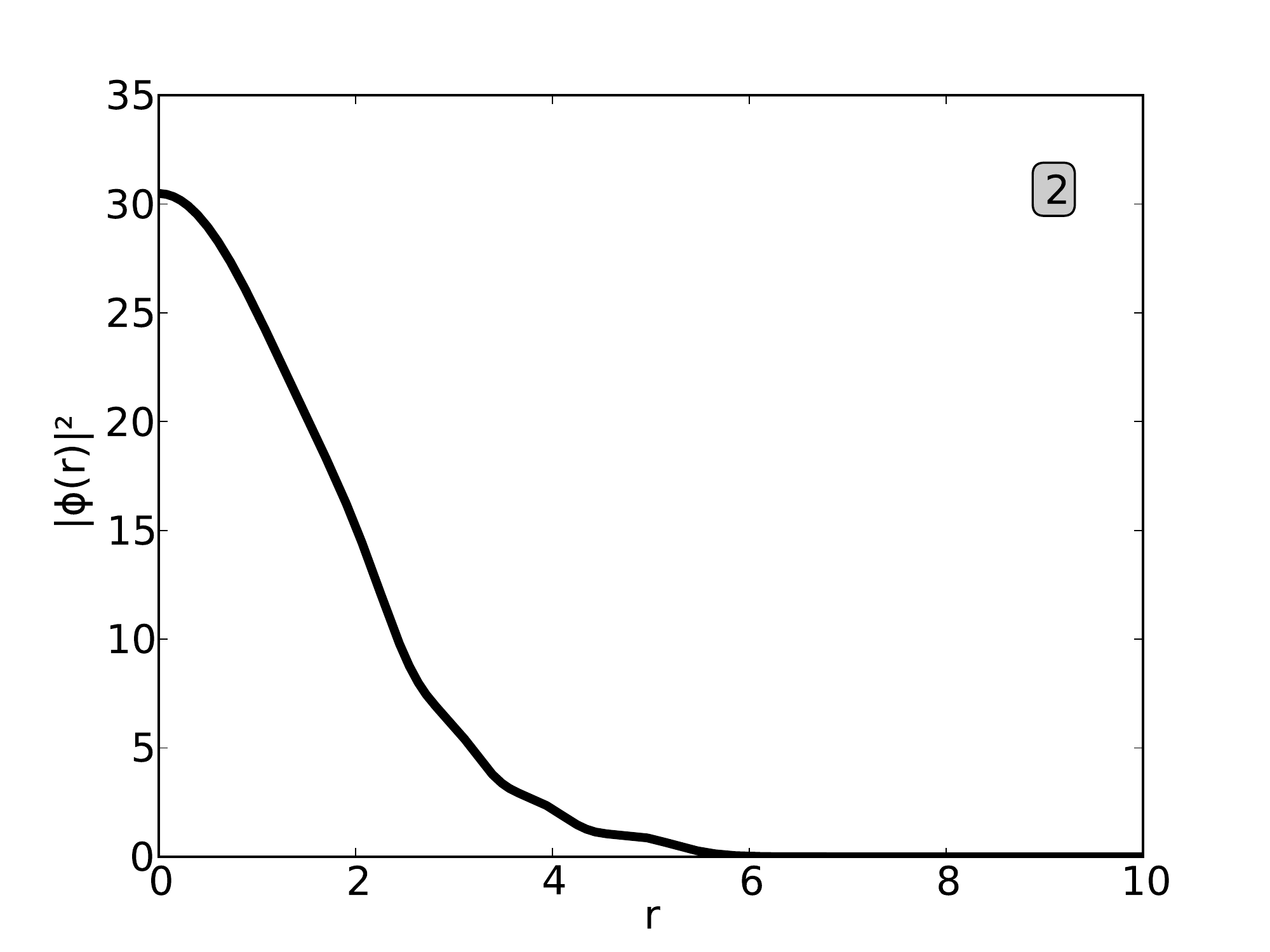} & \includegraphics[scale=0.4]{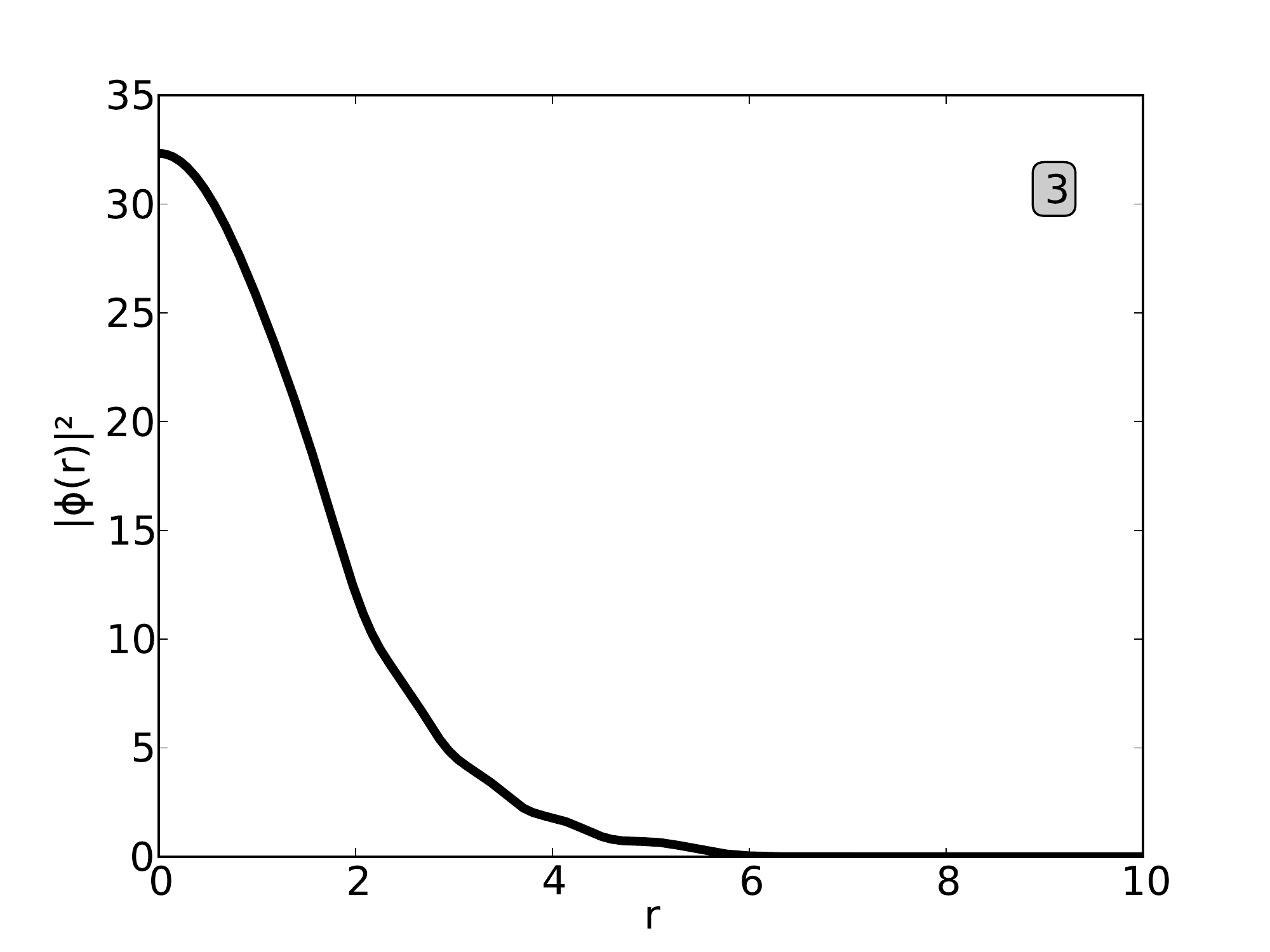}\tabularnewline
\multicolumn{2}{c}{\includegraphics[scale=0.4]{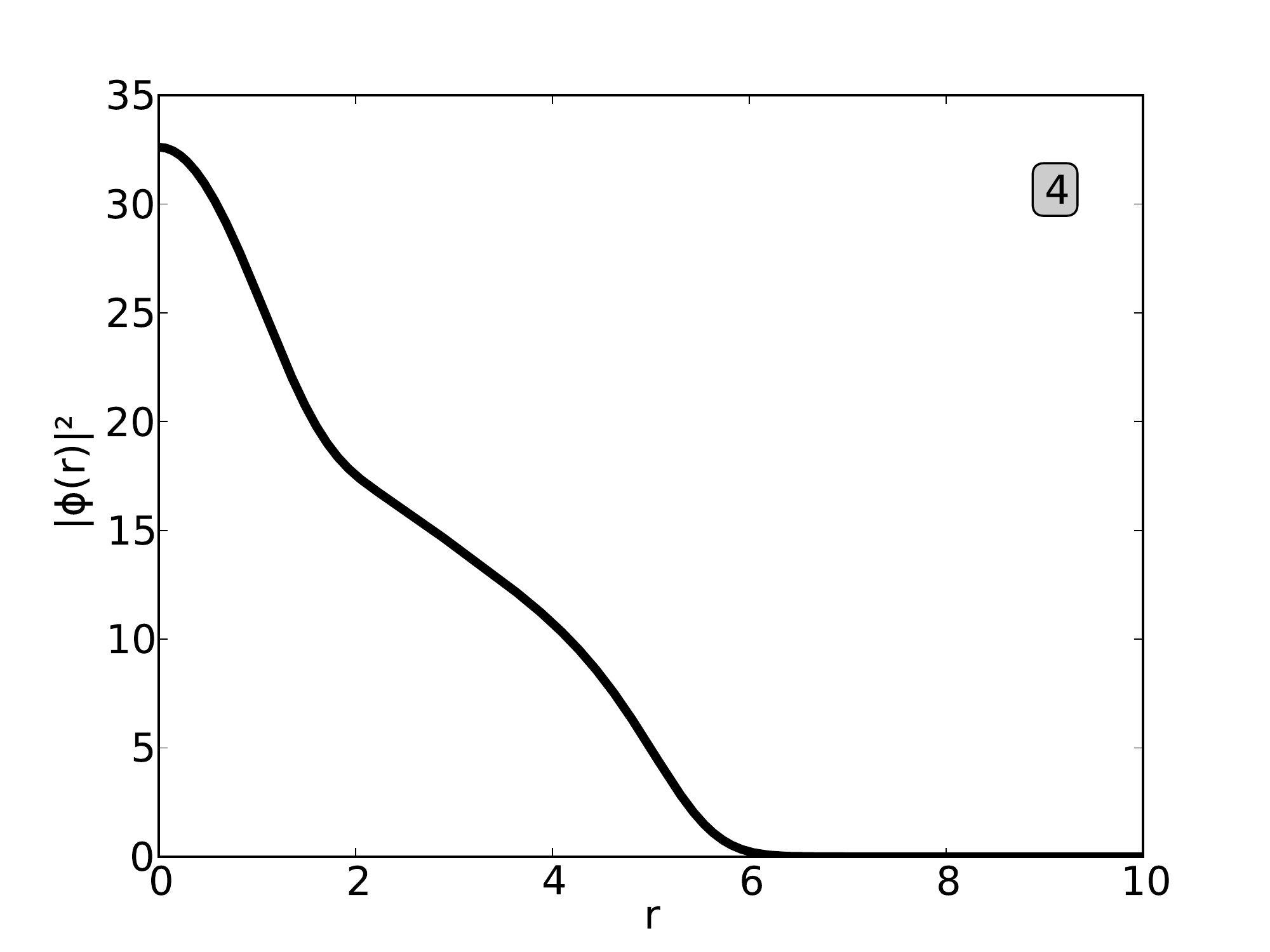}}\tabularnewline
\end{tabular}
\par\end{centering}

\caption{Density profiles corresponding to the solutions labeled 0 to 4 in
Fig. \ref{fig:Cont_R}.\label{fig:Cont_R_profs}}

\end{figure}

\subsection{Central vortex states}

Let us now investigate the existence of central vortex states similar
to the ones observed in a rotating BEC \citep{bao2005ground}. To
find these states, consider solutions of (\ref{eq:cGPE}) of the form
$\psi\left(x,t\right)=e^{-i\mu t}\psi_{m}\left(x\right)=e^{-i\mu t}\psi_{m}\left(r\right)e^{im\theta}$,
where $\mu$ is the chemical potential and $m=1,2,3,...$ is called
the winding index. Introducing this ansatz into (\ref{eq:cGPE}) gives
the following expression for $\psi_{m}$:

\begin{equation}
\mu\psi_{m}\left(r\right)=\left[-\frac{1}{r}\frac{d}{dr}\left(r\frac{d}{dr}\right)+\frac{m^{2}}{r^{2}}+V(r)+\left|\psi_{m}\left(r\right)\right|^{2}+i\left(\alpha\Theta_{R}-\sigma\left|\psi_{m}\left(r\right)\right|^{2}\right)\right]\psi_{m}\left(r\right),\label{eq:ODE_Vortex_St}
\end{equation}
with
\[
\begin{array}{ccc}
\psi{}_{m}\left(0\right)=0, &  & \lim_{r\rightarrow\infty}\psi_{m}\left(r\right)=0\end{array}.
\]

This problem is solved by the collocation method described in Section
3. Figure \ref{fig:Central_vortex_states} shows the results for $\alpha=4.4$,
$\sigma=0.3$, $R=5$ and $m=1$ to $8$. To study their linear stability,
small perturbations of the wave function can be expressed as
\begin{equation}
\begin{array}{cc}
\psi\left(r,\theta,t\right)=e^{-i\mu t}\left[\psi_{m}\left(r\right)e^{im\theta}+\varepsilon\left(u(r)e^{i\left(n\theta-\omega t\right)}+v^{*}\left(r\right)e^{i\left(n\theta+\omega^{*}t\right)}\right)\right], & n=1,2,...,\end{array}\label{eq:TrialWaveFun-1}
\end{equation}
with $\varepsilon\rightarrow0$. The perturbation is time dependent
with frequency $\omega$ and the complex amplitude functions $u(r)$
and $v\left(r\right)$. Proceeding as in the linear stability analysis
performed in Section 4 with $n=1,...,50$, it can be shown that all
of the central vortex states displayed in Fig. (\ref{fig:Central_vortex_states})
are linearly unstable. When these central vortex states are used as
initial conditions for the Strang-splitting spectral method, it is
possible to study the manifestation of these instabilities. In particular,
Figs. (\ref{fig:Sim_m1}) and (\ref{fig:Sim_m2}) show the results
for $m=1,2$. The $m=1$ vortex remains for a long time, and then
it undergoes symmetry breaking and gives rise to a vortex lattice.
The $m=2$ vortex has an additional interesting behavior: it remains
for a short time; the vortex then splits into two $m=1$ vortices;
and finally this solution breaks and generates a vortex lattice. The
splitting for the case $m=2$ was observed experimentally in \citep{d2010persistent}
in an exciton-polariton condensate.

\begin{figure}[H]
\begin{centering}
\includegraphics[scale=0.72]{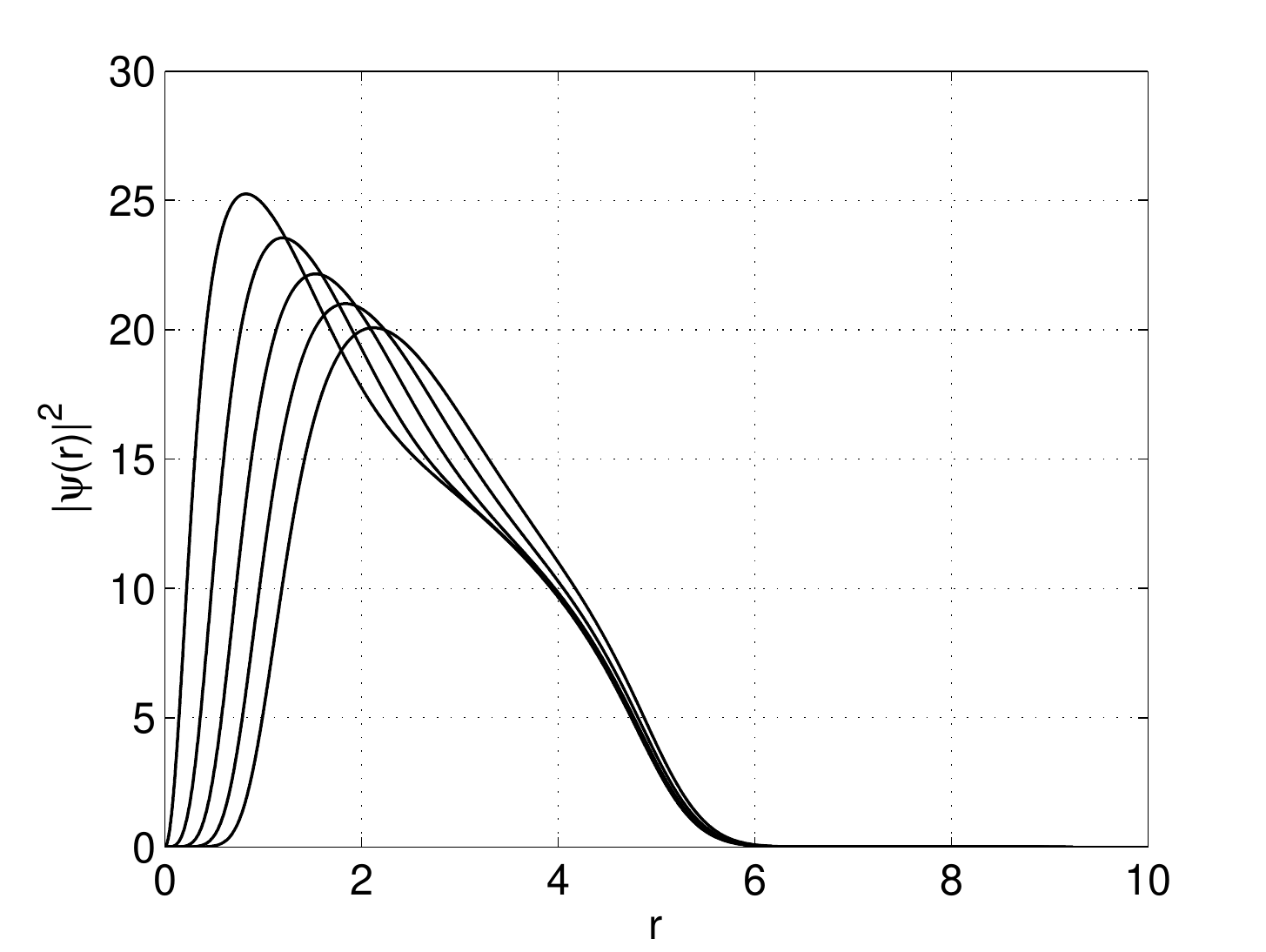}
\par\end{centering}

\caption{Density distributions corresponding to the central vortex states for
$\alpha=4.4$, $\sigma=0.3$, $R=5$ and $m=1$ to 5 (in order of
decreasing peaks).\label{fig:Central_vortex_states}}
 
\end{figure}

\begin{figure}[H]
\begin{centering}
\begin{tabular}{cc}
\includegraphics[width=55mm,height=48mm]{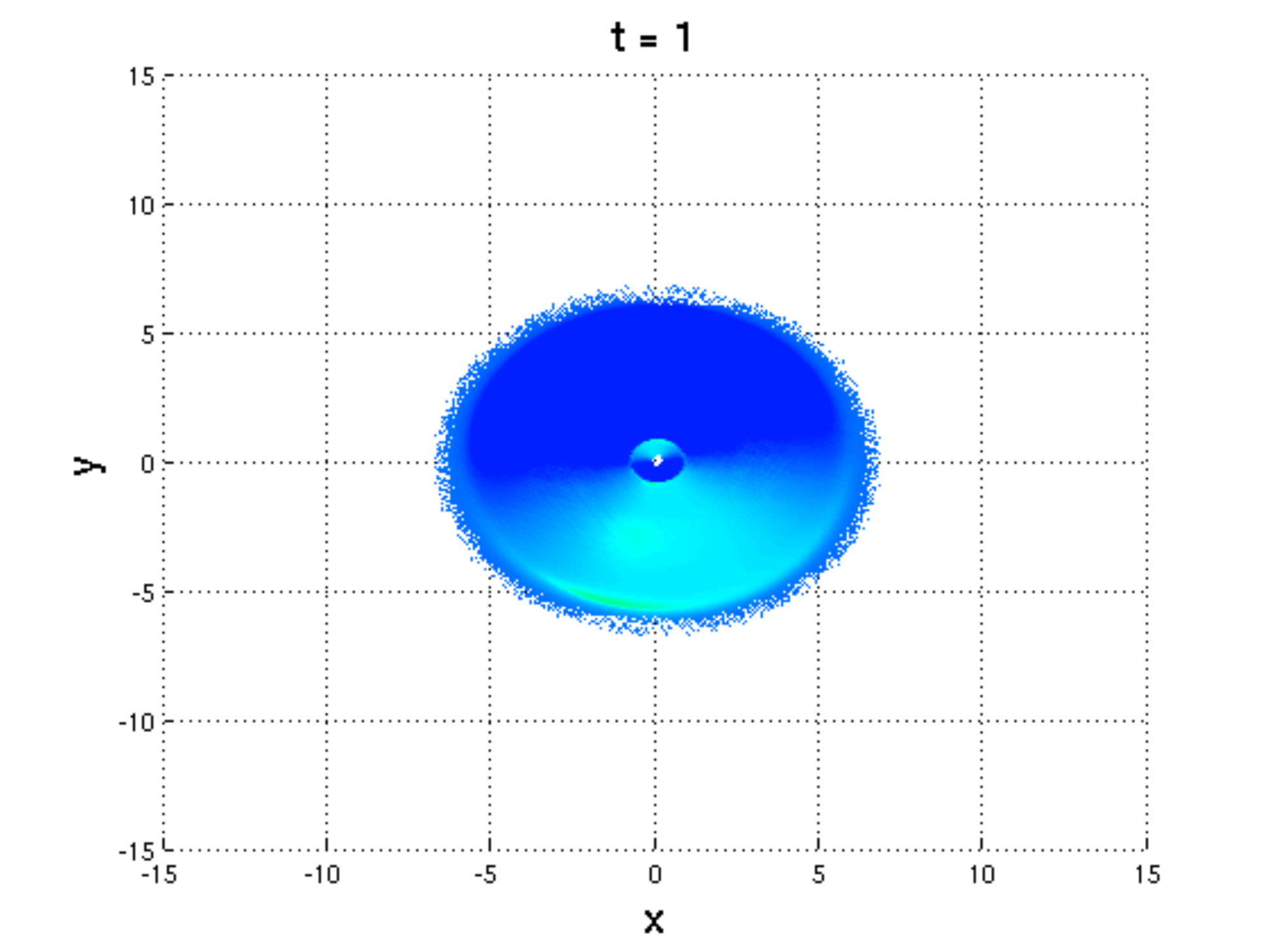} & \includegraphics[width=55mm,height=48mm]{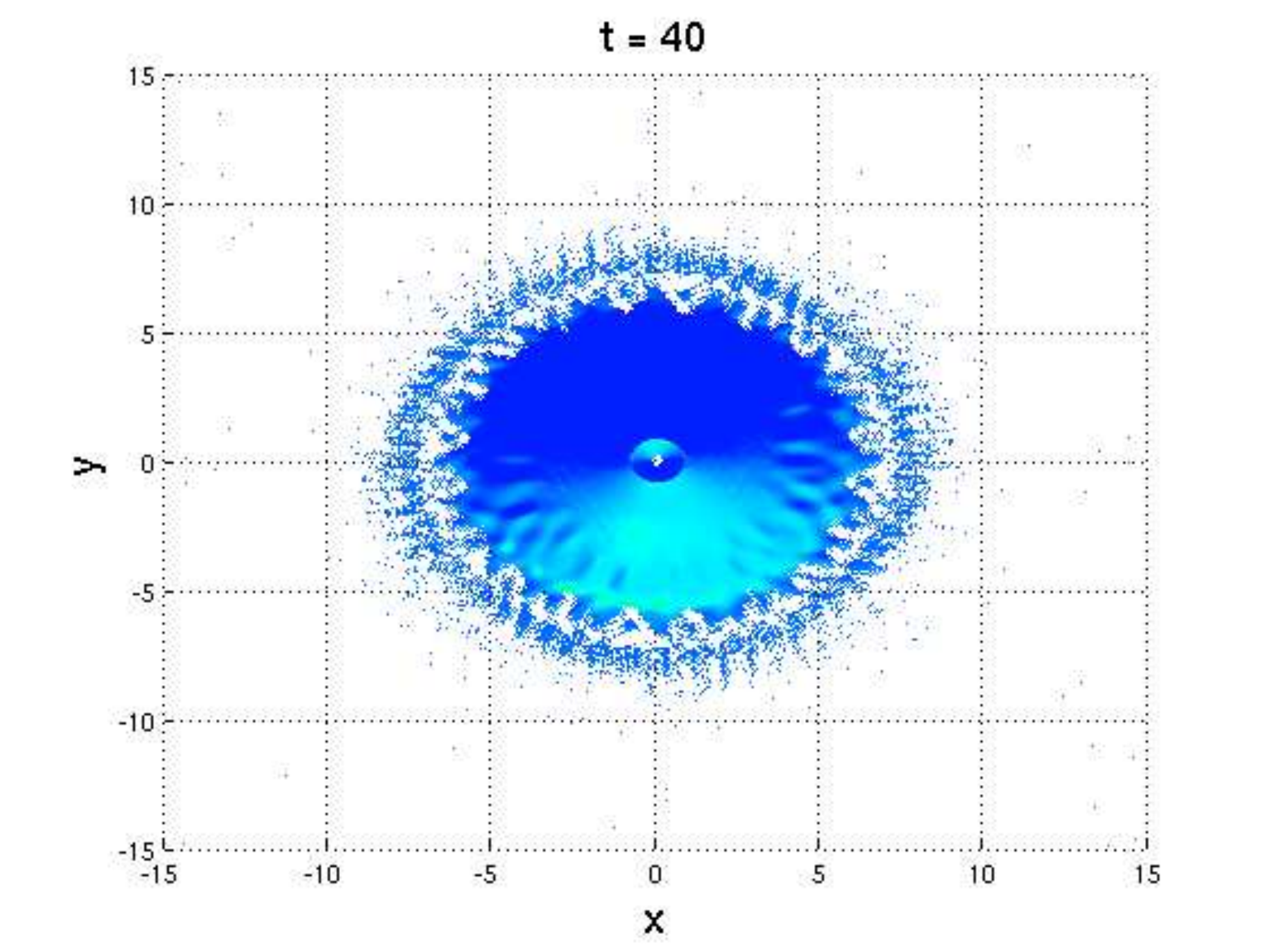}\tabularnewline
\includegraphics[width=55mm,height=48mm]{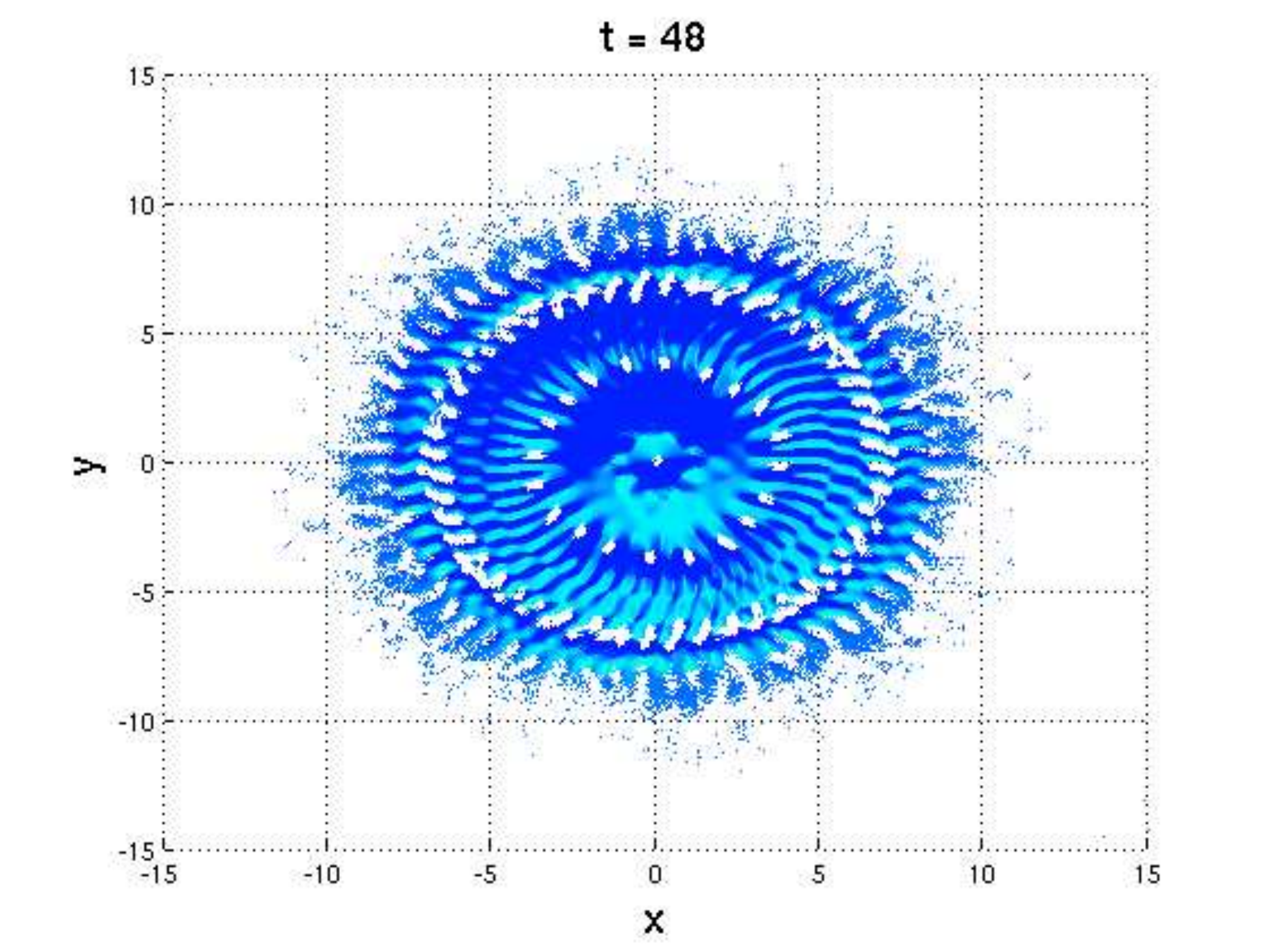} & \includegraphics[width=55mm,height=48mm]{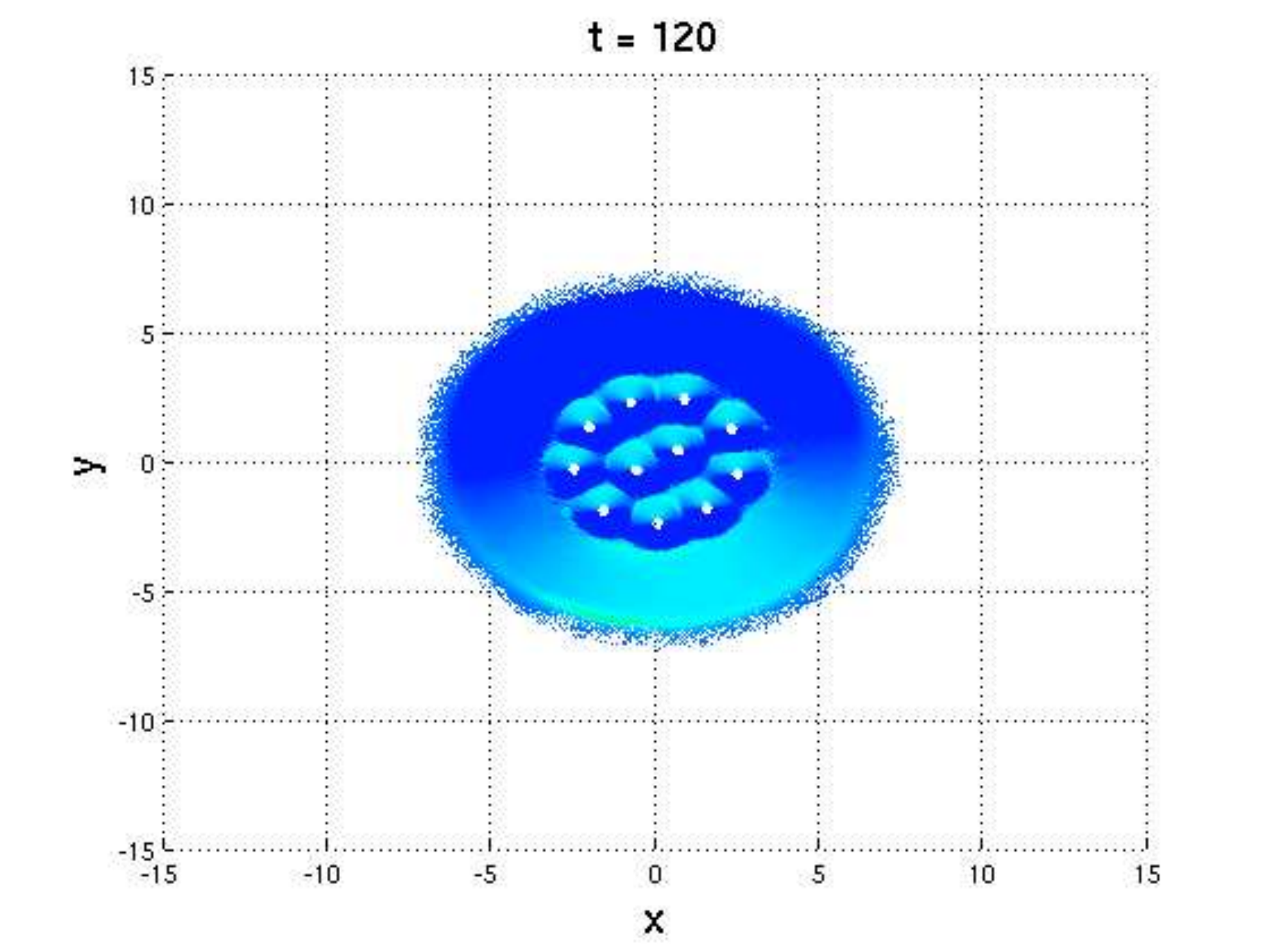}\tabularnewline
\end{tabular}
\par\end{centering}

\caption{Simulation result for the $m=1$ central vortex state (see also supplementary
video Sim3.avi).\label{fig:Sim_m1}}

\end{figure}

\begin{figure}[H]
\begin{centering}
\begin{tabular}{cc}
\includegraphics[width=55mm,height=48mm]{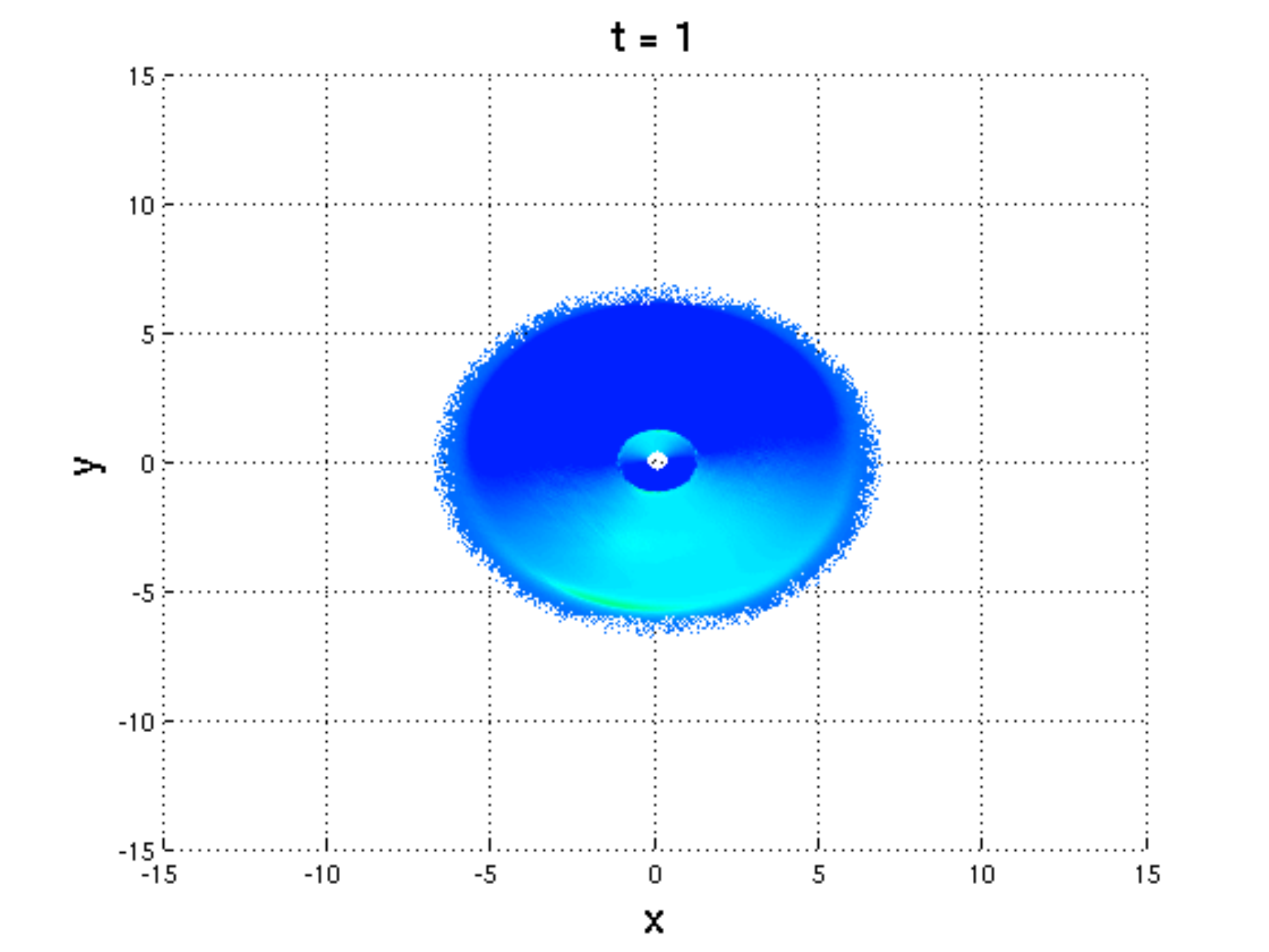} & \includegraphics[width=55mm,height=48mm]{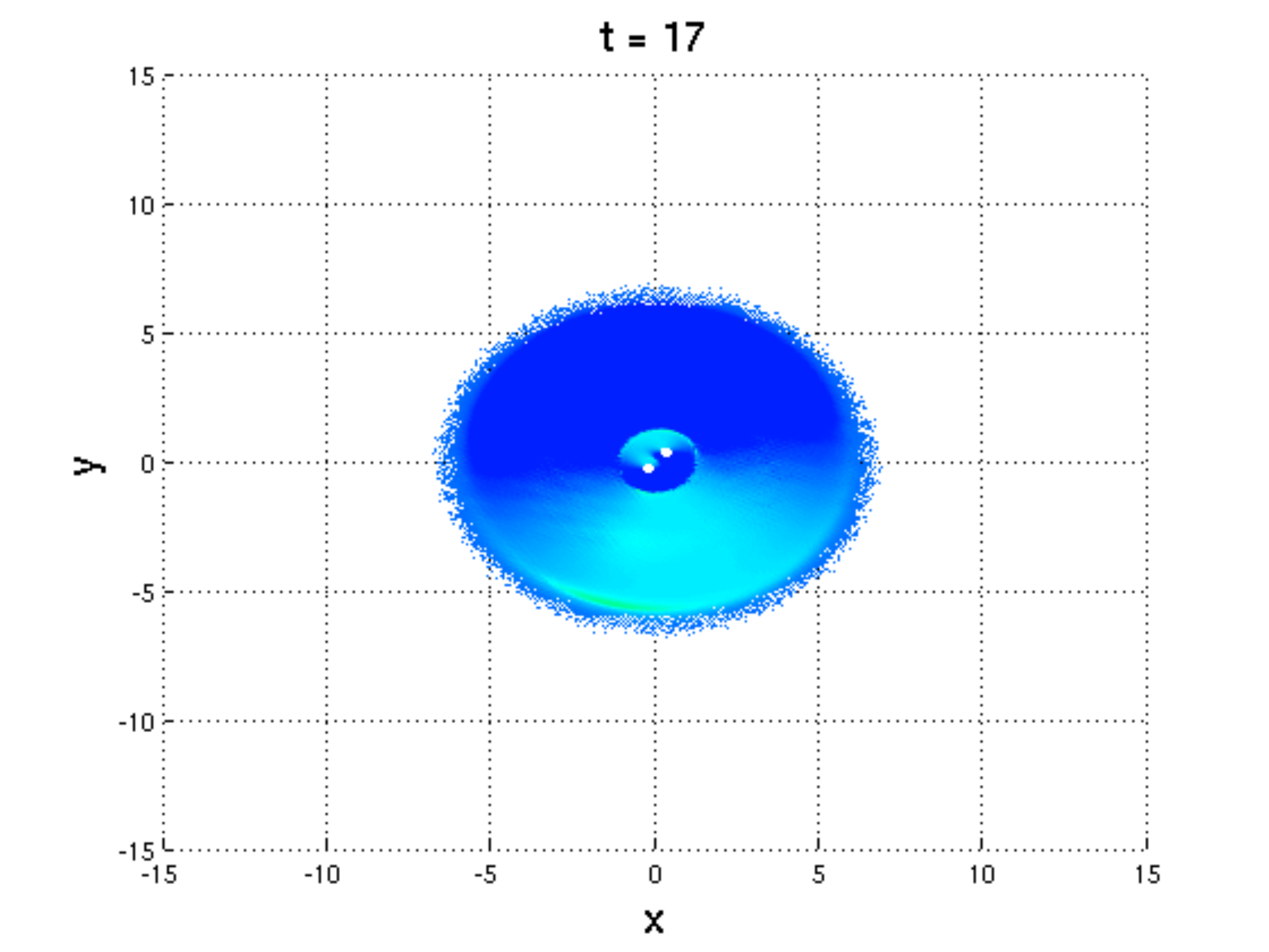}\tabularnewline
\includegraphics[width=55mm,height=48mm]{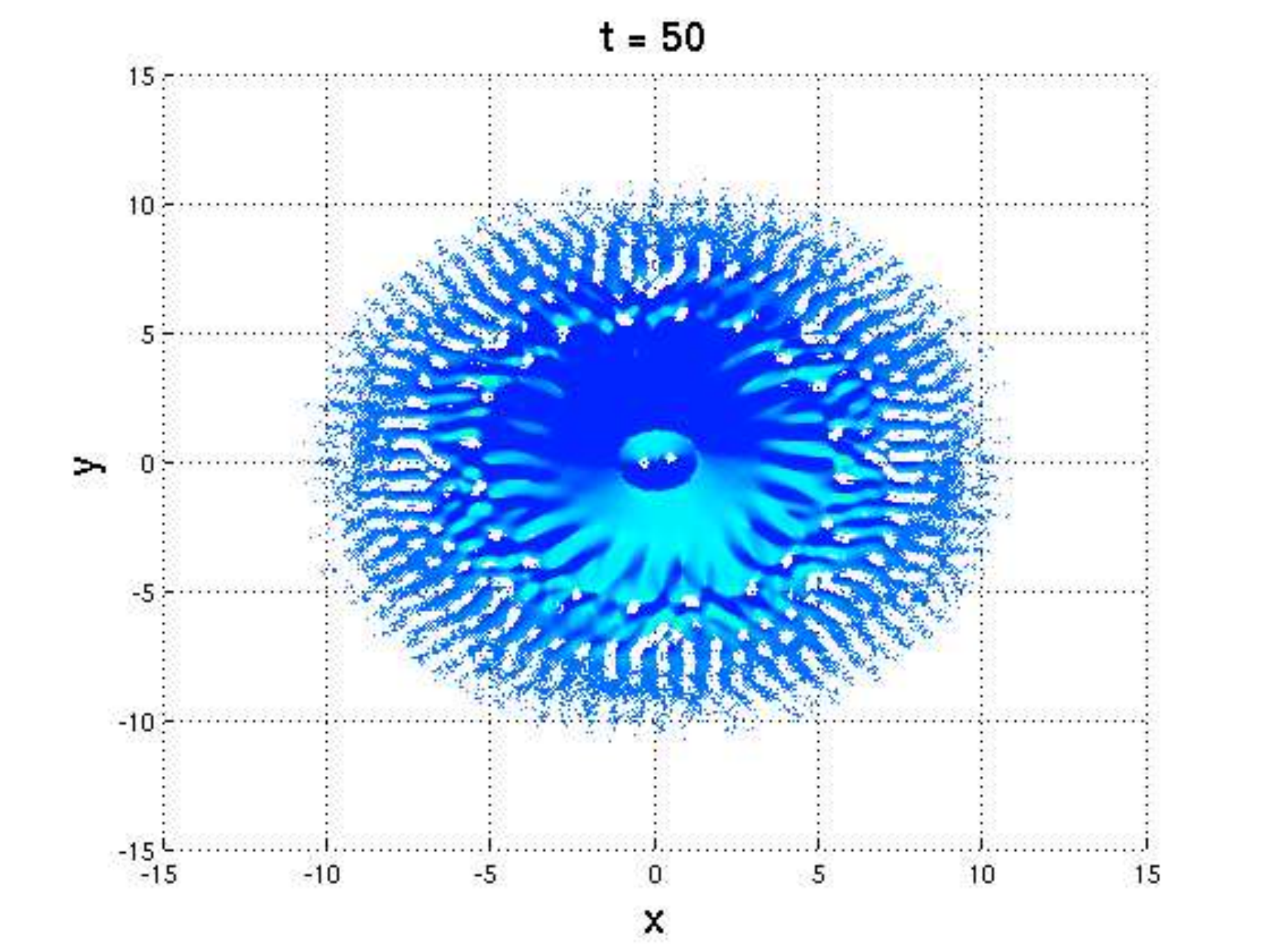} & \includegraphics[width=55mm,height=48mm]{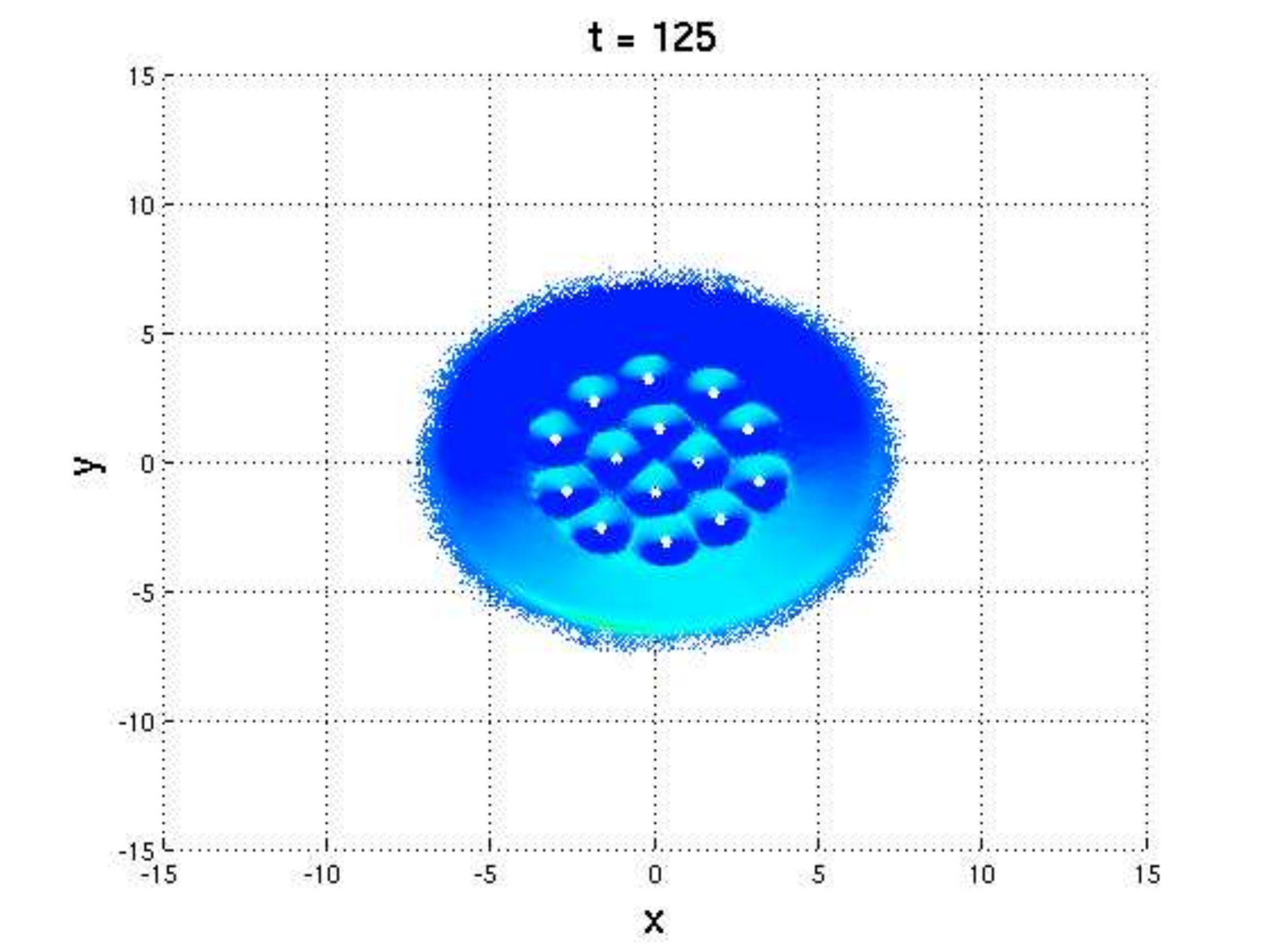}\tabularnewline
\end{tabular}
\par\end{centering}

\caption{Simulation result for the $m=2$ central vortex state (see also supplementary
video Sim4.avi).\label{fig:Sim_m2}}
\end{figure}

\section{Conclusions}

The general behavior of the 2D complex GP equation was presented using
different numerical techniques. The first step for the numerical characterization
of this equation was the analysis of its radially symmetric solutions.
Due to the lack of conservation of mass and energy, the standard techniques
used for the study of the radially symmetric ground state solution
of the GP equation, such as the gradient flow method, do not apply
in this case. In order to overcome this problem, the stationary complex
GP equation (\ref{eq:st_cGPE}) was studied directly using a collocation
method. Once the radially symmetric solutions were obtained, their
linear stability was investigated. Afterwards, numerical continuation
was applied to one of the linearly stable solutions to study its excited
states. We found that the linearly stable solution has the smallest
chemical potential in the group of solutions found, which leads us
to define it as the ground state of the system and the rest of the
solutions as excited states. However, a rigorous definition of the
energy is required to corroborate this statement.

We used numerical integration to study the nonlinear evolution of
the linearly unstable solutions. We observed the emergence of complicated
vortex lattices after symmetry breaking (this fact was also shown
by \citet{keeling2008spontaneous}). These lattices remain rotating
with a constant angular velocity, becoming the stable solution of
the system. 

Finally, we computed the central vortex solutions of the complex GP
equation. An interesting result was the splitting of the $m=2$ central
vortex into two $m=1$ vortices, which was observed experimentally
by \citet{d2010persistent} in a Bose-Einstein condensation of exciton-polaritons.
From all this, it is possible to see the rich behavior of the complex
GP equation, which in some way combines the characteristics of both
the GP and the complex Ginzburg-Landau equations.

\section{Acknowledgments}

The first author acknowledges the assistance and comments from D.
Ketcheson, P. Antonelli, N. Berloff, B. Sandstede, B. Oldeman and
the Research Computing Group from KAUST. The work of the last author
has been supported by the Hertha-Firnberg Program of the FWF, grant
T402-N13.

\section{References}

\bibliographystyle{plainnat}
\bibliography{bib}

\end{document}